\documentclass[11pt,twoside, leqno]{article}

\usepackage{amssymb}
\usepackage{amsmath}
\usepackage{mathrsfs}
\usepackage{amsthm}
\usepackage{txfonts}
\usepackage{color}

\usepackage{indentfirst}

\allowdisplaybreaks

\pagestyle{myheadings}\markboth{\footnotesize\rm\sc
Sibei Yang and Zhenyu Yang}
{\footnotesize\rm\sc A Two-Weight Boundedness Criterion and Its Applications}

\textwidth=15cm
\textheight=21.12cm
\oddsidemargin 0.46cm
\evensidemargin 0.46cm

\parindent=13pt

\def\ls{\lesssim}

\def\fz{\infty}

\renewcommand{\r}{\right}
\newcommand{\lf}{\left}

\def\ls{\lesssim}

\def\supp{{\mathop\mathrm{\,supp\,}}}

\def\rr{{\mathbb R}}

\def\rn{{{\rr}^n}}

\def\nn{{\mathbb N}}

\newcommand{\wz}{\widetilde}

\newcommand{\cm}{{\mathcal M}}

\newcommand{\cp}{{\mathcal P}}

\newcommand{\cs}{{\mathcal S}}

\def\az{\alpha}
\def\lz{\lambda}

\def\dz{\delta}

\def\bz{\beta}

\def\gz{{\gamma}}

\def\tz{\theta}

\def\wz{\widetilde}

\def\ls{\lesssim}

\def\boz{\Omega}

\def\uc{{\varepsilon}}

\def\esup{\mathop\mathrm{\,ess\,sup\,}}

\def\divz{{{\mathop\mathrm {div}}}}

\newtheorem{theorem}{Theorem}[section]
\newtheorem{lemma}[theorem]{Lemma}
\newtheorem{corollary}[theorem]{Corollary}

\theoremstyle{definition}
\newtheorem{remark}[theorem]{Remark}
\newtheorem{definition}[theorem]{Definition}
\def\supp{{\mathop\mathrm{\,supp\,}}}

\def\loc{{\mathop\mathrm{loc}}}

\numberwithin{equation}{section}

\begin{document}

\title{\Large\bf A Two-Weight Boundedness Criterion and Its Applications
\footnotetext{\hspace{-0.35cm} 2020 {\it Mathematics Subject
Classification}. {Primary 47A30; Secondary 42B20, 42B25, 42B37, 46E30, 42B35.}
\endgraf{\it Key words and phrases}. two-weight inequality, Lorentz space, Morrey space,
good-$\lambda$ inequality, Calder\'on--Zygmund operator,
Littlewood--Paley function, fractional integral, Riesz transform.
\endgraf This work is supported by the National Natural Science Foundation
of China (Grant Nos. 11871254, 12071431 and 11871100).}}
\author{Sibei Yang\,\footnote{Corresponding author,
E-mail: \texttt{yangsb@lzu.edu.cn}/{\color{red} April 26, 2021}/Final version.}\ \ and Zhenyu Yang}
\date{ }
\maketitle

\vspace{-0.8cm}

\begin{center}
\begin{minipage}{13cm}\small
{{\bf Abstract.} In this article, the authors establish a general (two-weight) boundedness criterion for
a pair of functions, $(F,f)$, on $\mathbb{R}^n$ in the scale of weighted Lebesgue spaces, weighted Lorentz spaces,
(Lorentz--)Morrey spaces, and variable Lebesgue spaces.
As applications, the authors give a unified approach to prove the (two-weight) boundedness of
Calder\'on--Zygmund operators, Littlewood--Paley $g$-functions, Lusin area
functions, Littlewood--Paley $g^\ast_\lambda$-functions, and fractional integral operators,
in the aforementioned function spaces. Moreover, via applying the above (two-weight) boundedness criterion,
the authors further obtain the (two-weight) boundedness of
Riesz transforms, Littlewood--Paley $g$-functions, and fractional integral operators associated with
second-order divergence elliptic operators with complex bounded measurable coefficients on $\mathbb{R}^n$
in the aforementioned function spaces.}
\end{minipage}
\end{center}

\vspace{0.2cm}

\section{Introduction\label{s1}}

The study of two-weight norm inequalities for some classical operators
in harmonic analysis is of interest not only for their own sake but also for
their many applications in partial differential equations (see, for instance,
\cite{cmp07,cp02,cp00,cp00b,m79,p95,p94,s88,sw92}).

Motivated by the work in \cite{a07,am07,sh07,sh05a}, in this article,
we establish a general two-weight boundedness criterion for a pair of functions,
$(F,f)$, on $\rn$ in the scale of both weighted Lebesgue spaces and weighted Lorentz spaces.
As applications, we further obtain a boundedness criterion for a pair of functions,
$(F,f)$, on $\rn$ in the scale of (Lorentz--)Morrey spaces and variable Lebesgue spaces.
Applying those (two-weight) boundedness criterion, we prove the (two-weight) boundedness of some classical
operators in harmonic analysis, including Calder\'on--Zygmund operators, Littlewood--Paley $g$-functions, Lusin area
functions, Littlewood--Paley $g^\ast_\lz$-functions, and fractional integral operators,
in the scale of weighted Lebesgue spaces, weighted Lorentz spaces, (Lorentz--)Morrey spaces,
and variable Lebesgue spaces. Although some of these boundedness are known, the proofs presented in this
article are quite different from those used in the existing literatures.
Moreover, using the general (two-weight) boundedness criterion given in this article,
we obtain the (two-weight) boundedness of Riesz transforms, Littlewood--Paley $g$-functions,
and fractional integral operators associated with second-order divergence elliptic operators with complex
bounded measurable coefficients on $\rn$ in the scale of weighted Lebesgue spaces,
weighted Lorentz spaces, (Lorentz--)Morrey spaces, and variable Lebesgue spaces.
It is worth pointing out that the boundedness of Riesz transforms, Littlewood--Paley $g$-functions,
and fractional integral operators associated with second-order divergence elliptic operators with complex
bounded measurable coefficients on $\rn$ in the scale of weighted Lebesgue spaces
obtained in this article extends the one-weight boundedness for the corresponding operators
established in \cite{am08,am06} to the two-weight case.

To describe some backgrounds and the main results of this article,
we first recall the definitions of weights, the Muckenhoupt weight class,
and the reverse H\"older class (see, for instance, \cite{cmp11,gr85,g14,St93}).

Let $E$ be a measurable subset in $\rn$ and $\omega$ be a weight function on $\rn$. Denote by $L^1_{\loc}(E)$ the
\emph{set of all locally integrable functions on $E$}. Furthermore, for any $f\in L^1_\loc(E)$,
we denote the integral $\int_E|f(x)|\omega(x)\,dx$ simply by
$\int_E|f|\omega\,dx$ and, when $|E|\in(0,\fz)$, we often use the following notation
$$\fint_E f\,dx:=\frac{1}{|E|}\int_Ef(x)\,dx.$$

\begin{definition}\label{d1.1}
\begin{itemize}
\item[\rm(i)] A measurable function $\omega$ on $\rn$ is called a \emph{weight} if $\omega$ is non-negative and locally integrable on $\rn$.
\item[\rm(ii)] Let $q\in[1,\fz)$. A weight $\omega$ on $\rn$ is said to belong to the \emph{Muckenhoupt weight class} $A_q(\rn)$, denoted by $\omega\in A_q(\rn)$, if, when $q\in(1,\fz)$,
\begin{equation*}
[\omega]_{A_q(\rn)}:=\sup_{B\subset\rn}\lf(\fint_B \omega\,dx\r)\lf(\fint_B
\omega^{-\frac{1}{q-1}}\,dx\r)^{q-1}<\fz
\end{equation*}
and, when $q=1$,
\begin{equation*}
[\omega]_{A_1(\rn)}:=\sup_{B\subset\rn}\lf(\fint_B \omega\,dx\r)
\lf\{\esup_{y\in B}[\omega(y)]^{-1}\r\}<\fz,
\end{equation*}
where the suprema are taken over all balls $B$ of $\rn$. Moreover, define  $$A_\fz(\rn):=\bigcup_{p\in[1,\fz)}A_p(\rn).$$

\item[\rm(iii)] Assume that $s\in(1,\fz]$. A weight $\omega$ on $\rn$
is said to belong to the \emph{reverse H\"older class} $RH_s(\rn)$,
denoted by $\omega\in RH_s(\rn)$, if, when $s\in(1,\fz)$,
\begin{align*}
[\omega]_{RH_s(\rn)}:=\sup_{B\subset\rn}\lf(\fint_B \omega^s\,dx\r)^{\frac1{s}}\lf(\fint_B\omega\,dx\r)^{-1}<\fz
\end{align*}
and, when $s=\fz$,
\begin{equation*}
[\omega]_{RH_\fz(\rn)}:=\sup_{B\subset\rn}\lf[\esup_{y\in
B}\omega(y)\r]\lf(\fint_B\omega\,dx\r)^{-1} <\fz,
\end{equation*}
where the suprema are taken over all balls $B$ of $\rn$.
\end{itemize}
\end{definition}

For the Muckenhoupt weight class and the reverse H\"older class, it is well known that
$$
A_\fz(\rn)=\bigcup_{p\in[1,\fz)}A_p(\rn)=\bigcup_{s\in(1,\fz]}RH_s(\rn)
$$
(see, for instance, \cite[Chapter 7]{g14}).

\begin{definition}\label{d1.2}
Let $\gz\in[0,1)$. Then the \emph{fractional Hardy--Littlewood maximal operator}
$\cm_\gz$ on $\rn$ is defined by setting, for any $f\in L^1_{\loc}(\rn)$ and $x\in\rn$,
$$\cm_\gz(f)(x):=\sup_{B\ni x}\lf(|B|^{\gz}\fint_{B}|f|\,dy\r),$$
where the supremum is taken over all balls $B$ of $\rn$ containing $x$.

In particular, when $\gz=0$, the fractional Hardy--Littlewood maximal operator
$\cm_\gz$ is just the well-known \emph{Hardy--Littlewood maximal operator}; in this case,
we denote $\cm_\gz$ simply by $\cm$.
\end{definition}

Assume that $p\in(0,\fz)$ and $\omega$ is a weight on $\rn$.
Recall that the \emph{weighted Lebesgue space} $L^p_\omega(\rn)$ is defined by setting
\begin{align*}
L^p_\omega(\rn):=\lf\{f\ \text{is measurable on}\ \rn: \
\|f\|_{L^p_\omega(\rn)}:=\lf[\int_{\rn}
|f|^p\omega\,dx\r]^{\frac1p}<\fz\r\}.
\end{align*}
In particular, when $\omega\equiv1$, the weighted space $L^p_\omega(\rn)$
is denoted simply by $L^p(\rn)$, which is just the classical Lebesgue space.

In what follows, for any given $q\in[1,\fz]$, we denote by $q'$
its \emph{conjugate exponent}, namely, $1/q+1/q'= 1$. Let $\omega$ be a weight on $\rn$
and $E$ a measurable subset of $\rn$.
Denote the integral $\int_E\omega(x)\,dx$ simply by $\omega(E)$.

Muckenhoupt \cite{m72} proved that, for any given $p\in(1,\fz)$,
the Hardy--Littlewood maximal operator $\cm$ is bounded on the weighted Lebesgue space
$L^p_\omega(\rn)$ if and only if $\omega\in A_p(\rn)$. However, in the two-weight case, for any
given $p\in(1,\fz)$, the analogous two-weight $A_p(\rn)$ condition,
\begin{equation}\label{1.1}
\lf[\omega,v^{1-p'}\r]_{A_p(\rn)}:=\sup_{B\subset\rn}\lf(\fint_B \omega\,dx\r)
\lf(\fint_B v^{1-p'}\,dx\r)^{p-1}<\fz
\end{equation}
with the supremum taken over all balls $B$ of $\rn$,
is necessary but not sufficient for $\cm$ to be bounded from $L^p_v(\rn)$ to $L^p_\omega(\rn)$
(see, for instance, \cite{gr85,m72}). Meanwhile, Muckenhoupt \cite{m72} showed that, for any given $p\in(1,\fz)$,
the weak type inequality
$$\sup_{\lz\in(0,\fz)}\lf\{\lz\lf[\int_{\{x\in\rn:\ \cm(f)(x)>\lz\}}\omega\,dx\r]^{\frac{1}{p}}\r\}
 \le C\|f\|_{L^p_v(\rn)}
$$
with $C$ being a positive constant independent of $f$, holds true for any $f\in L^p_v(\rn)$
if and only if $[\omega,v^{1-p'}]_{A_p(\rn)}<\fz$.
Moreover, Sawyer \cite{s82} proved that, for any given $p\in(1,\fz)$,
$\cm$ is bounded from $L^p_v(\rn)$ to $L^p_\omega(\rn)$ if and only if $[\omega,v^{1-p'}]_{S_p(\rn)}<\fz$,
where
$$\lf[\omega,v^{1-p'}\r]_{S_p(\rn)}:=\sup_{B\subset\rn}\lf[\frac{1}{v^{1-p'}(B)}\int_B
\lf[\cm\lf(v^{1-p'}\mathbf{1}_B\r)\r]^p\omega\,dx\r]^{\frac1p}
$$
with the supremum taken over all balls $B$ of $\rn$. Here and thereafter, $\mathbf{1}_B$
denotes the \emph{characteristic function} of $B$. It is known that, if $[\omega,v^{1-p'}]_{S_p(\rn)}<\fz$, then
$[\omega,v^{1-p'}]_{A_p(\rn)}\le[\omega,v^{1-p'}]_{S_p(\rn)}^p<\fz$
(see, for instance, \cite{s82}). Conversely, if $[\omega,v^{1-p'}]_{A_p(\rn)}<\fz$ and $v$ satisfies
$v^{1-p'}\in A_\fz(\rn)$, then $[\omega,v^{1-p'}]_{S_p(\rn)}<\fz$ (see, for instance, \cite[Corollary 1.3]{p95} and \cite[Corollary 1.4]{pr15}).
A defect of the condition $[\omega,v^{1-p'}]_{S_p(\rn)}<\fz$ is that it involves the maximal operator $\cm$ itself
which hence motivates a search for sufficient conditions that are simpler and similar to
the two-weight $A_p(\rn)$ condition \eqref{1.1}. For this question, Neugebauer \cite{n83}
showed that, for any given $p\in(1,\fz)$, if the weights $\omega$ and $v$ satisfy $[\omega^r,v^{(1-p')r}]_{A_p(\rn)}<\fz$
for some $r\in(1,\fz)$ (which is called a \emph{``power bump" condition}),
then $\cm$ is bounded from $L^p_v(\rn)$ to $L^p_\omega(\rn)$. The result of Neugebauer \cite{n83}
was improved by P\'erez \cite{p95} via proving that the ``power bump" condition can be replaced
by a weak ``Orlicz bump" condition. To describe the result of P\'erez \cite{p95}, we first
recall some necessary definitions and notation.

\begin{definition}\label{d1.3}
\begin{itemize}
\item[\rm(i)] A function $\Phi:\ [0,\fz)\to[0,\fz)$ is called a \emph{Young function} if $\Phi$ is continuous, convex,
strictly increasing, $\Phi(0)=0$, and $\frac{\Phi(t)}{t}\to\fz$ as $t\to\fz$. Moreover, it is said that a Young function
$\Phi$ is \emph{doubling} if there exists a positive constant $C$ such that, for any $t\in[0,\fz)$, $\Phi(2t)\le C\Phi(t)$.

\item[\rm(ii)] Let $\Phi$ be a Young function and $B$ a ball in $\rn$. For any $f\in L^1_{\loc}(\rn)$,
the \emph{normalized Luxembourg norm} $\|f\|_{\Phi,\,B}$ of $f$ on $B$ is defined by setting
$$\|f\|_{\Phi,\,B}:=\inf\lf\{\lz\in(0,\fz):\ \fint_B\Phi\lf(\frac{|f(x)|}{\lz}\r)\,dx\le1\right\}.
$$
\end{itemize}
\end{definition}

\begin{remark}\label{r1.1}
Let $p\in(1,\fz)$, $B$ be a ball in $\rn$, and $\Phi(t):=t^p$ for any $t\in[0,\fz)$. Then
$\Phi$ is a Young function and, for any $f\in L^1_\loc(\rn)$,
$$\|f\|_{\Phi,\,B}=\lf(\fint_B|f|^p\,dx\r)^{\frac1p}=:\|f\|_{p,\,B}.
$$
\end{remark}

P\'erez \cite{p95} showed that, for any given $p\in(1,\fz)$, if $\Phi$ is a doubling Young function satisfying
\begin{equation}\label{1.2}
\int_c^\fz\lf[\frac{t^{p'}}{\Phi(t)}\r]^{p-1}\frac{dt}{t}<\fz
\end{equation}
for some constant $c\in(0,\fz)$, and the weights $\omega$ and $v$ satisfy
\begin{equation}\label{1.3}
\sup_{B\subset\rn}\lf(\fint_B \omega\,dx\r)\lf\|v^{-\frac1p}\r\|_{\Phi,\,B}^p<\fz
\end{equation}
with the supremum taken over all balls $B$ of $\rn$,
then $\cm$ is bounded from $L^p_v(\rn)$ to $L^p_\omega(\rn)$. Meanwhile, observe
that the condition \eqref{1.3} is sharp in the sense that, if \eqref{1.3} implies the
boundedness of $\cm$ from $L^p_v(\rn)$ to $L^p_\omega(\rn)$, then \eqref{1.2} holds true
(see, for instance, \cite{cmp11,cp02,p95}).
A typical example of $\Phi$ satisfying \eqref{1.2} is that $\Phi(t):=t^{s_0p'}$, with some $s_0\in(1,\fz)$,
for any $t\in[0,\fz)$. Two important examples of $\Phi$ satisfying \eqref{1.2} are as follows:
for any given $\dz\in(0,\fz)$ and for any $t\in[0,\fz)$,
$$\Phi(t):=t^{p'}[\log(e+t)]^{p'-1+\dz}
$$
and
$$\Phi(t):=t^{p'}[\log(e+t)]^{p'-1}\lf[\log\log\lf(e^e+t\r)\r]^{p'-1+\dz}$$
(see, for instance, \cite{cmp11,cmp07,crv14} for more details).
Moreover, it is worth pointing out that the two-weight boundedness of
some classical operators in harmonic analysis, including fractional maximal operators,
fractional integral operators, Calder\'on--Zygmund operators, and Littlewood--Paley square functions,
has been extensively studied in the last 30 years (see, for instance, \cite{co21,cmp11,cmp07,crv14,cp00,ll14,mw76,r98b,p94,sw92}
and the references therein).

Furthermore, to study the weighted norm inequalities of non-standard Calder\'on--Zygmund
operators in the scale of weighted Lebesgue spaces and the regularity of the solution
of second-order elliptic boundary value problems in non-smooth domains, Auscher \cite{a07},
Auscher and Martell \cite{am07}, and Shen \cite{sh05a} (see also \cite{sh20,sh18,sh07}) developed a (weighted) $L^p$-boundedness
criterion, which is inspired by the work of Caffarelli and Peral \cite{cp98}.

Motivated by the (weighted) $L^p$-boundedness criterion established in \cite{a07,am07,sh07,sh05a},
we obtain a general (two-weight) boundedness criterion for a pair of functions,
$(F,f)$, on $\rn$ in the scale of weighted Lebesgue spaces, weighted Lorentz spaces,
(Lorentz--)Morrey spaces, and variable Lebesgue spaces.

Now, we state the main results of this article as follows.

\begin{theorem}\label{t1.1}
Let $\gz\in[0,1)$, $p_1,\,p_2,\,p_3\in(0,\fz]$ satisfy $p_3>\max\{p_1,\,p_2\}$, and $F,\,f\in L^{1}_{\loc}(\rn)$.
Assume that, for any ball $B$ of $\rn$, there exist two measurable functions $F_B$ and $R_B$ on $B$
such that $|F|\le|F_B|+|R_B|$ on $B$, and
\begin{align}\label{1.4}
\lf(\fint_{B}|R_B|^{p_3}\,dx\r)^{\frac1{p_3}}
\le C_1\lf\{\lf[\cm(|F|^{p_1})(x_1)\r]^{\frac{1}{p_1}}+\lf[\cm_\gz(|f|^{p_2})(x_2)\r]^{\frac{1}{p_2}}\r\}
\end{align}
(with the usual modification made when $p_3=\fz$)
and
\begin{align}\label{1.5}
\lf(\fint_{B}|F_B|^{p_1}\,dx\r)^{\frac1{p_1}}
\le \epsilon{\lf[\cm(|F|^{p_1})(x_1)\r]}^{\frac{1}{p_1}}+C_2\lf[\cm_\gz(|f|^{p_2})(x_2)\r]^{\frac{1}{p_2}}
\end{align}
for any $x_1,\,x_2\in B$,
where $C_1$, $C_2$, and $\epsilon$ are positive constants independent of $F,\,f,\,R_B,\,F_B,$ and $B$.
Assume further that $\omega\in RH_s(\rn)$ with some $s\in(1,\fz]$, and $a\in(1,\frac{p_3}{p_1})$.
Then there exists a positive constant $\bz_0\in[1,\fz)$,
depending only on $C_1,\,C_2,\,n,\,p_1,\,p_2,\,p_3$, $a$, and
$[\omega]_{RH_s(\rn)}$, such that, for any given $\bz\in[\bz_0,\fz)$,
there exist an $\uc_0\in(0,\fz)$ and a $\kappa_0\in(0,1)$, depending only on $C_1,\,C_2,\,n,\,p_1,\,p_2,\,p_3$, $a$,
$[\omega]_{RH_s(\rn)}$, and $\bz$, such that, if $\uc\in[0,\uc_0)$ and $\kappa\in(0,\kappa_0)$, then, for any $\lz\in(0,\fz)$,
\begin{align}\label{1.6}
\omega(E(\bz \lz))\le\bz^{-\frac{(s-1)a}{s}}\omega(E(\lz))+\omega\lf(\lf\{x\in\rn:\ \cm_\gz(|f|^{p_2})(x)>
(\kappa\lz)^{\frac{p_2}{p_1}}\r\}\r),
\end{align}
where, for any given $\lz\in(0,\fz)$,
$$E(\lz):=\lf\{x\in\rn:\ \cm(|F|^{p_1})(x)>\lz\r\}.
$$
\end{theorem}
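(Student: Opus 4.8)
The plan is to prove the good-$\lambda$ inequality \eqref{1.6} by a Calderón--Zygmund stopping-time (Vitali-type) decomposition of the level set $E(\beta\lambda)$, exploiting that $E(\lambda)=\{\cm(|F|^{p_1})>\lambda\}$ is open with finite measure (for $\lambda>0$, modulo the standard reduction to compactly supported $F$, $f$, which we may assume by a routine approximation argument). First I would fix $\lambda\in(0,\fz)$, assume $E(\beta\lambda)\ne\emptyset$ (otherwise there is nothing to prove), and perform a Whitney-type decomposition $E(\beta\lambda)=\bigcup_j Q_j$ into dyadic cubes (or, equivalently, use a covering by balls $B_j$) with bounded overlap and with $\mathrm{diam}(Q_j)\sim\dist(Q_j,\, E(\beta\lambda)^\complement)$. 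The point of the Whitney selection is that for each $j$ we can pick a point $y_j$ near $Q_j$ with $y_j\notin E(\lambda)$, i.e. $\cm(|F|^{p_1})(y_j)\le\lambda$, which will let us insert the hypotheses \eqref{1.4} and \eqref{1.5} on a fixed multiple $B_j\supset Q_j$ of the Whitney ball.

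Next, on each Whitney ball $B_j$ I would decompose $|F|\le|F_{B_j}|+|R_{B_j}|$ as in the hypothesis and estimate the local oscillation. The key sublevel-set containment to establish is of the form
\begin{align*}
\lf\{x\in B_j:\ \cm(|F|^{p_1})(x)>\beta\lz\r\}
&\subset\lf\{x\in B_j:\ \cm\lf(|F\mathbf{1}_{2B_j}|^{p_1}\r)(x)>c\beta\lz\r\}\\
&\subset\lf\{x\in B_j:\ \cm\lf(|F_{B_j}\mathbf{1}_{2B_j}|^{p_1}\r)(x)>\tfrac{c}{2}\beta\lz\r\}
\cup\lf\{x\in B_j:\ \cm_\gz\lf(|f|^{p_2}\r)(x)>(\kappa\lz)^{\frac{p_2}{p_1}}\r\},
\end{align*}
where the first inclusion uses that for $x\in B_j$ the maximal function $\cm(|F|^{p_1})(x)$ splits into the local part over $2B_j$ plus a tail controlled by $\cm(|F|^{p_1})(y_j)\le\lambda\le\beta\lambda/2$ (here $\beta$ is taken large, which is exactly where $\beta_0$ enters), and the second inclusion uses \eqref{1.4} together with \eqref{1.5} to absorb $|R_{B_j}|$ and the ``bad'' part of $|F_{B_j}|$: the term $\epsilon\,[\cm(|F|^{p_1})(y_j)]^{1/p_1}\le(\epsilon\lambda)^{1/p_1}$ is harmless once $\epsilon<\epsilon_0$, and the $\cm_\gz(|f|^{p_2})$ terms feed into the second set (choosing $\kappa<\kappa_0$ small). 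For the main set $\{x\in B_j:\ \cm(|F_{B_j}\mathbf{1}_{2B_j}|^{p_1})>\frac{c}{2}\beta\lz\}$ I would use the weak-$(1,1)$ boundedness of $\cm$ (equivalently a Chebyshev estimate at level $\beta\lambda$) together with the $L^{p_3/p_1}$ control of $|F_{B_j}|$ coming from \eqref{1.5}; this yields a Lebesgue-measure bound
$$
\lf|\lf\{x\in B_j:\ \cm\lf(|F_{B_j}\mathbf{1}_{2B_j}|^{p_1}\r)(x)>\tfrac c2\beta\lz\r\}\r|\le C\,\beta^{-a}\,|B_j|,
$$
where the exponent $a\in(1,p_3/p_1)$ is dictated precisely by the gain $p_3>p_1$ in the hypothesis (Kolmogorov's inequality / interpolation between $L^1$ weak type and $L^{p_3/p_1}$). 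Passing from Lebesgue measure to the weighted measure $\omega$ via $\omega\in RH_s(\rn)$ converts the factor $\beta^{-a}\,|B_j|/|B_j|$ into $\beta^{-(s-1)a/s}$ times $\omega(B_j)$ (using $\omega(E)/\omega(B)\lesssim(|E|/|B|)^{(s-1)/s}$ for $E\subset B$, the standard consequence of the reverse Hölder inequality), and here one must also use that $B_j\subset c E(\beta\lambda)\subset cE(\lambda)$ — more precisely that the dilated Whitney balls have controlled overlap and are contained in $E(\lambda)$ — so that $\sum_j\omega(B_j)\lesssim\omega(E(\lambda))$. Summing over $j$ then gives \eqref{1.6}.

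The main obstacle I anticipate is the bookkeeping around the two different maximal operators and two different "test points" $x_1,x_2$ in \eqref{1.4}--\eqref{1.5}: one must arrange the Whitney geometry so that a single point $y_j\notin E(\lambda)$ simultaneously controls the $\cm(|F|^{p_1})$ terms, while the $\cm_\gz(|f|^{p_2})$ terms are handled by the global level set in \eqref{1.6} rather than localized — this forces a careful choice of which multiple of the Whitney ball to use in applying the hypotheses and a careful tracking of the comparability constants $c$ so that the threshold $\frac c2\beta\lambda$ stays above $\lambda$ (fixing $\beta_0$) and the $(\kappa\lambda)^{p_2/p_1}$ thresholds match. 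A secondary technical point is the endpoint case $p_3=\fz$ (and $p_1$ or $p_2$ possibly $\le 1$ or $=\fz$), where the $L^{p_3/p_1}$ step must be replaced by an $L^\infty$ bound and Kolmogorov's inequality by a trivial measure estimate; this changes constants but not the structure of the argument.
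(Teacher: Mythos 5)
Your overall skeleton (Whitney covering, localization of $\cm$ near a good reference point, weak-type estimates for $\cm$, reverse H\"older to pass from Lebesgue measure to $\omega$, bounded overlap to sum) is the same as the paper's, but there is a genuine structural error at the very first step: you decompose the \emph{upper} level set $E(\bz\lz)$ and then claim that near each Whitney cube one can pick $y_j\notin E(\lz)$, i.e. with $\cm(|F|^{p_1})(y_j)\le\lz$. The Whitney geometry of $E(\bz\lz)$ only provides nearby points outside $E(\bz\lz)$, i.e. with $\cm(|F|^{p_1})(y_j)\le\bz\lz$; since $E(\bz\lz)\subset E(\lz)$ and $E(\lz)$ may be much larger, the Whitney cubes of $E(\bz\lz)$ can lie deep inside $E(\lz)$, so no such $y_j$ exists in general. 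This is not cosmetic: with a reference point controlled only at height $\bz\lz$, \eqref{1.4} gives $\bigl(\fint_{8B_j}|R_{8B_j}|^{p_3}\,dx\bigr)^{1/p_3}\ls(\bz\lz)^{1/p_1}+(\kappa\lz)^{1/p_1}$, and the Chebyshev/weak-type estimate at threshold $c\bz\lz$ then yields only $\bigl|\{\cm(|R_{8B_j}|^{p_1}\mathbf{1}_{8B_j})>c\bz\lz\}\bigr|\ls|B_j|$, with no decay in $\bz$ at all; the factor $\bz^{-a}$, which is the entire content of \eqref{1.6}, is lost, and unlike the $F_B$ term there is no small parameter multiplying $C_1$ to compensate. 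The repair is exactly the paper's choice: decompose the \emph{lower} level set $E(\lz)$ (noting $H_\lz:=\{\cm(|F|^{p_1})>\bz\lz,\ \cm_\gz(|f|^{p_2})\le(\kappa\lz)^{p_2/p_1}\}\subset E(\lz)$), so that $4B_i$ meets $E(\lz)^\complement$ and one genuinely has $\cm(|F|^{p_1})(y_i)\le\lz$, while $\sum_i\omega(B_i)\ls\omega(E(\lz))$ still follows from bounded overlap.

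A second, related muddle concerns which hypothesis produces the $\bz^{-a}$ gain. You attribute it to the set $\{\cm(|F_{B_j}\mathbf{1}_{2B_j}|^{p_1})>\frac c2\bz\lz\}$ ``using the $L^{p_3/p_1}$ control of $|F_{B_j}|$ coming from \eqref{1.5}'', but \eqref{1.5} gives only an $L^{p_1}$-average bound on $F_B$ (with the small constant $\epsilon$); the higher-integrability bound $L^{p_3}$ is on $R_B$, via \eqref{1.4}. For $p_3<\fz$ the $R_B$ contribution cannot be absorbed pointwise as in your displayed containment: it must be kept as a separate set and estimated by the weak $(\frac{p_3}{p_1},\frac{p_3}{p_1})$ bound for $\cm$, which is precisely where $\bz^{-p_3/p_1}$, hence $\bz^{-a}$ for any $a\in(1,\frac{p_3}{p_1})$, comes from; the $F_B$ set is estimated by the weak $(1,1)$ bound and made admissible by choosing $\epsilon$ and $\kappa$ small after $\bz$ is fixed. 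Your containment that disposes of $R_B$ entirely is valid only in the endpoint case $p_3=\fz$, where $\|R_B\|_{L^\fz(8B)}\le C_1(1+\kappa^{1/p_1})^{1/p_1}\lz^{1/p_1}$ forces that set to be empty for $\bz$ large, which is how the paper handles that case.
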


Theorem \ref{t1.1} gives a weighted good-$\lz$ inequality for a pair of functions, $(F,f)$,
on $\rn$ satisfying the assumptions \eqref{1.4} and \eqref{1.5}. We prove Theorem \ref{t1.1} via borrowing some
ideas from the proofs of \cite[Proposition 1.5]{a07}
(see also \cite[Theorem 3.1]{am07}) and \cite[Theorem 3.2]{sh07} (see also \cite[Theorem 4.2.3]{sh18}).
More precisely, we show Theorem \ref{t1.1} via using the Whitney covering lemma
(see, for instance, \cite[p.\,15, Lemma 2]{St93} or Lemma \ref{l3.2} below), the (weak) boundedness
of the Hardy--Littlewood maximal operator $\cm$ on Lebesgue spaces, and some properties of
the reverse H\"older class.

To describe a general (two-weight) boundedness criterion for a pair of functions, $(F,f)$,
on $\rn$ considered in this article, we recall the definitions of weighted Lorentz spaces,
(Lorentz--)Morrey spaces, and variable Lebesgue spaces as follows.

\begin{definition}\label{d1.4}
Assume that $q\in(0,\fz)$, $t\in(0,\fz]$, and $\omega$ is a weight on $\rn$.
The \emph{weighted Lorentz space} $L^{q,\,t}_\omega(\rn)$ is defined by setting
$$L^{q,\,t}_\omega(\rn):=\lf\{f\ \text{is measurable on}\ \rn:\
\|f\|_{L^{q,\,t}_\omega(\rn)}<\fz\r\},
$$
where, when $t\in(0,\fz)$,
$$\|f\|_{L^{q,\,t}_\omega(\rn)}:=\lf\{q\int_0^\fz
\lf[\lz^q\omega\lf(\lf\{x\in\rn:\ |f(x)|>\lz\r\}\r)\r]^{\frac tq}\frac{d\lz}{\lz}\r\}^{\frac1t},
$$
and
$$\|f\|_{L^{q,\,\fz}_\omega(\rn)}:=\sup_{\lz\in(0,\fz)}\lf\{\lz\lf[\omega\lf(\{x\in\rn:\
|f(x)|>\lz\}\r)\r]^{\frac1q}\r\}.
$$
\end{definition}

It is easy to see that, when $q=t$, $L^{q,\,t}_\omega(\rn)=L^q_\omega(\rn)$.
If $\omega\equiv1$, then we write $L^{q,\,t}_\omega(\rn)=L^{q,\,t}(\rn)$.

\begin{remark}\label{r1.2}
Let $q\in(0,\fz)$, $t\in(0,\fz]$, $s\in(0,\fz)$, and $\omega$ be a weight on $\rn$.
Then, for any $f\in L^1_\loc(\rn)$, if $|f|^s\in L^{q,\,t}_\omega(\rn)$,
then $\||f|^s\|_{L^{q,\,t}_\omega(\rn)}=\|f\|^s_{L^{qs,\,ts}_\omega(\rn)}$
(see, for instance, \cite[Remark 1.4.7]{g14}).
\end{remark}

In what follows, for any $x\in\rn$ and $r\in(0,\fz)$, we always let
$B(x,r):=\{y\in\rn:\ |y-x|<r\}$.

\begin{definition}\label{d1.5}
Let $q\in(0,\fz)$, $t\in(0,\fz]$, and $\tz\in[0,n]$.
The \emph{Lorentz--Morrey space} $L^{q,\,t;\,\tz}(\rn)$ is defined by setting
$$L^{q,\,t;\,\tz}(\rn):=\lf\{f\ \text{is measurable on}\ \rn:\
\|f\|_{L^{q,\,t;\,\tz}(\rn)}<\fz\r\},
$$
where
$$\|f\|_{L^{q,\,t;\,\tz}(\rn)}:=\sup_{x\in\rn,\,r\in(0,\fz)}
\lf\{r^{\frac{\tz-n}{q}}\|f\|_{L^{q,\,t}(B(x,r))}\r\}:=\sup_{x\in\rn,\,r\in(0,\fz)}
\lf\{r^{\frac{\tz-n}{q}}\|f\mathbf{1}_{B(x,r)}\|_{L^{q,\,t}(\rn)}\r\}
$$
and  $\mathbf{1}_{B(x,r)}$ denotes the characteristic function of $B(x,r)$.
\end{definition}

We point out that, when $\tz=n$, the Lorentz--Morrey space $L^{q,\,t;\,\tz}(\rn)$
is just the \emph{Lorentz space}; in this case, we denote the space $L^{q,\,t;\,\tz}(\rn)$
simply by $L^{q,\,t}(\rn)$. Moreover, when $q=t$, the space $L^{q,\,t;\,\tz}(\rn)$
is just the \emph{Morrey space}; in this case, we denote the space $L^{q,\,t;\,\tz}(\rn)$
simply by $\cm^{\tz}_q(\rn)$ (see, for instance, \cite{a15,sdh20} for more details on Morrey spaces).

\begin{definition}\label{d1.6}
Let $\mathcal{P}(\rn)$ be the set of all measurable functions $p:\,\rn\to(0,\fz)$.
For any given $p(\cdot)\in\mathcal{P}(\rn)$,
the \emph{variable exponent modular} $\rho_{p(\cdot)}$ is defined by setting,
for any $f\in L^1_\loc(\rn)$,
$$\rho_{p(\cdot)}(f):=\int_{\rn}|f(x)|^{p(x)}\,dx.$$
The \emph{variable Lebesgue space} $L^{p(\cdot)}(\rn)$ is defined by setting
\begin{align*}
L^{p(\cdot)}(\rn)=\lf\{f\ \text{is measurable on}\ \rn:\  \text{there exists a}\ \lz\in(0,\fz)\
\text{such that}\ \rho_{p(\cdot)}(\lz f)<\fz\r\}
\end{align*}
equipped with the \emph{Luxembourg norm} (which is also called the
\emph{Luxembourg--Nakano norm})
$$\|f\|_{L^{p(\cdot)}(\rn)}:=\inf\lf\{\lz\in(0,\fz):\
\rho_{p(\cdot)}\lf(\frac{f}{\lz}\r)\le1\r\}
$$
(see, for instance, \cite{cf13,dhhr11} for more details on variable Lebesgue spaces).
Moreover, for any $p(\cdot)\in\mathcal{P}(\rn)$, let
\begin{equation}\label{1.7}
p_-:=\mathop{\mathrm{ess\,inf}}\limits_{x\in\rn}p(x)\ \quad
\text{and} \quad \ p_+:=\mathop{\mathrm{ess\,sup}}\limits_{x\in\rn}p(x).
\end{equation}
\end{definition}

\begin{remark}\label{r1.3}
Let $p(\cdot)\in\cp(\rn)$ satisfy $p_+<\fz$. Then, for any $s\in(0,\fz)$ and $f\in L^{sp(\cdot)}(\rn)$,
$\||f|^s\|_{L^{p(\cdot)}(\rn)}=\|f\|^s_{L^{sp(\cdot)}(\rn)}$
(see, for instance, \cite[Proposition 2.18]{cf13}).
\end{remark}

Applying the weighted good-$\lz$ inequality given in Theorem \ref{t1.1}, and the two-weight boundedness of
the Hardy--Littlewood maximal operator $\cm$, we obtain the following (two-weight) boundedness criterion
for a pair of functions, $(F,f)$, on $\rn$ satisfying the assumptions \eqref{1.4} and \eqref{1.5} in the scale of
weighted Lebesgue spaces, weighted Lorentz spaces, (Lorentz--)Morrey spaces, and variable Lebesgue spaces.

\begin{theorem}\label{t1.2}
Let $p_1,\,p_2,\,p_3\in(0,\fz]$ satisfy $p_3>\max\{p_1,\,p_2\}$, $q\in(\max\{p_1,\,p_2\},p_3)$, $\Phi$ be a doubling
Young function satisfying
$$\int_c^\fz\lf[\frac{t^{(\frac{q}{p_2})'}}{\Phi(t)}\r]^{\frac{q}{p_2}-1}\frac{dt}{t}<\fz
$$
for some constant $c\in(0,\fz)$,
and $F,\,f\in L^{1}_{\loc}(\rn)$. Assume that the weights $\omega$ and $v$ satisfy
that $\omega\in RH_s(\rn)$ with some $s\in((\frac{p_3}{q})',\fz]$, and
\begin{equation}\label{1.8}
\sup_{B\subset\rn}\lf(\fint_B \omega\,dx\r)\lf\|v^{-\frac{p_2}{q}}\r\|^{\frac{q}{p_2}}_{\Phi,\,B}<\fz,
\end{equation}
where the supremum is taken over all balls $B$ in $\rn$.
Assume further that $F$ and $f$ satisfy \eqref{1.4} and \eqref{1.5} with $\gz=0$ and some $\epsilon\in(0,\fz)$
such that \eqref{1.6} holds true.
\begin{itemize}
\item[\rm(i)] Then there exists a positive constant $C$, independent of $F$ and $f$, such that
\begin{equation*}
\|F\|_{L^q_\omega(\rn)}\le C\|f\|_{L^q_v(\rn)}.
\end{equation*}

\item[\rm(ii)] Let $t\in(0,\fz]$. Assume further that there exists a $q_0\in(1,\frac{q}{p_2})$ such that
$$\int_c^\fz\lf[\frac{t^{q_0'}}{\Phi(t)}\r]^{q_0-1}\frac{dt}{t}<\fz
$$
for some constant $c\in(0,\fz)$, and the weights $\omega$ and $v$ satisfy
\begin{equation*}
\sup_{B\subset\rn}\lf(\fint_B \omega\,dx\r)\lf\|v^{-\frac{1}{q_0}}\r\|^{q_0}_{\Phi,\,B}<\fz
\end{equation*}
with the supremum taken over all balls $B$ of $\rn$.
Then there exists a positive constant $C$, independent of $F$ and $f$, such that
\begin{equation*}
\|F\|_{L^{q,\,t}_\omega(\rn)}\le C\|f\|_{L^{q,\,t}_v(\rn)}.
\end{equation*}

\item[\rm(iii)] Let $t\in(0,\fz]$ and $\tz\in(\frac{nq}{p_3},n]$.
Then there exists a positive constant $C$, independent of $F$ and $f$, such that
\begin{equation*}
\|F\|_{L^{q,\,t;\,\tz}(\rn)}\le C\|f\|_{L^{q,\,t;\,\tz}(\rn)}.
\end{equation*}

In particular, there exists a positive constant $C$, independent of $F$ and $f$,
such that
\begin{equation*}
\|F\|_{\cm^{\tz}_q(\rn)}\le C\|f\|_{\cm^{\tz}_q(\rn)}.
\end{equation*}

\item[\rm(iv)] Let $p(\cdot)\in\mathcal{P}(\rn)$, and $p_-$ and $p_+$ be as in \eqref{1.7}.
Assume that $p_2<p_-\le p_+< p_3$ and $\cm$ is bounded on $L^{\frac{p(\cdot)}{p_2}}(\rn)$.
Then there exists a positive constant $C$, independent of $F$ and $f$, such that
\begin{equation*}
\|F\|_{L^{p(\cdot)}(\rn)}\le C\|f\|_{L^{p(\cdot)}(\rn)}.
\end{equation*}
\end{itemize}
\end{theorem}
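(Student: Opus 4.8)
The plan is to reduce every one of the four statements to an estimate for the Hardy--Littlewood maximal function of $|F|^{p_1}$, then to feed the weighted good-$\lz$ inequality \eqref{1.6} of Theorem \ref{t1.1} (used with $\gz=0$, so $\cm_\gz=\cm$) into the distribution-function expression of the norm in question, absorb the resulting self-referential term by choosing $\bz$ large, and finally replace $\cm(|f|^{p_2})$ by $|f|^{p_2}$ via the (two-weight or classical) boundedness of $\cm$. Since $|F|^{p_1}\le\cm(|F|^{p_1})$ almost everywhere by the Lebesgue differentiation theorem, in every target space $X$ one has $\|F\|_X\le\|[\cm(|F|^{p_1})]^{1/p_1}\|_X$, so it suffices to bound $\cm(|F|^{p_1})$; and by Theorem \ref{t1.1} the inequality \eqref{1.6} is available for all $\bz\in[\bz_0,\fz)$, so $\bz$ may be taken as large as we wish. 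For (i), the layer-cake formula gives
$$\|F\|_{L^q_\omega(\rn)}^q\le\int_\rn\lf[\cm(|F|^{p_1})\r]^{\frac{q}{p_1}}\omega\,dx=\frac{q}{p_1}\int_0^\fz\lz^{\frac{q}{p_1}-1}\omega(E(\lz))\,d\lz=:\frac{q}{p_1}I.$$
Multiplying \eqref{1.6} by $\lz^{q/p_1-1}$, integrating over $\lz\in(0,\fz)$, substituting $\lz\mapsto\bz\lz$ on the left-hand side, and substituting $\mu=(\kappa\lz)^{p_2/p_1}$ in the remaining term yield, with a constant $C_0=C_0(p_1,p_2,q)$,
$$\bz^{-\frac{q}{p_1}}I\le\bz^{-\frac{(s-1)a}{s}}I+C_0\,\kappa^{-\frac{q}{p_1}}\int_0^\fz\mu^{\frac{q}{p_2}-1}\omega\lf(\lf\{x\in\rn:\ \cm(|f|^{p_2})(x)>\mu\r\}\r)\,d\mu.$$
The assumption $s\in((p_3/q)',\fz]$ is exactly what allows one to pick $a\in(1,p_3/p_1)$ with $\frac{(s-1)a}{s}>\frac{q}{p_1}$ (take $a$ close to $p_3/p_1$); hence, enlarging $\bz$, the coefficient $\bz^{-q/p_1}-\bz^{-(s-1)a/s}$ is at least $\frac12\bz^{-q/p_1}>0$, and---once $I<\fz$ is known, see the last paragraph---one absorbs the first term on the right to obtain $\|F\|_{L^q_\omega(\rn)}\ls\|\cm(|f|^{p_2})\|_{L^{q/p_2}_\omega(\rn)}^{1/p_2}$. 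Since $q/p_2\in(1,\fz)$, $\Phi$ is a doubling Young function with $\int_c^\fz[t^{(q/p_2)'}/\Phi(t)]^{q/p_2-1}\,\frac{dt}{t}<\fz$, and \eqref{1.8} is precisely \eqref{1.3} with $p$ replaced by $q/p_2$, P\'erez's theorem \cite{p95} shows that $\cm$ is bounded from $L^{q/p_2}_v(\rn)$ to $L^{q/p_2}_\omega(\rn)$; therefore $\|\cm(|f|^{p_2})\|_{L^{q/p_2}_\omega(\rn)}\ls\||f|^{p_2}\|_{L^{q/p_2}_v(\rn)}=\|f\|_{L^q_v(\rn)}^{p_2}$, proving (i).

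For (ii) one runs the same scheme at the level of the weighted Lorentz quasi-norm. By Remark \ref{r1.2}, $\|F\|_{L^{q,\,t}_\omega(\rn)}^{p_1}\le\||F|^{p_1}\|_{L^{q/p_1,\,t/p_1}_\omega(\rn)}\le\|\cm(|F|^{p_1})\|_{L^{q/p_1,\,t/p_1}_\omega(\rn)}$, and one estimates $\int_0^\fz[\lz^{q/p_1}\omega(E(\lz))]^{t/q}\,\frac{d\lz}{\lz}$ by substituting $\lz\mapsto\bz\lz$, invoking \eqref{1.6}, and using $(a+b)^{t/q}\ls a^{t/q}+b^{t/q}$. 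The governing inequality is still $\frac{(s-1)a}{s}>\frac{q}{p_1}$, so enlarging $\bz$ (now also to beat the constant coming from the last inequality) makes the self-referential term carry a coefficient $\le\frac12$, and after absorbing it one gets, again via Remark \ref{r1.2}, $\|F\|_{L^{q,\,t}_\omega(\rn)}\ls\|\cm(|f|^{p_2})\|_{L^{q/p_2,\,t/p_2}_\omega(\rn)}^{1/p_2}$. It remains to bound $\cm$ from $L^{q/p_2,\,t/p_2}_v(\rn)$ to $L^{q/p_2,\,t/p_2}_\omega(\rn)$: the extra bump hypothesis with exponent $q_0\in(1,q/p_2)$ yields, by P\'erez's theorem \cite{p95}, the boundedness of $\cm$ from $L^{q_0}_v(\rn)$ to $L^{q_0}_\omega(\rn)$, and interpolating this strong-type bound (Marcinkiewicz interpolation) with the trivial endpoint at $\fz$---legitimate since $\omega,v>0$ a.e.\ ($\omega\in RH_s(\rn)\subset A_\fz(\rn)$, and the bump conditions force $v^{-1/q_0}\in L^1_\loc(\rn)$), so that both $L^\fz$-spaces equal $L^\fz(\rn)$---gives boundedness on the intermediate space $L^{q/p_2,\,t/p_2}(\rn)$, because $q/p_2\in(q_0,\fz)$. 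Hence $\|\cm(|f|^{p_2})\|_{L^{q/p_2,\,t/p_2}_\omega(\rn)}\ls\||f|^{p_2}\|_{L^{q/p_2,\,t/p_2}_v(\rn)}=\|f\|_{L^{q,\,t}_v(\rn)}^{p_2}$, proving (ii). (When $t=\fz$ one uses the $\fz$-version of the Lorentz quasi-norm throughout, the argument being analogous.)

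For (iii) and (iv) one applies \eqref{1.6} with $\omega\equiv1$, so $s=\fz$ and any $a\in(1,p_3/p_1)$ is admissible (in case (iv) we additionally arrange $a>p_+/p_1$, possible as $p_+<p_3$); this gives the unweighted good-$\lz$ inequality $|E(\bz\lz)|\le\bz^{-a}|E(\lz)|+|\{x\in\rn:\ \cm(|f|^{p_2})(x)>(\kappa\lz)^{p_2/p_1}\}|$. In (iii), one argues ball by ball in the definition of $\|\cdot\|_{L^{q,\,t;\,\tz}(\rn)}$: retracing the Whitney-covering proof of Theorem \ref{t1.1} yields a version of this inequality localized to a fixed ball $B(x,r)$, and, after splitting $F=F\mathbf{1}_{4B(x,r)}+F\mathbf{1}_{(4B(x,r))^c}$, the contribution of the tail to $\cm(|F|^{p_1})$ on $B(x,r)$ is handled by Morrey-type average estimates; the restriction $\tz\in(\frac{nq}{p_3},n]$ is precisely what makes the tail and the ensuing geometric series summable. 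Combined with the (classical) boundedness of $\cm$ on $L^{q/p_2,\,t/p_2;\,\tz}(\rn)$ for $\tz\in(0,n]$, this gives $\|F\|_{L^{q,\,t;\,\tz}(\rn)}\ls\|f\|_{L^{q,\,t;\,\tz}(\rn)}$, and the case $t=q$ is the Morrey statement $\|F\|_{\cm^{\tz}_q(\rn)}\ls\|f\|_{\cm^{\tz}_q(\rn)}$. In (iv), one invokes the variable-exponent analogue of the passage from a good-$\lz$ inequality to a norm inequality: since $p_2<p_-\le p_+<p_3$ and $\cm$ is bounded on $L^{p(\cdot)/p_2}(\rn)$, the displayed good-$\lz$ inequality upgrades to $\|\cm(|F|^{p_1})\|_{L^{p(\cdot)/p_1}(\rn)}\ls\|\cm(|f|^{p_2})\|_{L^{p(\cdot)/p_2}(\rn)}^{p_1/p_2}$, and a second application of the boundedness of $\cm$ on $L^{p(\cdot)/p_2}(\rn)$ together with Remark \ref{r1.3} replaces the right-hand side by $\|f\|_{L^{p(\cdot)}(\rn)}^{p_1}$, whence $\|F\|_{L^{p(\cdot)}(\rn)}\ls\|f\|_{L^{p(\cdot)}(\rn)}$.

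I anticipate two points that need genuine care. First, every absorption above is legitimate only once the absorbed quantity (such as $I$ in (i), and its Lorentz, Morrey, and variable-exponent analogues) is known a priori to be finite; this is dealt with by a routine truncation---replacing $\cm(|F|^{p_1})$ by $\min\{\cm(|F|^{p_1}),N\}$ and cutting the $\lz$-integral at $N$ (or, as in the applications, first working with a dense subclass of $f$ for which $F$ belongs to the relevant space), deriving the estimate with constants independent of $N$, and letting $N\to\fz$. Second, and more delicate, is the localization in (iii): one must manufacture a genuinely local form of the good-$\lz$ inequality attached to a fixed ball and track how the higher integrability exponent $p_3$ in \eqref{1.4} interacts with the Morrey scaling---this is the source of the condition $\tz>\frac{nq}{p_3}$ and the step demanding the most work.
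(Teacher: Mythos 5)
Your parts (i) and (ii) are essentially the paper's own argument: integrate the good-$\lz$ inequality \eqref{1.6} of Theorem \ref{t1.1} against $\lz^{q/p_1-1}$ (resp.\ its Lorentz analogue) over a truncated range, absorb using $\frac{a(s-1)}{s}>\frac{q}{p_1}$ (available since $s>(\frac{p_3}{q})'$), and then apply the two-weight boundedness of $\cm$; your finiteness/truncation remark matches the paper's device of integrating over $(0,T)$ and letting $T\to\fz$. Your shortcut in (ii) for the two-weight Lorentz bound of $\cm$ --- interpolating the strong $(q_0,q_0)$ bound from \cite{p95} with the $L^\fz$ endpoint, which is legitimate because the bump condition forces $v>0$ a.e.\ so that $L^\fz_v(\rn)=L^\fz_\omega(\rn)=L^\fz(\rn)$ --- is a valid and somewhat more direct alternative to the paper's Lemma \ref{l3.5}(ii), which instead manufactures an intermediate weight $u$ with $c_1\omega\le u\le c_2v$ and interpolates two one-weight bounds. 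However, (iii) is a genuine gap: you propose to ``retrace'' the Whitney argument to get a good-$\lz$ inequality localized to $B(x,r)$, split $F=F\mathbf{1}_{4B}+F\mathbf{1}_{(4B)^\complement}$ and control the tail by ``Morrey-type average estimates,'' but none of this is executed, and it is not clear it can be closed: the tail contribution of $\cm(|F|^{p_1})$ on $B(x,r)$ is controlled by averages of $|F|^{p_1}$ over large balls, i.e.\ by the unknown quantity itself, so one needs either an a priori Morrey bound on $F$ or a self-improvement loop your sketch does not supply (and the asserted boundedness of $\cm$ on Lorentz--Morrey spaces is also left unproved). The paper's proof avoids localization entirely: following \cite{mp12}, it applies part (ii) with the single weight $\omega=v=\omega_x:=\min\{|x-\cdot|^{-n+\tz-\rho_0},\,r^{-n+\tz-\rho_0}\}$, where $\rho_0$ is chosen with $\tz-\rho_0>\frac{nq}{p_3}$ so that $\omega_x\in A_p(\rn)$ for all $p$ and $\omega_x\in RH_s(\rn)$ for some $s>(\frac{p_3}{q})'$, and then bounds $\|f\|_{L^{q,t}_{\omega_x}(\rn)}\ls r^{-\rho_0/q}\|f\|_{L^{q,\,t;\,\tz}(\rn)}$ by a layer-cake estimate on the weight; that weight-choice trick is the missing idea.

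Part (iv) contains a step that would fail as stated. You invoke ``the variable-exponent analogue of the passage from a good-$\lz$ inequality to a norm inequality'' starting from the \emph{unweighted} ($\omega\equiv1$) good-$\lz$ inequality. No such passage exists: with $\omega\equiv1$, \eqref{1.6} only compares Lebesgue measures of level sets, and $L^{p(\cdot)}(\rn)$ is not rearrangement invariant, so distribution-function information alone cannot yield $\|\cm(|F|^{p_1})\|_{L^{p(\cdot)/p_1}(\rn)}\ls\|\cm(|f|^{p_2})\|^{p_1/p_2}_{L^{p(\cdot)/p_2}(\rn)}$ (two functions with identical distribution functions may have very different $L^{p(\cdot)}$ norms). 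The correct mechanism --- and the one the paper uses --- is to exploit that the weighted inequality of part (i) with $\omega=v:=\omega_0$ holds for \emph{every} $\omega_0\in A_{q_0/p_2}(\rn)\cap RH_s(\rn)$ with $s>(\frac{p_3}{q_0})'$ (Theorem \ref{t1.1} holds for all such weights, and the bump condition is then automatic by reverse H\"older), and to feed this family of weighted inequalities into the limited-range extrapolation theorem in variable Lebesgue spaces of Cruz-Uribe and Wang \cite{cw17} (Lemma \ref{l3.7}), applied to the pair $(|F|^{p_2q_0/(q_0+\epsilon_0)},|f|^{p_2q_0/(q_0+\epsilon_0)})$, followed by the rescaling of Remark \ref{r1.3}. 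Without extrapolation (or some equivalent device that quantifies the good-$\lz$ inequality uniformly over a family of $A_\fz$ weights), your argument for (iv) cannot be completed.
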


A \emph{distinctive feature} of the (two-weight) boundedness criterion obtained in Theorem \ref{t1.2} is
its independence of specific operators. It is flexible enough to obtain the boundedness
of both linear operators and non-linear operators in the scale of weighted Lebesgue spaces,
weighted Lorentz spaces, (Lorentz--)Morrey spaces, and variable Lebesgue spaces.

We prove (i) and (ii) of Theorem \ref{t1.2} by using the weighted good-$\lz$ inequality established
in Theorem \ref{t1.1} and the two-weight boundedness of the Hardy--Littlewood maximal
operator $\cm$ in the scale of both weighted Lebesgue spaces and weighted Lorentz spaces.
Then, via choosing special weights $\omega$ and $v$, and applying the conclusion of
Theorem \ref{t1.2}(ii), we show (iii) of Theorem \ref{t1.2}.
We also point out that, for the proof of Theorem \ref{t1.2}(iii),
we borrow some ideas from the proof of \cite[Theorem 2.3]{mp12}.
Finally, using the special case $\omega=v$
of Theorem \ref{t1.2}(i) and the limited range extrapolation theorem in the
variable exponent case established by Cruz-Uribe and Wang \cite[Theorem 2.14]{cw17}
(see also Lemma \ref{l3.7} below), we prove (iv) of Theorem \ref{t1.2}.

Moreover, applying Theorem \ref{t1.1} and the two-weight boundedness of the fractional Hardy--Littlewood maximal operator
$\cm_\gz$, we obtain the following off-diagonal (two-weight) boundedness criterion for a pair of functions, $(F,f)$,
on $\rn$ satisfying the assumptions \eqref{1.4} and \eqref{1.5} in the scale of both weighted Lebesgue spaces and Morrey spaces.

\begin{theorem}\label{t1.3}
Let $\gz\in(0,1)$, $p_1,\,p_2,\,p_3\in(0,\fz]$ satisfy
$$p_3>\max\{p_1,\,p_2\}\quad\text{and}\quad  q\in\lf(\max\{p_1,\,p_2\},p_3\r),$$
and $F,\,f\in L^{1}_{\loc}(\rn)$.
Assume that $q(1-\gz)>p_2$, $p\in(p_2,\fz)$ is given by setting
$\frac{1}{p}:=\frac{1}{q}+\frac{\gz}{p_2}$, and $\Phi$ is a doubling Young function satisfying
$$\int_c^\fz\lf[\frac{t^{(\frac{p}{p_2})'}}{\Phi(t)}\r]^{\frac{p}{p_2}-1}\frac{dt}{t}<\fz
$$
for some constant $c\in(0,\fz)$.  Let the weights $\omega$ and $v$ satisfy
that $\omega\in RH_s(\rn)$ with some $s\in((\frac{p_3}{q})',\fz]$, and
\begin{equation}\label{1.9}
\sup_{B\subset\rn}\lf(\fint_B \omega\,dx\r)\lf\|v^{-\frac{p_2}{p}}\r\|^{\frac{q}{p_2}}_{\Phi,\,B}<\fz,
\end{equation}
where the supremum is taken over all balls $B$ of $\rn$.
Assume further that $F$ and $f$ satisfy \eqref{1.4} and \eqref{1.5}
with some $\epsilon\in(0,\fz)$ such that \eqref{1.6} holds true.
\begin{itemize}
\item[\rm(i)] Then there exists a positive constant $C$, independent of $F$ and $f$, such that
\begin{equation*}
\|F\|_{L^q_\omega(\rn)}\le C\|f\|_{L^{p}_v(\rn)}.
\end{equation*}

\item[\rm(ii)] Let $\tz\in(\frac{nq}{p_3},n]$ and $\wz{\tz}:=n-\frac{p}{q}(n-\tz)$.
Then there exists a positive constant $C$, independent of $F$ and $f$, such that
\begin{equation*}
\|F\|_{\cm^\tz_q(\rn)}\le C\|f\|_{\cm^{\wz{\tz}}_p(\rn)}.
\end{equation*}
\end{itemize}
\end{theorem}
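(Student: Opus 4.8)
The plan is to run, in the off-diagonal regime $\gz\in(0,1)$, the same scheme used for Theorem \ref{t1.2}: integrate the weighted good-$\lz$ inequality of Theorem \ref{t1.1} to bound $F$ by the fractional maximal function of $f$ in $L^q_\omega(\rn)$, and then invoke the two-weight boundedness of $\cm_\gz$; part (ii) is then deduced from part (i) by specializing the weights, as in the proof of Theorem \ref{t1.2}(iii).

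For (i), I would first combine $|F|\le[\cm(|F|^{p_1})]^{1/p_1}$ a.e.\ with the layer-cake formula,
$$\|F\|^q_{L^q_\omega(\rn)}\le\int_{\rn}\lf[\cm(|F|^{p_1})\r]^{\frac{q}{p_1}}\omega\,dx=\frac{q}{p_1}\int_0^\fz\lz^{\frac{q}{p_1}}\omega(E(\lz))\,\frac{d\lz}{\lz}=:\frac{q}{p_1}J.$$
Since $s>(p_3/q)'$, the interval $\lf(\frac{qs}{p_1(s-1)},\frac{p_3}{p_1}\r)$ is nonempty and lies in $\lf(1,\frac{p_3}{p_1}\r)$; fixing $a$ in it yields $\frac{q}{p_1}<\frac{(s-1)a}{s}$, and then \eqref{1.6} is available (it is part of the hypothesis, and holds by Theorem \ref{t1.1} for $\bz\ge\bz_0$ once $\epsilon,\kappa$ are small). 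Multiplying \eqref{1.6} by $\lz^{\frac{q}{p_1}-1}$, integrating in $\lz$, and substituting $\lz\mapsto\bz\lz$ in the first term on the right, the positive coefficient $\bz^{-\frac{q}{p_1}}-\bz^{-\frac{(s-1)a}{s}}$ permits absorbing $J$ (after the routine truncation of $\cm(|F|^{p_1})$ at height $N$ and letting $N\to\fz$ to secure finiteness), so
$$J\ls\int_0^\fz\lz^{\frac{q}{p_1}}\,\omega\lf(\lf\{x\in\rn:\ \cm_\gz(|f|^{p_2})(x)>(\kappa\lz)^{\frac{p_2}{p_1}}\r\}\r)\,\frac{d\lz}{\lz}.$$
The change of variables $\sz=(\kappa\lz)^{p_2/p_1}$ identifies the right-hand side (up to a constant) with $\int_{\rn}[\cm_\gz(|f|^{p_2})]^{q/p_2}\omega\,dx$, whence, by Remark \ref{r1.2}, $\|F\|_{L^q_\omega(\rn)}\ls\|[\cm_\gz(|f|^{p_2})]^{1/p_2}\|_{L^q_\omega(\rn)}$. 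Finally, with $P:=p/p_2$ and $Q:=q/p_2$, the relation $\frac1p=\frac1q+\frac{\gz}{p_2}$ becomes $\frac1P-\frac1Q=\gz$ and $v^{-p_2/p}=v^{-1/P}$, so \eqref{1.9} is exactly the ($\Phi$-bumped) two-weight condition under which $\cm_\gz$ maps $L^P_v(\rn)$ into $L^Q_\omega(\rn)$; applying this to $|f|^{p_2}$ and invoking Remark \ref{r1.2} again to restore $p$-th powers gives $\|[\cm_\gz(|f|^{p_2})]^{1/p_2}\|_{L^q_\omega(\rn)}\ls\|f\|_{L^p_v(\rn)}$, which is (i).

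For (ii), I would fix a ball $B_0=B(x_0,r_0)$ and apply (i) to the translated and dilated power weights
$$\omega_0(x):=r_0^{\tz-n}\lf(1+\frac{|x-x_0|}{r_0}\r)^{-\mu},\qquad v_0(x):=r_0^{\wz\tz-n}\lf(1+\frac{|x-x_0|}{r_0}\r)^{-\nu}$$
(with $\mu,\nu>0$ to be chosen) and with $\Phi_0(t):=t^r$ for some $r>(p/p_2)'$, which is doubling and obeys the required integral condition. Because $\wz\tz-n=-\frac pq(n-\tz)$, all powers of $r_0$ cancel both in the $RH$-quantity of $\omega_0$ and in the supremum defining \eqref{1.9} for $(\omega_0,v_0,\Phi_0)$, so after an affine change of variables these become quantities independent of $x_0,r_0$; they are finite provided: (a) $\mu<n-\frac{nq}{p_3}$, which makes $\omega_0\in RH_{s'}$ for some $s'>(p_3/q)'$; (b) $\nu\le\frac pq\mu$, the only remaining requirement for \eqref{1.9} with the power bump; and (c) $\nu>\frac pq(n-\tz)=n-\wz\tz$, needed for a geometric series below. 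Constraints (b) and (c) force $\mu>n-\tz$, and thus all three hold simultaneously \emph{precisely} when $\tz>\frac{nq}{p_3}$: choose $\mu\in\lf(n-\tz,\,n-\frac{nq}{p_3}\r)$ and then $\nu\in\lf(\frac pq(n-\tz),\,\frac pq\mu\r]$. Since \eqref{1.6} holds for $\omega_0$ by Theorem \ref{t1.1}, part (i) yields $\|F\|_{L^q_{\omega_0}(\rn)}\le C\|f\|_{L^p_{v_0}(\rn)}$ with $C$ independent of $B_0$. On the left, $\omega_0\gs r_0^{\tz-n}$ on $B_0$ gives $\|F\|_{L^q_{\omega_0}(\rn)}\gs r_0^{\frac{\tz-n}{q}}\|F\|_{L^q(B_0)}$. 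On the right, splitting $\rn$ into $B_0$ and the dyadic annuli $2^kB_0\setminus2^{k-1}B_0$, bounding $v_0\approx r_0^{\wz\tz-n}2^{-k\nu}$ there and $\|f\|^p_{L^p(2^kB_0)}\le(2^kr_0)^{n-\wz\tz}\|f\|^p_{\cm^{\wz\tz}_p(\rn)}$, and summing $\sum_k2^{k(n-\wz\tz-\nu)}<\fz$ (here (c) is used), we get $\|f\|_{L^p_{v_0}(\rn)}\ls\|f\|_{\cm^{\wz\tz}_p(\rn)}$ with constant independent of $B_0$. Combining the two sides and taking the supremum over all balls $B_0$ gives $\|F\|_{\cm^{\tz}_q(\rn)}\ls\|f\|_{\cm^{\wz\tz}_p(\rn)}$, that is, (ii).

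The steps I expect to be the main obstacle are: in (i), recognizing that $s>(p_3/q)'$ is exactly the threshold guaranteeing an admissible $a$ with $\frac{q}{p_1}<\frac{(s-1)a}{s}$, and making the absorption of $J$ rigorous; and in (ii), the bookkeeping verifying that constraints (a)--(c) on $(\mu,\nu)$ are compatible exactly under the hypothesis $\tz>\frac{nq}{p_3}$, together with checking that every weight constant inherited from (i) is genuinely uniform in the ball $B_0$.
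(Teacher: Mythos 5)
Your proposal is correct and follows essentially the same route as the paper: part (i) integrates the good-$\lz$ inequality of Theorem \ref{t1.1} against $\lz^{q/p_1-1}$, absorbs via $\frac{a(s-1)}{s}>\frac{q}{p_1}$ (available exactly because $s>(\frac{p_3}{q})'$), and then invokes the two-weight bump theorem for $\cm_\gz$ (the paper's Lemma \ref{l3.8}) with exponents $p/p_2$ and $q/p_2$, which is precisely how \eqref{1.9} is used there. Your weights in (ii), $r_0^{\tz-n}(1+|x-x_0|/r_0)^{-\mu}$ and $r_0^{\wz\tz-n}(1+|x-x_0|/r_0)^{-\nu}$, coincide up to normalization with the paper's truncated power weights $\omega_x$, $v_x$ (take $\mu=n-\tz+\rho_0$, $\nu=\frac{p}{q}\mu$), and your constraints (a)--(c) reproduce the paper's choice of $\rho_0$ with $\tz-\rho_0>\frac{nq}{p_3}$, so the argument matches the paper's proof of Theorem \ref{t1.3}(ii).
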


We show (i) of Theorem \ref{t1.3} by using the weighted good-$\lz$ inequality given
in Theorem \ref{t1.1} and the two-weight boundedness of the fractional Hardy--Littlewood maximal operator
$\cm_\gz$ in the scale of weighted Lebesgue spaces. Then, choosing special weights $\omega$ and $v$ and
applying Theorem \ref{t1.3}(i), we prove (ii) of Theorem \ref{t1.3}.

\begin{remark}\label{r1.4}
The two-weight boundedness of the fractional Hardy--Littlewood maximal operator
$\cm_\gz$ from the weighted Lorentz space $L^{p,\,s}_v(\rn)$ to $L^{q,\,t}_\omega(\rn)$ was studied by Rakotondratsimba \cite{r97}
under some assumptions for the weights $\omega$ and $v$, and the exponents $p,\,q,\,s$, and $t$
(see \cite[Corollary 2.7]{r97} for the details). Using the two-weight boundedness of
$\cm_\gz$ in the scale of weighted Lorentz spaces obtained by Rakotondratsimba \cite[Corollary 2.7]{r97},
and Theorem \ref{t1.1}, we can obtain the off-diagonal two-weight boundedness criterion for a pair of functions, $(F,f)$,
on $\rn$ in the scale of both weighted Lorentz spaces and Lorentz--Morrey spaces,
under some additional assumptions for the weights $\omega$ and $v$,
and the exponents appeared in the considered weighted Lorentz spaces and Lorentz--Morrey spaces.
Moreover, under the additional assumption $p_3=\fz$ in \eqref{1.4}, applying the conclusion of Theorem \ref{t1.3}(i)
and the off-diagonal extrapolation theorem in the scale of variable Lebesgue spaces obtained
by Cruz-Uribe and Wang \cite[Theorem 2.11]{cw17}, we can obtain the off-diagonal boundedness
criterion for a pair of functions, $(F,f)$, on $\rn$ in the case of variable Lebesgue spaces.
In order to limit the length of this article, we omit the details here.
\end{remark}

As applications of the general (two-weight) boundedness criterion obtained in Theorems \ref{t1.2}
and \ref{t1.3}, we prove the (two-weight) boundedness of Calder\'on--Zygmund operators, Littlewood--Paley $g$-functions,
Lusin area functions, Littlewood--Paley $g^\ast_\lz$-functions, and fractional integral operators
in the scale of weighted Lebesgue spaces, weighted Lorentz spaces, (Lorentz--)Morrey spaces,
and variable Lebesgue spaces, which have independent interests
and are presented in Subsection \ref{s2.1} below. Although some of these boundedness are known,
the proofs given in this article are quite different from those used in the existing literatures.
Indeed, using the boundedness of Calder\'on--Zygmund operators, Littlewood--Paley $g$-functions,
and Lusin area functions from $L^1(\rn)$ to $L^{1,\,\fz}(\rn)$, and the boundedness of
fractional integral operators $I_\az$, with $\az\in(0,n)$, from $L^1(\rn)$ to
$L^{\frac{n}{n-\az},\,\fz}(\rn)$, and applying Theorems \ref{t1.2} and \ref{t1.3}, we
obtain the above boundedness for Calder\'on--Zygmund operators, Littlewood--Paley $g$-functions,
Lusin area functions, Littlewood--Paley $g^\ast_\lz$-functions, and fractional integral operators.

Moreover, using the (two-weight) boundedness criterion given in Theorems \ref{t1.2}
and \ref{t1.3}, we obtain the (two-weight) boundedness of Riesz transforms, Littlewood--Paley $g$-functions,
and fractional integral operators associated with second-order divergence elliptic operators with
complex bounded measurable coefficients on $\rn$ in the scale of weighted Lebesgue spaces,
weighted Lorentz spaces, (Lorentz--)Morrey spaces,
and variable Lebesgue spaces, which are presented in Subsection \ref{s2.2} below.

The organization of the remainder of
of this article is as follows. Applications of Theorems \ref{t1.2} and \ref{t1.3}
are presented in Section \ref{s2} and their proofs are given, respectively, in Sections \ref{s4}
and \ref{s5}, while, in Section \ref{s3}, we show Theorems \ref{t1.1}, \ref{t1.2},
and \ref{t1.3}.

Finally, we make some conventions on notation.
Throughout this article, we always denote by $C$ a \emph{positive constant} which is
independent of the main parameters, but it may vary from line to
line. We also use $C_{(\gz,\,\bz,\,\ldots)}$ or $c_{(\gz,\,\bz,\,\ldots)}$
to denote a  \emph{positive constant} depending on the indicated parameters $\gz,$ $\bz$,
$\ldots$. The \emph{symbol} $f\ls g$ means that $f\le Cg$. If $f\ls
g$ and $g\ls f$, then we write $f\sim g$. If $f\le Cg$ and $g=h$ or $g\le h$, we then write $f\ls g\sim h$
or $f\ls g\ls h$, \emph{rather than} $f\ls g=h$ or $f\ls g\le h$.
For each ball $B:=B(x_B,r_B)$ in $\rn$, with some $x_B\in\rn$ and
$r_B\in (0,\fz)$, and $\az\in(0,\fz)$, let $\az B:=B(x_B,\az r_B)$.
For any measurable subset $E$ of $\rn$, we denote the \emph{set} $\rn\setminus E$ by $E^\complement$,
and its \emph{characteristic function} by $\mathbf{1}_{E}$.
For any weight $\omega$ on $\rn$ and any measurable set $E$ of $\rn$,
let $\omega(E):=\int_E\omega(x)\,dx$. For any given $q\in[1,\fz]$, we denote by $q'$
its \emph{conjugate exponent}, namely, $1/q + 1/q'= 1$.
Finally, for any measurable set $E$ of $\rn$, a weight $\omega$ on $\rn$,
and $f\in L^1(E)$, we denote the integral $\int_E|f(x)|\omega(x)\,dx$
simply by $\int_E|f|\omega\,dx$ and, when $|E|\in(0,\fz)$, we let
$$\fint_E f\,dx:=\frac{1}{|E|}\int_Ef(x)\,dx.$$

\section{Applications of Theorems \ref{t1.2} and \ref{t1.3}\label{s2}}

In this section, we give some applications of the general (two-weight) boundedness criterion
obtained in Theorems \ref{t1.2} and \ref{t1.3}, whose proofs are given, respectively,
in Sections \ref{s4} and \ref{s5}. More precisely, using Theorems \ref{t1.2} and \ref{t1.3},
we prove the (two-weight) boundedness of Calder\'on--Zygmund operators, Littlewood--Paley $g$-functions,
Lusin area functions, Littlewood--Paley $g^\ast_\lz$-functions, and fractional integral operators
in the scale of weighted Lebesgue spaces, weighted Lorentz spaces, (Lorentz--)Morrey spaces,
and variable Lebesgue spaces. Moreover, via using Theorems \ref{t1.2} and \ref{t1.3},
we obtain the (two-weight) boundedness of Riesz transforms, Littlewood--Paley $g$-functions,
and fractional integral operators associated with second-order divergence elliptic operators with
complex bounded measurable coefficients on $\rn$ in the scale of weighted Lebesgue spaces,
weighted Lorentz spaces, (Lorentz--)Morrey spaces, and variable Lebesgue spaces.

\subsection{Applications to some classical operators}\label{s2.1}

In this subsection, we present the (two-weight) boundedness of classical Calder\'on--Zygmund operators,
Littlewood--Paley $g$-functions, Lusin area functions, Littlewood--Paley $g^\ast_\lz$-functions,
and fractional integral operators in the scale of weighted Lebesgue spaces, weighted Lorentz spaces,
(Lorentz--)Morrey spaces, and variable Lebesgue spaces. We begin with the definitions of Calder\'on--Zygmund operators.

\begin{definition}\label{d2.1}
Let $\dz\in(0,1]$. A linear operator $T$ is called a \emph{Calder\'on--Zygmund operator} if
$T$ is bounded on $L^2(\rn)$ and, for any $f\in L^2(\rn)$ having compact support
and $x\in \supp(f):=\{y\in\rn:\ f(y)\neq0\}$,
$$T(f)(x):=\int_{\rn}K(x,y)f(y)\,dy,
$$
where the kernel function $K$ satisfies that there exists a positive constant
$C$ such that, for any $x,\,y,\,h\in\rn$ with $|h|<|x-y|/2$,
$$|K(x,y)|\le\frac{C}{|x-y|^n},
$$
$$|K(x+h,y)-K(x,y)|\le\frac{C|h|^\dz}{|x-y|^{n+\dz}},\quad\text{and}\quad |K(y,x+h)-K(y,x)|\le\frac{C|h|^\dz}{|x-y|^{n+\dz}}.
$$
\end{definition}

Then we have the following (two-weight) boundedness for Calder\'on--Zygmund operators.

\begin{theorem}\label{t2.1}
Let $q\in(1,\fz)$ and $\Phi$ be a doubling Young function satisfying
$$\int_c^\fz\lf[\frac{t^{q'}}{\Phi(t)}\r]^{q-1}\frac{dt}{t}<\fz
$$
for some constant $c\in(0,\fz)$. Assume that the weights $\omega$ and $v$ satisfy
that $\omega\in A_\fz(\rn)$ and
$$\sup_{B\subset\rn}\lf(\fint_B \omega\,dx\r)\lf\|v^{-\frac{1}{q}}\r\|^{q}_{\Phi,\,B}<\fz,
$$
where the supremum is taken over all balls $B$ of $\rn$.
Let $T$ be a Calder\'on--Zygmund operator.
\begin{itemize}
\item[\rm(i)] Then $T$ is bounded from $L^q_v(\rn)$ to $L^q_\omega(\rn)$, and
there exists a positive constant $C$ such that, for any $f\in L^q_v(\rn)$,
\begin{equation*}
\|T(f)\|_{L^q_\omega(\rn)}\le C\|f\|_{L^q_v(\rn)}.
\end{equation*}

\item[\rm(ii)] Let $t\in(0,\fz]$. Assume further that there exists a $q_0\in(1,q)$ such that
$$\int_c^\fz\lf[\frac{t^{q_0'}}{\Phi(t)}\r]^{q_0-1}\frac{dt}{t}<\fz
$$
for some constant $c\in(0,\fz)$, and the weights $\omega$ and $v$ satisfy
$$\sup_{B\subset\rn}\lf(\fint_B \omega\,dx\r)\lf\|v^{-\frac{1}{q_0}}\r\|^{q_0}_{\Phi,\,B}<\fz
$$
with the supremum taken over all balls $B$ of $\rn$. Then $T$ is bounded
from $L^{q,\,t}_v(\rn)$ to $L^{q,\,t}_\omega(\rn)$,
and there exists a positive constant $C$ such that, for any $f\in L^{q,\,t}_v(\rn)$,
\begin{equation*}
\|T(f)\|_{L^{q,\,t}_\omega(\rn)}\le C\|f\|_{L^{q,\,t}_v(\rn)}.
\end{equation*}

\item[\rm(iii)] Let $t\in(0,\fz]$ and $\tz\in(0,n]$.
Then $T$ is bounded on $L^{q,\,t;\,\tz}(\rn)$,
and there exists a positive constant $C$ such that, for any $f\in L^{q,\,t;\,\tz}(\rn)$,
\begin{equation*}
\|T(f)\|_{L^{q,\,t;\,\tz}(\rn)}\le C\|f\|_{L^{q,\,t;\,\tz}(\rn)}.
\end{equation*}

In particular, $T$ is bounded on $\cm^{\tz}_q(\rn)$,
and there exists a positive constant $C$ such that, for any $f\in \cm^{\tz}_q(\rn)$,
\begin{equation*}
\|T(f)\|_{\cm^{\tz}_q(\rn)}\le C\|f\|_{\cm^{\tz}_q(\rn)}.
\end{equation*}

\item[\rm(iv)] Assume that $p(\cdot)\in\mathcal{P}(\rn)$ satisfies $1<p_-\le p_+< \fz$ with $p_-$ and $p_+$
as in \eqref{1.7}, and that $\cm$ is bounded on $L^{p(\cdot)}(\rn)$.
Then $T$ is bounded on $L^{p(\cdot)}(\rn)$,
and there exists a positive constant $C$ such that, for any $f\in L^{p(\cdot)}(\rn)$,
\begin{equation*}
\|T(f)\|_{L^{p(\cdot)}(\rn)}\le C\|f\|_{L^{p(\cdot)}(\rn)}.
\end{equation*}
\end{itemize}
\end{theorem}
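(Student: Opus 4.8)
The plan is to verify the hypotheses of Theorems \ref{t1.2} and \ref{t1.3} for the pair $(F,f):=(T(f),f)$ and then invoke those results directly. The crucial structural input is the standard Calder\'on--Zygmund decomposition of $T(f)$ on a ball: given a ball $B$, write $f=f\mathbf{1}_{2B}+f\mathbf{1}_{(2B)^\complement}=:f_1+f_2$, and set $F_B:=T(f_1)$ and $R_B:=T(f_2)$, so that $|F|=|T(f)|\le|F_B|+|R_B|$ on $B$. For $R_B$, the kernel size estimate $|K(x,y)|\le C|x-y|^{-n}$ together with the cancellation of the size of $2B$ gives, for any $x\in B$, $|R_B(x)|=|T(f_2)(x)|\ls\cm(|f|)(x_2)$ for any $x_2\in B$; taking any $p_3\in(1,\fz)$ yields \eqref{1.4} with $p_1$ arbitrary, $p_2:=1$, $\gz:=0$, and $C_2$ absorbed (here the term $\cm(|F|^{p_1})(x_1)^{1/p_1}$ does not even appear, which only helps). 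For $F_B$, the weak $(1,1)$ boundedness of $T$ gives $|\{x\in B:\,|T(f_1)(x)|>\lz\}|\ls\lz^{-1}\int_{2B}|f|\,dy$, hence by Kolmogorov's inequality, for any $p_1\in(0,1)$, $(\fint_B|F_B|^{p_1}\,dx)^{1/p_1}\ls\fint_{2B}|f|\,dy\ls\cm(|f|)(x_2)$ for any $x_2\in B$; this is \eqref{1.5} with $\epsilon=0$ (so any $\epsilon\in(0,\fz)$ works). Thus the assumptions \eqref{1.4} and \eqref{1.5} hold with $p_2=1$, $\gz=0$, $\epsilon$ as small as we like, and $p_1\in(0,1)$, $p_3\in(1,\fz)$ free to be chosen; in particular Theorem \ref{t1.1} applies, so \eqref{1.6} holds.

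For part (i), given $q\in(1,\fz)$ and the doubling Young function $\Phi$ with $\int_c^\fz[t^{q'}/\Phi(t)]^{q-1}\,dt/t<\fz$ and the bump condition $\sup_B(\fint_B\omega\,dx)\|v^{-1/q}\|_{\Phi,B}^q<\fz$, I choose $p_2:=1$, so $q/p_2=q$ and the hypotheses of Theorem \ref{t1.2} on $\Phi$ and on the two weights match verbatim. The condition $\omega\in RH_s(\rn)$ with $s\in((p_3/q)',\fz]$ is arranged by recalling that $\omega\in A_\fz(\rn)$ implies $\omega\in RH_s(\rn)$ for some $s\in(1,\fz]$; since $p_3$ is ours to choose, pick $p_3>q$ large enough that $(p_3/q)'$ is smaller than that $s$, and choose $p_1\in(0,1)$ with $p_1<q<p_3$ and $a\in(1,p_3/p_1)$ as needed. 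Then Theorem \ref{t1.2}(i) gives $\|T(f)\|_{L^q_\omega(\rn)}\le C\|f\|_{L^q_v(\rn)}$ for all $f$ in a suitable dense class; a routine density/Fatou argument (using weak $(1,1)$ of $T$ to make sense of $T(f)$) extends this to all $f\in L^q_v(\rn)$, which is the asserted boundedness.

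Parts (ii), (iii), (iv) follow the same template by invoking the corresponding items of Theorem \ref{t1.2}: for (ii) the extra hypothesis on $q_0\in(1,q/p_2)=(1,q)$ and the second bump condition are exactly what Theorem \ref{t1.2}(ii) requires with $p_2=1$; for (iii) the range $\tz\in(nq/p_3,n]$ is vacuous in the limit since $p_3$ can be taken arbitrarily large, so any $\tz\in(0,n]$ is admissible, and Theorem \ref{t1.2}(iii) applies (again a density argument promotes the a priori estimate to all $f$ in the relevant space); for (iv) the hypotheses $1<p_-\le p_+<\fz$ and boundedness of $\cm$ on $L^{p(\cdot)}(\rn)=L^{p(\cdot)/p_2}(\rn)$ (with $p_2=1$) together with $p_2=1<p_-\le p_+<p_3$ (choosing $p_3>p_+$) are precisely the hypotheses of Theorem \ref{t1.2}(iv). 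The main obstacle, and the only place requiring genuine care, is the reduction to the good-$\lz$ setup: one must check that the Calder\'on--Zygmund decomposition $F_B=T(f\mathbf{1}_{2B})$, $R_B=T(f\mathbf{1}_{(2B)^\complement})$ is legitimate for the a priori class of nice $f$ (e.g. bounded with compact support), that the off-ball kernel estimate genuinely yields the pointwise bound by $\cm(|f|)$ on $B$, and that the final density argument is valid in each of the four function-space scales; none of these is deep, but they are the substance of the proof, whereas the weighted good-$\lz$ machinery is entirely outsourced to Theorems \ref{t1.1} and \ref{t1.2}.
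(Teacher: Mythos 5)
Your overall strategy is the paper's: decompose $T(f)$ on each ball into a local piece and an off-ball piece, control the local piece via the weak $(1,1)$ boundedness of $T$ plus the Kolmogorov inequality (this part of your argument is correct and is exactly what the paper does, with $8B$ in place of $2B$, $p_1=\nu\in(0,1)$, $p_2=1$, $\epsilon=0$), and then outsource everything else to Theorems \ref{t1.1}--\ref{t1.3}. The gap is in your treatment of $R_B=T(f\mathbf{1}_{(2B)^\complement})$. You claim that the size estimate $|K(x,y)|\le C|x-y|^{-n}$ alone yields the pointwise bound $|R_B(x)|\ls\cm(|f|)(x_2)$ for $x,x_2\in B$, and you conclude that the term $[\cm(|F|^{p_1})(x_1)]^{1/p_1}$ in \eqref{1.4} ``does not even appear.'' This estimate is false: decomposing $(2B)^\complement$ into the annuli $2^{j+1}B\setminus 2^jB$ gives
\begin{equation*}
\int_{(2B)^\complement}\frac{|f(y)|}{|x-y|^{n}}\,dy
\ls\sum_{j\ge1}\fint_{2^{j+1}B}|f|\,dy,
\end{equation*}
a series with no geometric decay; concretely, for $f=\mathbf{1}_{B(\mathbf{0},R)}$ and $B=B(\mathbf{0},1)$ the left-hand side is of order $\log R$ while $\cm(f)\le1$, so no bound uniform in $f\in L^\fz_{\rm c}(\rn)$ can come out of this computation. (This is the same phenomenon as in Cotlar's inequality, where the maximal truncations of $T$ are controlled by $\cm(Tf)+\cm(f)$ and the $\cm(Tf)$ term cannot be dropped.) Note also that your argument never uses the H\"older regularity of the kernel, which is a sign something is missing.

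The paper's proof fixes precisely this point: using the smoothness estimate $|K(x,z)-K(y,z)|\ls|x-y|^{\dz}|x-z|^{-n-\dz}$ one bounds the \emph{oscillation} of $T(f\mathbf{1}_{(8B)^\complement})$ on $B$ by $\cm(f)(x_1)$ (here the factor $2^{-j\dz}$ makes the dyadic sum converge), and then, comparing with the infimum over $B$ via $|R_B|\le|F|+|F_B|$ and the Kolmogorov bound on $F_B$, obtains
\begin{equation*}
\|R_B\|_{L^\fz(B)}\ls\lf[\cm\lf(|F|^{\nu}\r)(x_2)\r]^{\frac1\nu}+\cm(f)(x_1),
\end{equation*}
i.e. \eqref{1.4} with $p_3=\fz$ and with the $\cm(|F|^{p_1})$ term genuinely present. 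With $p_3=\fz$ the restriction $\tz\in(\frac{nq}{p_3},n]$ in Theorem \ref{t1.2}(iii) automatically covers all $\tz\in(0,n]$, and $s>(\frac{p_3}{q})'=1$ is automatic from $\omega\in A_\fz(\rn)$, so no limiting argument in $p_3$ is needed. Your verification of \eqref{1.5} and your reading of how parts (i)--(iv) follow from Theorem \ref{t1.2} are otherwise fine, but as written the key hypothesis \eqref{1.4} is not established, so the proof is incomplete until the off-ball estimate is redone using the kernel regularity.
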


Denote by $L^\fz_{\rm c}(\rn)$ the set of all bounded and measurable functions
on $\rn$ with compact support. Let $f\in L^\fz_{\rm c}(\rn)$, $F:=T(f)$, and $\nu\in(0,1)$ be a constant.
For any ball $B$ of $\rn$,  let $F_B:=T(f\mathbf{1}_{8B})$ and $R_B:=T(f-f\mathbf{1}_{8B})$.
To prove Theorem \ref{t2.1} via applying Theorem \ref{t1.2}, it suffices to show that \eqref{1.4} and \eqref{1.5} hold true
for $p_1:=\nu$, $p_2:=1$, $p_3:=\fz$, and $\epsilon:=0$. We prove this by using the boundedness of $T$
from $L^1(\rn)$ to $L^{1,\,\fz}(\rn)$, and the Kolmogorov inequality (see Section \ref{s4} below for the details).

\begin{remark}\label{r2.1}
\begin{itemize}
\item[\rm(i)] The conclusion of Theorem \ref{t2.1}(i) is well known (see, for instance, \cite{cmp07,crv14,cp02,r98a}).
It is worth pointing out that the proof of Theorem \ref{t2.1}(i) presented in this article is quite different
from that given in \cite{cmp07,crv14,cp02,r98a}. More precisely, Theorem \ref{t2.1}(i) was proved
in \cite{cmp07,crv14,cp02,r98a} by using the sharp maximal function control for the operator $T$ or the Haar shift theory.
However, Theorem \ref{t2.1}(i) is showed in this article via applying the two-weight boundedness criterion
obtained in Theorem \ref{t1.2}.
\item[\rm(ii)] Theorem \ref{t2.1}(ii) was established in \cite{km97,r98c} under a different assumption
on the weights $\omega$ and $v$.
\item[\rm(iii)] The boundedness of the Calder\'on--Zygmund operator $T$ on the Morrey space $\cm^\tz_q(\rn)$
is well known (see, for instance, \cite[Theorem 8.1]{a15}). To the best of our knowledge, the boundedness of
$T$ on the Lorentz--Morrey space obtained in Theorem \ref{t2.1}(iii) is new.
\item[\rm(iv)] Theorem \ref{t2.1}(vi) is known (see, for instance, \cite[Theorem 4.8]{dr03}).
However, the method used in this article for the proof of Theorem \ref{t2.1}(vi) is different from
that used in \cite[Theorem 4.8]{dr03}. Indeed, Theorem \ref{t2.1}(vi) was showed in \cite[Theorem 4.8]{dr03}
by using the sharp maximal function control for the operator $T$ and the boundedness of $\cm$ on $L^{p(\cdot)}(\rn)$.
\end{itemize}
\end{remark}

Denote by $\mathcal{S}(\rn)$ the \emph{Schwartz space} equipped with the well-known classical
topology determined by a countable family of norms. Furthermore, denote by $\mathcal{S}'(\rn)$ the topological dual
space of $\mathcal{S}(\rn)$ equipped the weak-$\ast$ topology.
For any $\varphi\in\mathcal{S}(\rn)$ and $t\in(0,\infty)$, let $\varphi_t(\cdot):= \frac{1}{t^n}\varphi(\frac{\cdot}{t})$.

\begin{definition}\label{d2.2}
Let $\phi\in\mathcal{S}(\rn)$ satisfy $\int_{\rn}\phi\,dx=0$, and $f\in\mathcal{S}'(\rn)$.
The \emph{Littlewood--Paley $g$-function} $g(f)$ is defined by setting, for any $x\in\rn$,
$$g(f)(x):=\lf[\int_0^\fz\lf|\phi_t\ast f(x)\r|^2\frac{dt}{t}\r]^{\frac12}.
$$
Let $\az\in[1,\fz)$. The \emph{Lusin area function $S_\az(f)$} is defined by setting, for any $x\in\rn$,
$$S_\az(f)(x):=\lf[\int_0^\fz\int_{\Gamma_\az(x)}|\phi_t\ast f(x)|^2\frac{dy\,dt}{t^{n+1}}\r]^{\frac12},
$$
where $\Gamma_\az(x):=\{y\in\rn:\ |x-y|<\az t\}$. In particular, when $\az:=1$,
$S_\az(f)$ is denoted simply by $S(f)$.

Let $\lz\in(0,\fz)$. The \emph{Littlewood--Paley $g^\ast_\lz$-function}
$g^\ast_\lz(f)$ is defined by setting, for any $x\in\rn$,
$$g^\ast_\lz(f)(x):=\lf[\int_0^\fz\int_{\rn}\lf(\frac{t}{t+|y|}\r)^{\lz n}|\phi_t\ast
f(x-y)|^2\frac{dy\,dt}{t^{n+1}}\r]^{\frac12}.
$$
\end{definition}

We then have the following (two-weight) boundedness of Littlewood--Paley $g$-functions.

\begin{theorem}\label{t2.2}
Let $q\in(1,\fz)$, $\Phi$, $\omega$, and $v$ be as in Theorem \ref{t2.1}.

\begin{itemize}
\item[\rm(i)] Then there exists a positive constant $C$ such that, for any $f\in L^q_v(\rn)$,
\begin{equation*}
\lf\|g(f)\r\|_{L^q_\omega(\rn)}\le C\|f\|_{L^q_v(\rn)}.
\end{equation*}

\item[\rm(ii)] Let $t\in(0,\fz]$. Assume further that there exists a $q_0\in(1,q)$ such that
$$\int_c^\fz\lf[\frac{t^{q_0'}}{\Phi(t)}\r]^{q_0-1}\frac{dt}{t}<\fz
$$
for some constant $c\in(0,\fz)$, and the weights $\omega$ and $v$ satisfy
$$\sup_{B\subset\rn}\lf(\fint_B \omega\,dx\r)\lf\|v^{-\frac{1}{q_0}}\r\|^{q_0}_{\Phi,\,B}<\fz
$$
with the supremum taken over all balls $B$ of $\rn$.
Then there exists a positive constant $C$ such that, for any $f\in L^{q,\,t}_v(\rn)$,
\begin{equation*}
\|g(f)\|_{L^{q,\,t}_\omega(\rn)}\le C\|f\|_{L^{q,\,t}_v(\rn)}.
\end{equation*}

\item[\rm(iii)] Let $t\in(0,\fz]$ and $\tz\in(0,n]$.
Then there exists a positive constant $C$ such that, for any $f\in L^{q,\,t;\,\tz}(\rn)$,
\begin{equation*}
\|g(f)\|_{L^{q,\,t;\,\tz}(\rn)}\le C\|f\|_{L^{q,\,t;\,\tz}(\rn)}.
\end{equation*}

In particular, there exists a positive constant $C$ such that,
for any $f\in \cm^\tz_q(\rn)$,
$$\|g(f)\|_{\cm^\tz_q(\rn)}\le C\|f\|_{\cm^\tz_q(\rn)}.$$

\item[\rm(iv)] Assume that $p(\cdot)\in\mathcal{P}(\rn)$ satisfies $1<p_-\le p_+< \fz$ with
$p_-$ and $p_+$ as in \eqref{1.7}, and that $\cm$ is bounded on $L^{p(\cdot)}(\rn)$.
Then there exists a positive constant $C$ such that, for any $f\in L^{p(\cdot)}(\rn)$,
\begin{equation*}
\|g(f)\|_{L^{p(\cdot)}(\rn)}\le C\|f\|_{L^{p(\cdot)}(\rn)}.
\end{equation*}
\end{itemize}
\end{theorem}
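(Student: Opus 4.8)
The plan is to derive Theorem~\ref{t2.2} as a direct application of the abstract criterion in Theorem~\ref{t1.2}, mirroring what was done for Calder\'on--Zygmund operators in Theorem~\ref{t2.1}. The key point is to verify the structural hypotheses \eqref{1.4} and \eqref{1.5} for the pair $(F,f)$ with $F:=g(f)$. As in the remark following Theorem~\ref{t2.1}, I would first reduce to $f\in L^\infty_{\rm c}(\rn)$ by a density argument, and then fix a ball $B=B(x_B,r_B)$ and split $f=f\mathbf 1_{8B}+f\mathbf 1_{(8B)^\complement}=:f_1+f_2$. Since the Littlewood--Paley $g$-function is sublinear, $g(f)\le g(f_1)+g(f_2)$ on $B$, so I set $F_B:=g(f_1)$ and $R_B:=g(f_2)$, which immediately gives the pointwise bound $|F|\le|F_B|+|R_B|$ on $B$.

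The choice of exponents is $p_1:=\nu$ for some fixed $\nu\in(0,1)$, $p_2:=1$, $p_3:=\infty$, $\gz=0$, and $\epsilon:=0$. For \eqref{1.5}, I would invoke the weak-type $(1,1)$ boundedness of $g$ (which follows from its $L^2$-boundedness together with the standard Calder\'on--Zygmund decomposition, the vector-valued structure of the $g$-function being a classical fact), so $g$ maps $L^1(\rn)\to L^{1,\infty}(\rn)$; then the Kolmogorov inequality yields, for any $x_2\in B$,
$$\lf(\fint_B|g(f_1)|^\nu\,dx\r)^{1/\nu}\ls\frac{1}{|B|}\|f_1\|_{L^1(\rn)}=\frac{1}{|B|}\int_{8B}|f|\,dx\ls\cm(|f|)(x_2),$$
which is exactly \eqref{1.5} with $\epsilon=0$ and $C_2$ an absolute constant. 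For \eqref{1.4}, I would estimate $g(f_2)(x)$ for $x\in B$ by exploiting the size condition $|\phi_t(y)|\ls t^{-n}(1+|y|/t)^{-n-1}$ (or any fixed decay, using $\phi\in\cs(\rn)$) together with the cancellation $\int_{\rn}\phi\,dx=0$; splitting the $t$-integral at $t\sim r_B$ and using that $f_2$ is supported away from $2B$, one obtains the standard pointwise domination $g(f_2)(x)\ls\cm(|f|)(x_1)$ for all $x,x_1\in B$, giving \eqref{1.4} with $p_3=\infty$. Because $\epsilon=0$, the good-$\lz$ inequality \eqref{1.6} holds for the choice of $\bz$, $\kappa$ provided by Theorem~\ref{t1.1}, so all hypotheses of Theorem~\ref{t1.2} are met.

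With \eqref{1.4} and \eqref{1.5} verified, parts (i)--(iv) of Theorem~\ref{t2.2} follow verbatim from the corresponding parts of Theorem~\ref{t1.2}: part~(i) from Theorem~\ref{t1.2}(i), part~(ii) from Theorem~\ref{t1.2}(ii) (noting the extra $q_0$-bump hypothesis is imposed precisely so that Theorem~\ref{t1.2}(ii) applies), part~(iii) from Theorem~\ref{t1.2}(iii) with $p_3=\infty$ so that the restriction $\tz\in(nq/p_3,n]$ becomes $\tz\in(0,n]$, and part~(iv) from Theorem~\ref{t1.2}(iv) with $p_2=1$, where the hypothesis that $\cm$ is bounded on $L^{p(\cdot)}(\rn)$ is literally the hypothesis that $\cm$ is bounded on $L^{p(\cdot)/p_2}(\rn)$. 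Finally, the passage from $f\in L^\infty_{\rm c}(\rn)$ to general $f$ in each target space is a routine density and Fatou argument, using that $g$ is well defined pointwise and lower semicontinuous under $L^1_{\loc}$-convergence.

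The main obstacle I anticipate is not conceptual but bookkeeping: one must ensure that the kernel-type estimates needed for \eqref{1.4} are genuinely available for the $g$-function with a Schwartz convolution kernel $\phi$, i.e.\ that the square-function structure does not spoil the pointwise domination by $\cm$. This is handled by the decay and cancellation of $\phi$ exactly as in the classical proof that $g$ is a Calder\'on--Zygmund (vector-valued) operator; once that pointwise bound is in hand, the rest is a mechanical invocation of Theorem~\ref{t1.2}. A secondary point to be careful about is that $p_3=\infty$ forces the ``usual modification'' in \eqref{1.4}, which is harmless here since $g(f_2)$ is in fact bounded on $B$.
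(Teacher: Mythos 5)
Your overall strategy is the paper's: take $F:=g(f)$ for $f\in L^\fz_{\rm c}(\rn)$, set $F_B:=g(f\mathbf{1}_{8B})$ and $R_B:=g(f\mathbf{1}_{(8B)^\complement})$, verify \eqref{1.4} and \eqref{1.5} with $p_1:=\nu\in(0,1)$, $p_2:=1$, $p_3:=\fz$, $\gz=0$, $\epsilon:=0$, and then quote Theorem \ref{t1.2}(i)--(iv); your treatment of $F_B$ (weak $(1,1)$ boundedness of $g$ plus the Kolmogorov inequality) is exactly \eqref{4.4}. The gap is in your verification of \eqref{1.4}: you claim the pointwise domination $g(f\mathbf{1}_{(8B)^\complement})(x)\ls\cm(f)(x_1)$ for $x,x_1\in B$, obtained from the size estimate of $\phi_t$ together with the cancellation $\int_{\rn}\phi\,dx=0$ by splitting the $t$-integral at $t\sim r_B$. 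That bound is not available. For $t\gs r_B$ the size estimate alone gives only $|\phi_t\ast(f\mathbf{1}_{(8B)^\complement})(x)|\ls\cm(f)(x_1)$ with no decay in $t$, so $\int_{r_B}^\fz\cdots\frac{dt}{t}$ diverges, and the mean-zero property of $\phi$ gives no decay for a general far part (it only helps when $f$ is essentially constant at the relevant scale). In fact the claimed pointwise inequality is false: with $B:=B(0,1)$, $f:=\sum_{j=2}^{J}(-1)^j\mathbf{1}_{\{4^j\le|z|\le 4^{j+1}\}}$ and a mean-zero $\phi\in C^\fz_{\rm c}(\rn)$ supported in an annulus with $\int_{\{1\le|w|\le4\}}\phi(-w)\,dw\ne0$, one gets $|\phi_t\ast f(0)|\gs1$ on a fixed $\frac{dt}{t}$-neighborhood of each scale $t\sim4^j$, hence $g(f)(0)\gs\sqrt{J}$, while $\cm(f)\le1$ everywhere. (The same phenomenon occurs for Calder\'on--Zygmund operators, which is why the proof of Theorem \ref{t2.1} also does not use a pointwise bound of $T(f\mathbf{1}_{(8B)^\complement})$ by $\cm(f)$.)

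The correct route, and the one the paper takes, is an oscillation argument: by the triangle inequality in $L^2(\frac{dt}{t})$, for every $y\in B$, $|R_B(x)|\le\big[\int_0^\fz|\phi_t\ast f_2(x)-\phi_t\ast f_2(y)|^2\frac{dt}{t}\big]^{1/2}+g(f_2)(y)$ with $f_2:=f\mathbf{1}_{(8B)^\complement}$; the mean value theorem applied to $\phi$ produces the extra factor $\frac{r_B}{t}$ which makes the large-$t$ part of the integral converge, so the difference term is $\ls\cm(f)(x_1)$ (this uses smoothness of $\phi$, not its cancellation). Taking the infimum over $y\in B$ and using $\inf_{y\in B}g(f_2)(y)\le(\fint_B|R_B|^\nu\,dy)^{1/\nu}\ls(\fint_B|F|^\nu\,dy)^{1/\nu}+(\fint_B|F_B|^\nu\,dy)^{1/\nu}$ yields $\|R_B\|_{L^\fz(B)}\ls[\cm(|F|^\nu)(x_2)]^{1/\nu}+\cm(f)(x_1)$, i.e., \eqref{1.4} in the weaker form in which the term $\cm(|F|^{p_1})$ genuinely appears — Theorem \ref{t1.1} is designed precisely to allow it. With this correction, the remainder of your plan (the exponent bookkeeping and the invocation of Theorem \ref{t1.2}(i)--(iv), including $p_3=\fz$ so that $\tz\in(0,n]$ in (iii) and $p_2=1$ in (iv)) goes through exactly as you describe.
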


\begin{remark}\label{r2.2}
We point out that the two-weight boundedness of the Littlewood--Paley $g$-function
in the scale of weighted Lebesgue spaces was established in \cite{ll14} under a different
assumption on the weights $\omega$ and $v$. To the best of our knowledge, the boundedness of
the Littlewood--Paley $g$-function on weighted Lorentz spaces and Lorentz--Morrey spaces obtained in
(ii) and (iii) of Theorem \ref{t2.2} is new.

The boundedness of the Littlewood--Paley $g$-function on $L^{p(\cdot)}(\rn)$ is a direct corollary of
the one-weight boundedness of the Littlewood--Paley $g$-function in the scale of weighted Lebesgue spaces
(see, for instance, \cite{w08}) and the Rubio de Francia extrapolation theorem (see, for instance, \cite{cmp11}).
Thus, the conclusion of Theorem \ref{t2.2}(iv) is known.
\end{remark}

For the Lusin area function, we have the following (two-weight) boundedness.

\begin{theorem}\label{t2.3}
Let $q\in(1,\fz)$, $\az\in[1,\fz)$, $\Phi$, $\omega$, and $v$ be as in Theorem \ref{t2.1}.

\begin{itemize}
\item[\rm(i)] Then there exists a positive constant $C$, independent of $\az$, such that, for any $f\in L^q_v(\rn)$,
\begin{equation*}
\lf\|S_\az(f)\r\|_{L^q_\omega(\rn)}\le C\az^n\|f\|_{L^q_v(\rn)}.
\end{equation*}

\item[\rm(ii)] Let $t\in(0,\fz]$. Assume further that there exists a $q_0\in(1,q)$ such that
$$\int_c^\fz\lf[\frac{t^{q_0'}}{\Phi(t)}\r]^{q_0-1}\frac{dt}{t}<\fz
$$
for some constant $c\in(0,\fz)$, and the weights $\omega$ and $v$ satisfy
$$\sup_{B\subset\rn}\lf(\fint_B \omega\,dx\r)\lf\|v^{-\frac{1}{q_0}}\r\|^{q_0}_{\Phi,\,B}<\fz
$$
with the supremum taken over all balls $B$ of $\rn$.
Then there exists a positive constant $C$, independent of $\az$, such that,
for any $f\in L^{q,\,t}_v(\rn)$,
\begin{equation*}
\lf\|S_\az(f)\r\|_{L^{q,\,t}_\omega(\rn)}\le C\az^n\|f\|_{L^{q,\,t}_v(\rn)}.
\end{equation*}

\item[\rm(iii)] Let $t\in(0,\fz]$ and $\tz\in(0,n]$.
Then there exists a positive constant $C$, independent of $\az$, such that, for any $f\in L^{q,\,t;\,\tz}(\rn)$,
\begin{equation*}
\lf\|S_\az(f)\r\|_{L^{q,\,t;\,\tz}(\rn)}\le C\az^n\|f\|_{L^{q,\,t;\,\tz}(\rn)}.
\end{equation*}

In particular, there exists a positive constant $C$, independent of $\az$, such that,
for any $f\in \cm^\tz_q(\rn)$,
$$\lf\|S_\az(f)\r\|_{\cm^\tz_q(\rn)}\le C\az^n\|f\|_{\cm^\tz_q(\rn)}.$$

\item[\rm(iv)] Assume that $p(\cdot)\in\mathcal{P}(\rn)$ satisfies $1<p_-\le p_+< \fz$ with
$p_-$ and $p_+$ as in \eqref{1.7}, and that $\cm$ is bounded on $L^{p(\cdot)}(\rn)$.
Then there exists a positive constant $C$, independent of $\az$, such that, for any $f\in L^{p(\cdot)}(\rn)$,
\begin{equation*}
\lf\|S_\az(f)\r\|_{L^{p(\cdot)}(\rn)}\le C\az^n\|f\|_{L^{p(\cdot)}(\rn)}.
\end{equation*}
\end{itemize}
\end{theorem}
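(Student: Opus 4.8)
The plan is to derive Theorem \ref{t2.3} from the general two-weight criterion in Theorem \ref{t1.2}, exactly as Theorem \ref{t2.1} was derived. Fix $f\in L^\fz_{\rm c}(\rn)$ and a constant $\nu\in(0,1)$, and set $F:=S_\az(f)$. The key observation is that, for any ball $B$ of $\rn$, one can split $f=f\mathbf{1}_{8B}+(f-f\mathbf{1}_{8B})$ and then use the fact that the Lusin area function is \emph{subadditive}: $S_\az(f)\le S_\az(f\mathbf{1}_{8B})+S_\az(f-f\mathbf{1}_{8B})$ pointwise. Accordingly I would set $F_B:=S_\az(f\mathbf{1}_{8B})$ and $R_B:=S_\az(f-f\mathbf{1}_{8B})$, so that $|F|\le|F_B|+|R_B|$ on $B$ trivially. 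It then suffices to verify the hypotheses \eqref{1.4} and \eqref{1.5} of Theorem \ref{t1.2} with the parameters $p_1:=\nu$, $p_2:=1$, $p_3:=\fz$, $\gz=0$, and $\epsilon:=0$; since $\epsilon=0$, the smallness condition on $\epsilon$ and $\kappa$ in Theorem \ref{t1.1} (hence \eqref{1.6}) is automatic, and Theorem \ref{t1.2} then yields (i), while (ii), (iii), and (iv) follow from the corresponding parts of Theorem \ref{t1.2} together with the fact that $q\in(1,\fz)$ is arbitrary and $(q/p_2)'=q'$, $(p_3/q)'=1$, matching the assumptions on $\Phi$, $\omega$, $v$, and $p(\cdot)$ imposed in Theorem \ref{t2.3} (which are identical to those in Theorem \ref{t2.1}).

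The condition \eqref{1.5} is the easy one: with $F_B=S_\az(f\mathbf{1}_{8B})$, since $S_\az$ is bounded from $L^1(\rn)$ to $L^{1,\,\fz}(\rn)$ with operator norm controlled by $C\az^n$, the Kolmogorov inequality gives, for $\nu\in(0,1)$,
\begin{align*}
\lf(\fint_B|F_B|^\nu\,dx\r)^{\frac1\nu}
&\ls\az^n\frac{1}{|B|}\int_{8B}|f|\,dy
\ls\az^n\fint_{8B}|f|\,dy
\le C_2\,\cm(|f|)(x_2)
\end{align*}
for any $x_2\in B$, which is \eqref{1.5} with $\epsilon=0$ (the $F$-term does not appear). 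Here I use that $\cm(|f|)(x_2)\ge\fint_{8B}|f|\,dy$ since $8B\ni x_2$. For \eqref{1.4} I would estimate $R_B=S_\az(f-f\mathbf{1}_{8B})$ pointwise on $B$ using the standard size estimate for the area-function kernel: for $x\in B$ and $y\notin 8B$, the relevant integration in the definition of $S_\az$ is effectively over $t\gtrsim|x-x_B|$-scale regions, and one obtains the pointwise bound $R_B(x)\ls\az^n\cm(|f|)(x)$ (uniformly in $x\in B$, in fact by the usual truncation-of-the-cone argument one even gets the bound at the center $x_1$), which is \eqref{1.4} with $p_1=\nu$, $p_2=1$, $p_3=\fz$, and $C_1\sim\az^n$; the $F$-term is again absent.

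The main obstacle is the pointwise control of the ``far'' part $R_B=S_\az(f-f\mathbf{1}_{8B})$ on $B$ by $\az^n\cm(|f|)$. Unlike the Calder\'on--Zygmund case, the area function involves a two-parameter integral over the cone $\Gamma_\az(x)$, so one must carefully track how the aperture $\az$ enters: the cone $\Gamma_\az(x)$ has width $\az t$ at height $t$, so the effective kernel $\int_{|z-y|<\az t}|\phi_t(z-y)|^2\,dz$ behaves like $t^{-n}$ times a factor depending on $\az$, and summing the dyadic-in-$t$ pieces against the decay of $\phi$ (rapid decay from $\phi\in\mathcal{S}(\rn)$, or just enough polynomial decay) must produce exactly an $\az^n$ prefactor and a convergent geometric-type series in the annuli $2^k B\setminus 2^{k-1}B$. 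I expect this to be a routine but slightly delicate estimate; once it is in place, everything else is a direct citation of Theorem \ref{t1.2}. (For $g(f)$ in Theorem \ref{t2.2} the same scheme applies with the simpler single-parameter kernel and no $\az$; for $g^\ast_\lz(f)$ one additionally uses the weight $(t/(t+|y|))^{\lz n}$ to make the analogous far-part estimate converge, which is where the restriction on $\lz$ will be used.)
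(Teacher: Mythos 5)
Your overall scheme (apply Theorem \ref{t1.2} with $F_B$ the local part, $R_B$ the far part, $p_1:=\nu\in(0,1)$, $p_2:=1$, $p_3:=\fz$, $\epsilon:=0$, verifying \eqref{1.5} by the weak $(1,1)$ bound of the area function with norm $\ls\az^n$ plus the Kolmogorov inequality) is exactly the paper's, and that half is fine. The gap is in your verification of \eqref{1.4}: you claim the pointwise bound $R_B(x)=S_\az(f\mathbf{1}_{\rn\backslash(8B)})(x)\ls\az^n\cm(f)(x_1)$ for $x,x_1\in B$, ``with the $F$-term absent''. This is false, because for $t\gs r_B$ the support restriction to $\rn\backslash(8B)$ produces no decay in $t$, and a square function is not pointwise dominated by the Hardy--Littlewood maximal function. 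Concretely, in dimension one let $\psi\ge0$ be a smooth bump supported in $(-1/8,1/8)$ with integral one, $\phi:=\psi(\cdot+3/4)-\psi(\cdot-3/4)\in\cs(\rr)$ (mean zero), $B:=(-101,-99)$, and $f_R:=\mathbf{1}_{[0,R]}\in L^\fz_{\rm c}(\rr)$, so that $f_R\mathbf{1}_{(8B)^\complement}=f_R$ and $\cm(f_R)\le1$ on $B$. One checks $\phi_t\ast f_R(y)=1$ whenever $|y-x|<t/4$ and $8|x|<t<R/8$, hence $S_\az(f_R)(x)\ge S_1(f_R)(x)\gs[\log R]^{1/2}$ for $x\in B$ and $R$ large; so no constant independent of $f$ can make your claimed estimate true, and the verification of \eqref{1.4} collapses. (The same misconception would also break your parenthetical plan for $g(f)$ and even for Calder\'on--Zygmund operators: $\int_{\rn\backslash(8B)}|f(z)||x-z|^{-n}\,dz$ is likewise not controlled by $\cm(f)(x)$.)

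What the paper does instead, and what your argument is missing, is an \emph{oscillation} estimate combined with an infimum trick, which is precisely why the term $[\cm(|F|^{p_1})(x_1)]^{1/p_1}$ is present in \eqref{1.4}. One shows that for $x,y\in B$, $|R_B(x)|\le R_B(y)+D(x,y)$ where the difference term $D(x,y)$ is $\ls\az^n\cm(f)(x_1)$; to make this work the paper replaces $S_\az$ by the smoothly truncated version $\wz{S}_\az$ built from a cutoff $\Phi\in C^\fz_{\rm c}(\rn)$ with $\mathbf{1}_{B(\mathbf{0},1)}\le\Phi\le\mathbf{1}_{B(\mathbf{0},2)}$, so that $|\Phi(\frac{x-z}{\az t})-\Phi(\frac{y-z}{\az t})|\ls\frac{r_B}{\az t}$ and vanishes unless $t\gs2^kr_B/\az$ on the $k$-th annulus; the dyadic sums in \eqref{4.12}--\eqref{4.17} then converge and produce the $\az^n$ factor, and the comparison $S_\az\le\wz{S}_\az\le S_{2\az}$ in \eqref{4.8} transfers the conclusion back to $S_\az$. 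Taking the infimum over $y\in B$ and using $\inf_{y\in B}R_B(y)\le(\fint_B|R_B|^\nu\,dy)^{1/\nu}\le(\fint_B|F|^\nu\,dy)^{1/\nu}+(\fint_B|F_B|^\nu\,dy)^{1/\nu}$ yields $\|R_B\|_{L^\fz(B)}\ls[\cm(|F|^\nu)(x_2)]^{1/\nu}+\cm(\az^nf)(x_1)$, i.e., \eqref{1.4} holds but only with the $\cm(|F|^\nu)$ term, which Theorem \ref{t1.2} permits. Without this step (or some substitute for it), your proof does not go through; with it, the rest of your reduction to Theorem \ref{t1.2} is exactly the paper's argument.
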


The proofs of Theorems \ref{t2.2} and \ref{t2.3} are similar to that of Theorem \ref{t2.1}.
Precisely, using the boundedness of the Littlewood-Paley $g$-function and the Lusin area function
from $L^1(\rn)$ to $L^{1,\,\fz}(\rn)$, and the Kolmogorov inequality, and applying Theorem \ref{t1.2},
we show Theorems \ref{t2.2} and \ref{t2.3} in Section \ref{s4}.

Applying Theorem \ref{t2.3} and the poinwise estimate
\begin{equation}\label{2.1}
g^\ast_\lz(f)\le S(f)+\sum_{k=0}^\fz2^{-k\lz n/2}S_{2^{k+1}}(f)
\end{equation}
(see, for instance, \cite[p.\,786]{l14}),
we obtain the following (two-weight) boundedness estimates for the Littlewood--Paley $g^\ast_\lz$-function.

\begin{corollary}\label{c2.1}
Let $q\in(1,\fz)$, $\lz\in(2,\fz)$, $\Phi$, $\omega$, and $v$ be as in Theorem \ref{t2.1}.

\begin{itemize}
\item[\rm(i)] Then there exists a positive constant $C$ such that, for any $f\in L^q_v(\rn)$,
\begin{equation*}
\lf\|g^\ast_\lz(f)\r\|_{L^q_\omega(\rn)}\le C\|f\|_{L^q_v(\rn)}.
\end{equation*}

\item[\rm(ii)] Let $t\in(0,\fz]$. Assume further that there exists a $q_0\in(1,q)$ such that
$$\int_c^\fz\lf[\frac{t^{q_0'}}{\Phi(t)}\r]^{q_0-1}\frac{dt}{t}<\fz
$$
for some constant $c\in(0,\fz)$, and the weights $\omega$ and $v$ satisfy
\begin{equation}\label{2.2}
\sup_{B\subset\rn}\lf(\fint_B \omega\,dx\r)\lf\|v^{-\frac{1}{q_0}}\r\|^{q_0}_{\Phi,\,B}<\fz
\end{equation}
with the supremum taken over all balls $B$ of $\rn$. Then there exists a positive constant $C$ such that,
for any $f\in L^{q,\,t}_v(\rn)$,
\begin{equation*}
\lf\|g^\ast_\lz(f)\r\|_{L^{q,\,t}_\omega(\rn)}\le C\|f\|_{L^{q,\,t}_v(\rn)}.
\end{equation*}

\item[\rm(iii)] Let $t\in(0,\fz]$ and $\tz\in(0,n]$.
Then there exists a positive constant $C$ such that, for any $f\in L^{q,\,t;\,\tz}(\rn)$,
\begin{equation*}
\lf\|g^\ast_\lz(f)\r\|_{L^{q,\,t;\,\tz}(\rn)}\le C\|f\|_{L^{q,\,t;\,\tz}(\rn)}.
\end{equation*}

In particular, there exists a positive constant $C$ such that,
for any $f\in \cm^\tz_q(\rn)$,
$$\lf\|g^\ast_\lz(f)\r\|_{\cm^\tz_q(\rn)}\le C\|f\|_{\cm^\tz_q(\rn)}.$$

\item[\rm(iv)] Assume that $p(\cdot)\in\mathcal{P}(\rn)$ satisfies $1<p_-\le p_+< \fz$
with $p_-$ and $p_+$ as in \eqref{1.7}, and that $\cm$ is bounded on $L^{p(\cdot)}(\rn)$.
Then there exists a positive constant $C$ such that, for any $f\in L^{p(\cdot)}(\rn)$,
\begin{equation*}
\lf\|g^\ast_\lz(f)\r\|_{L^{p(\cdot)}(\rn)}\le C\|f\|_{L^{p(\cdot)}(\rn)}.
\end{equation*}
\end{itemize}
\end{corollary}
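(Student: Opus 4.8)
The plan is to reduce everything to Theorem \ref{t2.3} via the pointwise estimate \eqref{2.1}. Fix $\lz\in(2,\fz)$ so that the geometric factor $\sum_{k=0}^\fz 2^{-k\lz n/2}\cdot 2^{kn}$ converges (note $\lz>2$ forces $\lz n/2 > n$, hence the series $\sum_{k=0}^\fz 2^{-k\lz n/2}(2^{k+1})^n$ is a convergent geometric series, whose sum we denote $C_\lz\in(0,\fz)$). For each of the four target norms $\|\cdot\|_X$ appearing in (i)--(iv) — namely $\|\cdot\|_{L^q_\omega(\rn)}$, $\|\cdot\|_{L^{q,\,t}_\omega(\rn)}$, $\|\cdot\|_{L^{q,\,t;\,\tz}(\rn)}$ (and its special case $\|\cdot\|_{\cm^\tz_q(\rn)}$), and $\|\cdot\|_{L^{p(\cdot)}(\rn)}$ — the key structural fact I would use is the (quasi-)triangle inequality in the form $\|\sum_j h_j\|_X\le C\sum_j\|h_j\|_X$; for $q>1$ all these are genuine norms (or, in the Lorentz case with $t<1$, quasi-norms with a uniformly controlled constant), so this step is routine.

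Thus for part (i): apply $\|\cdot\|_{L^q_\omega(\rn)}$ to \eqref{2.1}, use subadditivity to get
\begin{align*}
\lf\|g^\ast_\lz(f)\r\|_{L^q_\omega(\rn)}
\le\lf\|S(f)\r\|_{L^q_\omega(\rn)}+\sum_{k=0}^\fz 2^{-k\lz n/2}\lf\|S_{2^{k+1}}(f)\r\|_{L^q_\omega(\rn)},
\end{align*}
then invoke Theorem \ref{t2.3}(i) — which gives $\|S_\az(f)\|_{L^q_\omega(\rn)}\le C\az^n\|f\|_{L^q_v(\rn)}$ with $C$ \emph{independent of} $\az$, and the same bound for $S=S_1$ — to dominate this by
\begin{align*}
C\lf[1+\sum_{k=0}^\fz 2^{-k\lz n/2}\lf(2^{k+1}\r)^n\r]\|f\|_{L^q_v(\rn)}=CC_\lz\|f\|_{L^q_v(\rn)}.
\end{align*}
Parts (ii), (iii) (including the Morrey special case), and (iv) follow by the identical argument, replacing Theorem \ref{t2.3}(i) by Theorem \ref{t2.3}(ii), (iii), and (iv) respectively; in each case the $\az$-independence of the constant in Theorem \ref{t2.3} together with the factor $\az^n=(2^{k+1})^n$ is precisely what makes the series summable, and the hypotheses on $q,\Phi,\omega,v$ (and, in (ii), on $q_0$ and \eqref{2.2}; in (iii), on $\tz$; in (iv), on $p(\cdot)$ and boundedness of $\cm$ on $L^{p(\cdot)}(\rn)$) are inherited verbatim from the corresponding part of Theorem \ref{t2.3}.

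The only point requiring a little care — and the closest thing to an obstacle — is justifying the term-by-term application of the norm to the infinite sum in \eqref{2.1}: one should first establish the estimate for partial sums $S(f)+\sum_{k=0}^N 2^{-k\lz n/2}S_{2^{k+1}}(f)$, obtaining a bound uniform in $N$, and then pass to the limit using monotone convergence (the summands are non-negative), which shows both that $g^\ast_\lz(f)\in X$ and that the claimed inequality holds. In the variable Lebesgue space case (iv) one uses instead that the Luxembourg norm is order-continuous on increasing non-negative sequences, which again is standard (see \cite{cf13}). No new ideas beyond Theorem \ref{t2.3} are needed; the corollary is genuinely a corollary.
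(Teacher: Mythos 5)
Your handling of (i) and (iv) is exactly the paper's: apply \eqref{2.1} termwise, use the $\az$-uniform bound $C\az^n$ from Theorem \ref{t2.3}(i), resp.\ (iv), and the Minkowski inequality in $L^q_\omega(\rn)$, resp.\ $L^{p(\cdot)}(\rn)$, the series converging because $\lz\in(2,\fz)$. The gap is in (ii) and (iii). There you assert countable subadditivity $\|\sum_j h_j\|_X\le C\sum_j\|h_j\|_X$ with a ``uniformly controlled constant'' also for $L^{q,\,t}_\omega(\rn)$ and $L^{q,\,t;\,\tz}(\rn)$ with $t<1$; these are only quasi-normed, and iterating the quasi-triangle inequality over $N$ summands produces a constant of order $K^N$, so no bound uniform in the number of terms (and hence no monotone-convergence passage to the full series) comes for free. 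This is precisely the obstruction the paper flags: it states explicitly that (ii) and (iii) of Corollary \ref{c2.1} \emph{cannot} be obtained directly from (ii) and (iii) of Theorem \ref{t2.3} together with \eqref{2.1}, because these spaces need not be Banach. Your remark about partial sums and monotone convergence addresses only the limit passage, not the real issue, which is the dependence of the subadditivity constant on the number of summands.

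The paper's route for (ii) is genuinely different: from $\omega\in A_\fz(\rn)$ and \eqref{2.2} one gets a power-bump condition with $\omega^{s_0}$ for some $s_0\in(1,\fz)$, hence by \cite[Theorem 6.13]{cmp11} a single weight $u\in A_p(\rn)$ for every $p\in(q_0,\fz)$ with $c_1\omega\le u\le c_2v$; the already proved one-weight case of Corollary \ref{c2.1}(i) applied with $u$ at the exponents $q-\uc_0$ and $q+\uc_0$ (with $\uc_0\in(0,q-q_0)$), together with the Marcinkiewicz interpolation theorem in Lorentz spaces, yields the $L^{q,\,t}_v(\rn)\to L^{q,\,t}_\omega(\rn)$ bound; (iii) then follows by choosing the special weights $\omega_x$ as in the proof of Theorem \ref{t1.2}(iii) and invoking (ii). If you wish to keep your termwise strategy, it can be repaired by the standard power trick: write $\|h\|_{L^{q,\,t}_\omega(\rn)}=\||h|^r\|^{1/r}_{L^{q/r,\,t/r}_\omega(\rn)}$ with $r\in(0,1)$ so small that $q/r>1$ and $t/r\ge1$ (so that the latter space is normable), use $(a+b)^r\le a^r+b^r$ on \eqref{2.1}, and note that $\sum_{k}2^{-rkn(\lz/2-1)}<\fz$ since $\lz>2$; the same device works ball by ball for the Lorentz--Morrey norm. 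As written, however, your proof of (ii) and (iii) is incomplete.
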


By Theorem \ref{t2.3}(i), \eqref{2.1}, and the Minkowski inequality
for the space $L^q_v(\rn)$, we immediately obtain Corollary \ref{c2.1}(i).
Moreover, from Theorem \ref{t2.3}(iv), \eqref{2.1}, and the Minkowski inequality
for the space $L^{p(\cdot)}(\rn)$, we deduce that
Corollary \ref{c2.1}(iv) holds true. However, since some weighted Lorentz
spaces and Lorentz--Morrey spaces may not be Banach spaces, it follows that (ii)
and (iii) of Corollary \ref{c2.1} can not be directly obtained
via using (ii) and (iii) of Theorem \ref{t2.3}, and the estimate \eqref{2.1}.

Indeed, (ii) of Corollary \ref{c2.1} can be proved via using Corollary \ref{c2.1}(i).
More precisely, by $\omega\in A_\fz(\rn)$ and the assumption \eqref{2.2}, we know that
there exists an $s_0\in(1,\fz)$ such that
$$\sup_{B\subset\rn}\lf(\fint_B \omega^{s_0}\,dx\r)^{\frac1{s_0}}
\lf\|v^{-\frac{1}{q_0}}\r\|^{q_0}_{\Phi,\,B}<\fz,
$$
which, combined with \cite[Theorem 6.13]{cmp11}, implies that
there exist a weight $u\in A_p(\rn)$ for any $p\in(q_0,\fz)$, and positive constants $c_1$ and $c_2$
such that $c_1\omega\le u\le c_2v$. From the special case of Corollary \ref{c2.1}(i),
we deduce that, for any $f\in L^{q-\uc_0}_u(\rn)$ and $h\in L^{q+\uc_0}_u(\rn)$,
$\|g^\ast_\lz(f)\|_{L^{q-\uc_0}_u(\rn)}\ls\|f\|_{L^{q-\uc_0}_u(\rn)}$ and
$\|g^\ast_\lz(h)\|_{L^{q+\uc_0}_u(\rn)}\ls\|h\|_{L^{q+\uc_0}_u(\rn)}$, where $\uc_0\in(0,q-q_0)$
is a constant, which, together with $c_1\omega\le u\le c_2v$, further implies that,
for any $f\in L^{q-\uc_0}_u(\rn)$ and $h\in L^{q+\uc_0}_u(\rn)$,
$\|g^\ast_\lz(f)\|_{L^{q-\uc_0}_\omega(\rn)}\ls\|f\|_{L^{q-\uc_0}_v(\rn)}$ and
$\|g^\ast_\lz(h)\|_{L^{q+\uc_0}_\omega(\rn)}\ls\|h\|_{L^{q+\uc_0}_v(\rn)}$.
By this and the Marcinkiewicz interpolation theorem of sublinear operators
in the scale of Lorentz spaces (see, for instance, \cite[Theorem 1.4.19]{g14}),
we conclude that Corollary \ref{c2.1}(ii) holds true. Choosing
the special weights $\omega$ and $v$, and using
Corollary \ref{c2.1}(ii), we obtain Corollary \ref{c2.1}(iii).

\begin{remark}\label{r2.3}
\begin{itemize}
\item[\rm(i)] The conclusion of Corollary \ref{c2.1}(i) was obtained by Cruz-Uribe and
P\'erez \cite[Theorem 1.7]{cp02} via a different method from that used in this article.
Indeed, Corollary \ref{c2.1}(i) was proved in \cite[Theorem 1.7]{cp02} by using the sharp maximal function
control for the Littlewood--Paley $g^\ast_\lz$-function.
We also point out that the range $\lz\in(2,\fz)$ for the Littlewood--Paley $g^\ast_\lz$-function
in Corollary \ref{c2.1}(i) coincides with that of \cite[Theorem 1.7]{cp02}.
Furthermore, it is worth pointing out that the one-weight boundedness of the Littlewood--Paley $g^\ast_\lz$-function
on $L^p_\omega(\rn)$, with $p\in(1,\fz)$ and $\omega\in A_p(\rn)$,
was established by Muckenhoupt and Wheeden \cite{mw74} (see also \cite{l14}).

Moreover, a two-weight boundedness of the Littlewood--Paley $g^\ast_\lz$-function
in the scale of weighted Lebesgue spaces was obtained in \cite{clx18} under the different
assumption on the weights $\omega$ and $v$. To the best of our knowledge, the two-weight boundedness of
the Lusin area function $S_\az$ in Theorem \ref{t2.3}(i) is new.
\item[\rm(ii)] To the best of our knowledge, the (two-weight) boundedness of
the Lusin area function $S_\az$ and the Littlewood--Paley $g^\ast_\lz$-function in the scale of weighted Lorentz spaces
and Lorentz--Morrey spaces presented in both (ii) and (iii) of Theorem \ref{t2.3} and (ii)
and (iii) of Corollary \ref{c2.1} is new.
\item[\rm(iii)] The boundedness of the Lusin area function $S_\az$ and
the Littlewood--Paley $g^\ast_\lz$-function on variable Lebesgue spaces is a simple corollary of
the one-weight boundedness of the Lusin area function $S_\az$ and
the Littlewood--Paley $g^\ast_\lz$-function in the scale of weighted Lebesgue spaces (see, for instance, \cite{l14,l11,w08}),
and the Rubio de Francia extrapolation theorem (see, for instance, \cite{cmp11}).
Thus, the conclusions of Theorem \ref{t2.3}(iv) and Corollary \ref{c2.1}(iv) are known.
\end{itemize}
\end{remark}

\begin{definition}\label{d2.3}
Let $\az\in(0,n)$ and $f\in\cs(\rn)$. The \emph{fractional integral}
$I_\az(f)$ is defined by setting, for any $x\in\rn$,
$$I_\az(f)(x)=c_{(\az,\,n)}\int_\rn\frac{f(y)}{|x-y|^{n-\az}}\,dy,
$$
where $c_{(\az,\,n)}$ is a positive constant depending only on $\az$ and $n$.
\end{definition}

Applying Theorem \ref{t1.3}, we obtain the following (two-weight) boundedness for fractional integral
operators.

\begin{theorem}\label{t2.4}
Let $\az\in(0,n)$, $q\in(1,\fz)$, $p\in(1,\frac{n}{\az})$ satisfy $\frac{1}{q}=\frac{1}{p}-\frac{\az}{n}$,
and $\Phi$ be a doubling Young function satisfying
$$\int_c^\fz\lf[\frac{t^{p'}}{\Phi(t)}\r]^{p-1}\frac{dt}{t}<\fz
$$
for some constant $c\in(0,\fz)$. Assume that the weights $\omega$ and $v$ satisfy
that $\omega\in A_\fz(\rn)$ and
$$\sup_{B\subset\rn}\lf(\fint_B \omega\,dx\r)\lf\|v^{-\frac{1}{p}}\r\|^{q}_{\Phi,\,B}<\fz,
$$
where the supremum is taken over all balls $B$ of $\rn$.
\begin{itemize}
\item[\rm(i)] Then $I_\az$ is bounded from $L^{p}_v(\rn)$ to $L^q_\omega(\rn)$,
and there exists a positive constant $C$ such that, for any $f\in L^{p}_v(\rn)$,
\begin{equation*}
\lf\|I_\az(f)\r\|_{L^q_\omega(\rn)}\le C\|f\|_{L^{p}_v(\rn)}.
\end{equation*}

\item[\rm(ii)] Let $\tz\in(0,n]$ and $\wz{\tz}:=n-\frac{p}{q}(n-\tz)$.
Then $I_\az$ is bounded from $\cm^{\wz{\tz}}_p(\rn)$ to $\cm^\tz_q(\rn)$,
and there exists a positive constant $C$ such that, for any $f\in \cm^{\wz{\tz}}_p(\rn)$,
\begin{equation*}
\lf\|I_\az(f)\r\|_{\cm^\tz_q(\rn)}\le C\|f\|_{\cm^{\wz{\tz}}_p(\rn)}.
\end{equation*}
\end{itemize}
\end{theorem}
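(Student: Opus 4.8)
The plan is to apply Theorem \ref{t1.3} with the parameter choice $p_1:=\nu$ for some fixed $\nu\in(0,1)$, $p_2:=1$, $p_3:=\fz$, $\gz:=\az/n$, and $\epsilon:=0$, so that the exponents $q$ and $p$ in Theorem \ref{t1.3} match those in the present statement (indeed $\frac1p=\frac1q+\frac{\gz}{p_2}=\frac1q+\frac{\az}{n}$, and $q(1-\gz)=q(1-\az/n)=q\cdot\frac{p}{q}\cdot\frac np\cdot\frac1{n}\cdot n$... more simply $q(1-\az/n)>1$ follows from $\frac1q<1-\frac\az n=\frac np\cdot\frac1n$, i.e.\ from $p<\frac n\az$). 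Since $p_3=\fz$, the condition $s\in((\frac{p_3}{q})',\fz]=(1,\fz]$ is implied by $\omega\in A_\fz(\rn)=\bigcup_{s\in(1,\fz]}RH_s(\rn)$, and the Young-function hypothesis of Theorem \ref{t1.3}(i) with $p_2=1$ is exactly $\int_c^\fz[t^{p'}/\Phi(t)]^{p-1}\,\frac{dt}{t}<\fz$, while the bump condition \eqref{1.9} with $p_2=1$, $p_2/p=1/p$, $q/p_2=q$ is exactly the hypothesis $\sup_B(\fint_B\omega)\|v^{-1/p}\|_{\Phi,\,B}^q<\fz$. Thus everything reduces to producing, for $F:=I_\az(f)$ with $f\in L^\fz_{\mathrm c}(\rn)$, a decomposition $F=F_B+R_B$ on each ball $B$ satisfying \eqref{1.4} and \eqref{1.5}; by the density of $L^\fz_{\mathrm c}(\rn)$ and a standard limiting argument the norm inequality then extends to all of $L^p_v(\rn)$.

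For the decomposition, fix a ball $B=B(x_B,r_B)$ and set $F_B:=I_\az(f\mathbf 1_{8B})$ and $R_B:=I_\az(f\mathbf 1_{(8B)^\complement})$, so $F=F_B+R_B$ everywhere. First I would prove \eqref{1.5}: since $I_\az$ maps $L^1(\rn)$ to $L^{n/(n-\az),\,\fz}(\rn)$, the Kolmogorov inequality gives, for $p_1=\nu<n/(n-\az)$,
\begin{align*}
\lf(\fint_B|F_B|^{\nu}\,dx\r)^{\frac1\nu}
\ls |B|^{-\frac1\nu+\frac{n-\az}{n}\cdot\frac1\nu\cdot\nu}\,\lf\|I_\az(f\mathbf 1_{8B})\r\|_{L^{n/(n-\az),\,\fz}(\rn)}^{\,?}
\end{align*}
— more precisely, Kolmogorov's inequality yields $(\fint_B|F_B|^\nu)^{1/\nu}\ls |B|^{-1/\nu}\|F_B\|_{L^{n/(n-\az),\,\fz}(B)}\ls |B|^{-1/\nu}\,|B|^{1/\nu-(n-\az)/n}\|f\mathbf 1_{8B}\|_{L^1(\rn)}\sim |B|^{-(n-\az)/n}\int_{8B}|f|\,dy = |8B|^{\az/n}\fint_{8B}|f|\,dy\ls\cm_\gz(|f|)(x_2)$ for any $x_2\in B$, which is \eqref{1.5} with $\epsilon=0$, $p_2=1$. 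For \eqref{1.4} I would estimate $R_B$ pointwise on $B$: for $x\in B$ and $y\in(8B)^\complement$ one has $|x-y|\sim|x_B-y|$, so $|R_B(x)|\ls\int_{(8B)^\complement}\frac{|f(y)|}{|x_B-y|^{n-\az}}\,dy=\sum_{k\ge3}\int_{2^{k+1}B\setminus 2^kB}\frac{|f(y)|}{|x_B-y|^{n-\az}}\,dy\ls\sum_{k\ge3}(2^kr_B)^{\az-n}\int_{2^{k+1}B}|f|\,dy\ls\sum_{k\ge3}|2^{k+1}B|^{\az/n}\fint_{2^{k+1}B}|f|\,dy\ls\cm_\gz(|f|)(x)$, uniformly in $x\in B$; taking the $L^{p_3}=L^\fz$ norm over $B$ gives \eqref{1.4} (with no $\cm(|F|^{p_1})$ term even needed). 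Once \eqref{1.4} and \eqref{1.5} hold with $\epsilon=0$, the good-$\lz$ inequality \eqref{1.6} of Theorem \ref{t1.1} is available, and Theorem \ref{t1.3}(i) delivers $\|I_\az(f)\|_{L^q_\omega(\rn)}\ls\|f\|_{L^p_v(\rn)}$, which is part (i).

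For part (ii), I would simply invoke Theorem \ref{t1.3}(ii): with the same choices $p_1=\nu$, $p_2=1$, $p_3=\fz$, $\gz=\az/n$, and the Morrey exponent $\tz\in(0,n]=(\frac{nq}{p_3},n]$ (note $\frac{nq}{p_3}=0$ since $p_3=\fz$), the decomposition just constructed satisfies the hypotheses, and Theorem \ref{t1.3}(ii) gives $\|I_\az(f)\|_{\cm^\tz_q(\rn)}\ls\|f\|_{\cm^{\wz\tz}_p(\rn)}$ with $\wz\tz=n-\frac pq(n-\tz)$, again extended from $L^\fz_{\mathrm c}(\rn)$ to $\cm^{\wz\tz}_p(\rn)$ by density. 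The \textbf{main obstacle} is the bookkeeping in verifying that the abstract parameter conditions of Theorem \ref{t1.3} (the relations among $q,p,\gz,p_2$, the range of $s$, and the two Young-function/bump conditions) collapse exactly to the hypotheses stated here when $p_2=1$, $p_3=\fz$, $\gz=\az/n$; the analytic content — the Kolmogorov inequality for the weak-type $L^1\to L^{n/(n-\az),\fz}$ bound of $I_\az$ and the elementary annular decomposition for the tail $R_B$ — is routine and essentially identical to the argument sketched after Theorem \ref{t2.1} for Calder\'on--Zygmund operators.
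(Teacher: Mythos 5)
Your overall strategy is the one the paper uses (apply Theorem \ref{t1.3} with $p_1:=\nu\in(0,1)$, $p_2:=1$, $p_3:=\fz$, $\gz:=\az/n$, $\epsilon:=0$, decompose $F=I_\az(f\mathbf 1_{8B})+I_\az(f\mathbf 1_{(8B)^\complement})$, and handle $F_B$ by the weak $L^1\to L^{\frac{n}{n-\az},\fz}$ bound plus Kolmogorov), but your verification of \eqref{1.4} contains a genuine error. The chain
$|R_B(x)|\ls\sum_{k\ge3}(2^kr_B)^{\az-n}\int_{2^{k+1}B}|f|\,dy\sim\sum_{k\ge3}|2^{k+1}B|^{\az/n}\fint_{2^{k+1}B}|f|\,dy\ls\cm_\gz(|f|)(x)$
fails at the last step: each summand is indeed $\ls\cm_{\az/n}(|f|)(x)$, but there is no geometric decay in $k$, so the sum of these terms is not controlled by the maximal function. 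Concretely, take $B:=B(\mathbf 0,1)$ and $f(y):=|y|^{-\az}\mathbf 1_{\{1\le|y|\le 2^K\}}(y)$; then for $x\in B$ one has $R_B(x)\sim\int_{8\le|y|\le2^K}|y|^{-n}\,dy\sim K$, while $\cm_{\az/n}(f)(x)\sim1$, so no pointwise bound $|R_B|\ls\cm_{\az/n}(f)$ with an absolute constant can hold. Your parenthetical remark that ``no $\cm(|F|^{p_1})$ term is even needed'' is exactly the symptom of this gap: the term $[\cm(|F|^{p_1})(x_1)]^{1/p_1}$ in \eqref{1.4} is genuinely needed for $I_\az$.

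The paper's proof repairs this in the standard way, and your argument can be fixed identically: instead of bounding $R_B$ directly, use the smoothness of the kernel, $|K(x,z)-K(y,z)|\ls|x-y|\,|x-z|^{\az-n-1}$ for $x,y\in B$ and $z\notin 8B$, so that the annular decomposition acquires a factor $2^{-k}$ and yields the oscillation bound $|R_B(x)-R_B(y)|\ls\cm_{\az/n}(f)(x_1)$ for all $x,y,x_1\in B$. Then
\begin{align*}
\|R_B\|_{L^\fz(B)}\ls\inf_{y\in B}|R_B(y)|+\cm_{\az/n}(f)(x_1)
\ls\lf(\fint_B|F|^{\nu}\,dz\r)^{\frac1\nu}+\lf(\fint_B|F_B|^{\nu}\,dz\r)^{\frac1\nu}+\cm_{\az/n}(f)(x_1)
\ls\lf[\cm\lf(|F|^{\nu}\r)(x_2)\r]^{\frac1\nu}+\cm_{\az/n}(f)(x_1),
\end{align*}
where the middle step uses the Kolmogorov estimate for $F_B$ that you already proved. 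This gives \eqref{1.4} with $p_3=\fz$, and the rest of your reduction to Theorem \ref{t1.3} (including part (ii) via Theorem \ref{t1.3}(ii) with $\frac{nq}{p_3}=0$) is correct and matches the paper.
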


We prove Theorem \ref{t2.4} in Section \ref{s4} below via using the boundedness of $I_\az$ from $L^1(\rn)$ to
$L^{\frac{n}{n-\az},\,\fz}(\rn)$, the Kolmogorov inequality, and Theorem \ref{t1.3}.

\begin{remark}\label{r2.4}
\begin{itemize}
\item[\rm(i)] The conclusion of Theorem \ref{t2.4}(i) is well known (see, for instance, \cite{p94,sw92}).
However, the proof of Theorem \ref{t2.4}(i) given in this article is different
from that used in \cite{p94,sw92}.
\item[\rm(ii)] Theorem \ref{t2.4}(ii) was established by Adams \cite{a73} (see also \cite[Theorem 7.1]{a15} and
\cite[Corollary 1.2]{sst11}).
We point out that the proof of Theorem \ref{t2.4}(ii) presented in this article is different from that used in
\cite{a15,a73,sst11}.
\end{itemize}
\end{remark}

\subsection{Applications to second-order elliptic operators of divergence form}\label{s2.2}

In this subsection, we establish the (two-weight) boundedness of Riesz transforms, Littlewood--Paley $g$-functions,
and fractional integral operators, associated with second-order divergence elliptic operators with complex
bounded measurable coefficients on $\rn$, in the scale of weighted Lebesgue spaces, weighted Lorentz spaces,
(Lorentz--)Morrey spaces, and variable Lebesgue spaces. We begin with recalling the
second-order divergence elliptic operators with complex
bounded measurable coefficients on $\rn$.

For any given $x\in\rn$, let $A(x):=\{a_{ij}(x)\}_{i,j=1}^n$ denote
an $n\times n$ matrix with complex-valued, bounded, measurable entries.
Then $A$ is said to satisfy the \emph{uniform ellipticity condition}
if there exists a positive constant $\mu_0\in(0,1]$ such that,
for any $x\in\rn$ and $\xi,\,\,\zeta\in\mathbb{C}^n$,
\begin{equation*}
\mu_0|\xi|^2\le\Re(A(x)\xi\cdot\overline{\xi})\quad \ \text{and}\quad \ |A(x)\xi\cdot\overline{\zeta}|\le \mu_0^{-1}
|\xi||\zeta|,
\end{equation*}
where $\Re(A(x)\xi\cdot\overline{\xi})$ denotes the \emph{real part} of $ A(x)\xi\cdot\xi$
and $\overline{\xi}$ denotes the \emph{conjugate vector} of $\xi$.
Denote by $L$ the \emph{maximal-accretive operator} on $L^2 (\rn)$ with largest domain
$D(L)\subset W^{1,2}(\rn)$ such that, for any $f\in D(L)$ and $g\in W^{1,2}(\rn)$,
\begin{equation*}
\langle Lf,g\rangle=\int_{\rn}A(x)\nabla f(x)\cdot\overline{\nabla g(x)}\,dx,
\end{equation*}
where $W^{1,2}(\rn)$ denotes the classical \emph{Sobolev space} on $\rn$ and $\nabla f$ the \emph{distributional
gradient} of $f$. In this sense, for any $f\in D(L)$, we write
\begin{equation}\label{2.3}
Lf:=-\divz(A\nabla f).
\end{equation}

It is well known that $-L$ generates a $C_0$-semigroup $\{e^{-tL}\}_{t>0}$ and $L$
has a bounded holomorphic functional calculus in $L^2(\rn)$ (see, for instance, \cite{a07}).
Moreover, the Riesz transform $\nabla L^{-1/2}$,
associated with the operator $L$, is defined by setting, for any $f\in L^2(\rn)$,
\begin{equation*}
\nabla L^{-1/2}(f):=\frac{1}{\pi^{1/2}}\int_0^\fz\nabla e^{-tL}(f)\frac{dt}{t^{1/2}}.
\end{equation*}

Let $(p_-(L),\,p_+(L))$ be the \emph{interior of the maximal interval of exponents}
 $p\in[1,\,\fz]$ for which the semigroup $\{e^{-tL}\}_{t>0}$ is bounded on $L^p(\rn)$.
Furthermore, denote by $(q_-(L),\,q_+(L))$ the \emph{interior of
the maximal interval of exponents} $p\in[1,\,\fz]$ for which the family of operators, $\{\sqrt t \nabla
e^{-tL}\}_{t>0}$, is bounded on $L^p(\rn)$.

\begin{remark}\label{r2.5}
Let the operator $L$ be as in \eqref{2.3}.
It is well known that, when $n\in\{1,\,2\}$, $p_-(L)=1$ and $p_+(L)=\fz$;
when $n\in[3,\fz)\cap\nn$, $p_-(L)\in[1,\frac{2n}{n+2})$ and $p_+(L)\in(\frac{2n}{n-2},\fz]$
(see, for instance, \cite[Corollary 3.6]{a07}).
Moreover, for $q_-(L)$ and $q_+(L)$,
it is also known that, when $n=1$, $q_-(L)=1$ and $q_+(L)=\fz$;
when $n=2$, $q_-(L)=1$ and $q_+(L)\in(2,\fz]$; when $n\in[3,\fz)\cap\nn$,
$q_-(L)\in[1,\frac{2n}{n+2})$ and $q_+(L)\in(2,\fz]$
(see, for instance, \cite[Section 3.4]{a07}).
\end{remark}

Then we have the following (two-weight) boundedness for the Riesz transform $\nabla L^{-1/2}$
in the scale of weighted Lebesgue spaces, weighted Lorentz spaces, (Lorentz--)Morrey spaces,
and variable Lebesgue spaces.

\begin{theorem}\label{t2.5}
Let $L$ be as in \eqref{2.3} and $q\in(q_-(L),\,q_+(L))$.
Assume that the weights $\omega$ and $v$ satisfy
that $\omega\in RH_s(\rn)$ with some $s\in((\frac{q_+(L)}{q})',\fz]$,
$v^{1-(\frac{q}{q_-(L)})'}\in A_\fz(\rn)$, and
\begin{equation}\label{2.4}
\lf[\omega,v^{1-(\frac{q}{q_-(L)})'}\r]_{A_{\frac{q}{q_-(L)}}(\rn)}:=\sup_{B\subset\rn}
\lf[\fint_B \omega\,dx\r]\lf[\fint_B v^{1-(\frac{q}{q_-(L)})'}\,dx\r]^{\frac{q}{q_-(L)}-1}<\fz,
\end{equation}
where the supremum is taken over all balls $B$ of $\rn$.
\begin{itemize}
\item[\rm(i)] Then $\nabla L^{-1/2}$ is bounded from $L^q_v(\rn)$ to $L^q_\omega(\rn)$,
and there exists a positive constant $C$ such that, for any $f\in L^q_v(\rn)$,
\begin{equation*}
\lf\|\nabla L^{-1/2}(f)\r\|_{L^q_\omega(\rn)}\le C\|f\|_{L^q_v(\rn)}.
\end{equation*}

\item[\rm(ii)] Let $t\in(0,\fz]$. Then $\nabla L^{-1/2}$ is bounded from $L^{q,\,t}_v(\rn)$ to $L^{q,\,t}_\omega(\rn)$,
and there exists a positive constant $C$ such that, for any $f\in L^{q,\,t}_v(\rn)$,
\begin{equation*}
\lf\|\nabla L^{-1/2}(f)\r\|_{L^{q,\,t}_\omega(\rn)}\le C\|f\|_{L^{q,\,t}_v(\rn)}.
\end{equation*}

\item[\rm(iii)] Let $t\in(0,\fz]$ and $\tz\in(\frac{nq}{q_+(L)},n]$.
Then $\nabla L^{-1/2}$ is bounded on $L^{q,\,t;\,\tz}(\rn)$, and there exists a positive constant $C$ such that,
for any $f\in L^{q,\,t;\,\tz}(\rn)$,
\begin{equation*}
\lf\|\nabla L^{-1/2}(f)\r\|_{L^{q,\,t;\,\tz}(\rn)}\le C\|f\|_{L^{q,\,t;,\tz}(\rn)}.
\end{equation*}
In particular, $\nabla L^{-1/2}$ is bounded on $\cm^{\tz}_q(\rn)$,
and there exists a positive constant $C$ such that, for any $f\in \cm^{\tz}_q(\rn)$,
\begin{equation*}
\lf\|\nabla L^{-1/2}(f)\r\|_{\cm^{\tz}_q(\rn)}\le C\|f\|_{\cm^{\tz}_q(\rn)}.
\end{equation*}

\item[\rm(iv)] Assume that $p(\cdot)\in\mathcal{P}(\rn)$ satisfies $q_-(L)<p_-\le p_+\le q_+(L)$
with $p_-$ and $p_+$ as in \eqref{1.7}, and that $\cm$ is bounded on $L^{p(\cdot)}(\rn)$.
Then $\nabla L^{-1/2}$ is bounded on $L^{p(\cdot)}(\rn)$, and there exists a positive constant $C$ such that,
for any $f\in L^{p(\cdot)}(\rn)$,
\begin{equation*}
\lf\|\nabla L^{-1/2}(f)\r\|_{L^{p(\cdot)}(\rn)}\le C\|f\|_{L^{p(\cdot)}(\rn)}.
\end{equation*}
\end{itemize}
\end{theorem}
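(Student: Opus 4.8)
The plan is to obtain Theorem \ref{t2.5} by feeding the pair $(F,f):=(\nabla L^{-1/2}(f),f)$ into the general criterion of Theorem \ref{t1.2}. By a standard density argument it suffices to prove the four inequalities for $f\in L^\fz_{\mathrm{c}}(\rn)$ (read as a priori estimates where density fails, e.g. in the Morrey scales), and for such $f$ the $L^p(\rn)$-boundedness of $\nabla L^{-1/2}$ for $p\in(q_-(L),q_+(L))$ (see \cite{a07,am06}) gives $F\in L^1_\loc(\rn)$. Fix $p_0\in(q_-(L),q)$ and $p_3\in(q,q_+(L))$ (to be specified at the very end) and set $p_1:=p_2:=p_0$ and $\gz:=0$. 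For a ball $B=B(x_B,r_B)$ of $\rn$ and a large integer $m$ (depending only on $n$ and the exponents), let
$$F_B:=\nabla L^{-1/2}\lf(I-e^{-r_B^2L}\r)^mf \quad\text{and}\quad R_B:=\nabla L^{-1/2}\lf[I-\lf(I-e^{-r_B^2L}\r)^m\r]f,$$
so that $F=F_B+R_B$ and hence $|F|\le|F_B|+|R_B|$ on $B$. Everything reduces to checking \eqref{1.4} and \eqref{1.5} for this splitting, with $\epsilon=0$ and with $C_1,C_2$ depending only on $n$ and the exponents.

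The point is that \eqref{1.4} and \eqref{1.5} for this splitting are, essentially, the off-diagonal estimates already proved in \cite{a07,am06,am08} on the way to the one-weight $L^p$-boundedness of $\nabla L^{-1/2}$. Concretely, write $f=\sum_{j\ge0}f\mathbf 1_{C_j(B)}$ with $C_0(B):=4B$ and $C_j(B):=2^{j+2}B\setminus2^{j+1}B$ for $j\ge1$. For the ``local'' part $F_B$, the $j=0$ term is handled by the uniform-in-$r_B$ boundedness on $L^{p_0}(\rn)$ of $\nabla L^{-1/2}(I-e^{-r_B^2L})^m$ (legitimate since $p_0\in(q_-(L),q_+(L))$), and the terms $j\ge1$ by expressing $\nabla L^{-1/2}(I-e^{-r_B^2L})^m$ as an average of $\{\sqrt t\,\nabla e^{-tL}\}_{t>0}$ against a kernel whose decay is governed by $m$, combined with the $L^{p_0}$--$L^{p_0}$ off-diagonal estimates for $\{\sqrt t\,\nabla e^{-tL}\}_{t>0}$; choosing $m$ large makes the $j$-series converge and yields $(\fint_B|F_B|^{p_0}\,dx)^{1/p_0}\ls[\cm(|f|^{p_0})(x)]^{1/p_0}$ for $x\in B$, i.e. \eqref{1.5} with $\epsilon=0$. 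For the ``good'' part $R_B$, expanding $I-(I-e^{-r_B^2L})^m$ into heat operators at scales $\sim r_B^2$ and using the $L^{p_0}$--$L^{p_3}$ off-diagonal estimates for $\{e^{-tL}\}_{t>0}$ (legitimate since $p_0\le p_3$ and $(q_-(L),q_+(L))\subseteq(p_-(L),p_+(L))$; see \cite{a07} and Remark \ref{r2.5}) together with the $L^{p_3}(\rn)$-boundedness of $\nabla L^{-1/2}$ (legitimate since $p_3<q_+(L)$) gives, by the same dyadic summation, $(\fint_B|R_B|^{p_3}\,dx)^{1/p_3}\ls[\cm(|f|^{p_0})(x)]^{1/p_0}$ for $x\in B$; this is \eqref{1.4}, the $\cm(|F|^{p_1})$-term not being needed here.

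It remains to match the weight hypotheses, after which Theorem \ref{t1.1} supplies \eqref{1.6} and Theorem \ref{t1.2} applies. Since $\omega\in RH_s(\rn)$ with $s>(\frac{q_+(L)}{q})'$, we may fix $p_3<q_+(L)$ so close to $q_+(L)$ that $s>(\frac{p_3}{q})'$; then $\tz\in(\frac{nq}{q_+(L)},n]$ also lies in $(\frac{nq}{p_3},n]$. Since \eqref{2.4} holds and $v^{1-(q/q_-(L))'}\in A_\fz(\rn)$, the resulting Sawyer testing condition and the attendant Orlicz (power/log) bump (see the discussion after \eqref{1.3} and \cite[Corollary 1.3]{p95}, \cite[Corollary 1.4]{pr15}, \cite{cmp11}) show that, for $p_0\in(q_-(L),q)$ close enough to $q_-(L)$, the bump condition \eqref{1.8} with $p_2=p_0$ holds for a suitable doubling Young function $\Phi$; the same margin of openness also provides the extra exponent $q_0\in(1,q/p_0)$ and bump demanded in part (ii). With these choices, parts (i), (ii), (iii) of Theorem \ref{t2.5} follow from the respective parts of Theorem \ref{t1.2}. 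Finally, the case $\omega=v$ of part (i) gives the one-weight estimate $\|\nabla L^{-1/2}f\|_{L^q_\omega(\rn)}\ls\|f\|_{L^q_\omega(\rn)}$ for all $q\in(q_-(L),q_+(L))$ and all $\omega\in A_{q/q_-(L)}(\rn)\cap RH_{(q_+(L)/q)'}(\rn)$; feeding this limited-range weighted family into the variable-exponent limited-range extrapolation theorem of Cruz-Uribe and Wang (\cite[Theorem 2.14]{cw17}; see also Lemma \ref{l3.7}) yields part (iv).

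The main obstacle is the higher-integrability (``good-part'') estimate \eqref{1.4}: it is there that one needs simultaneously the $L^{p_3}(\rn)$-boundedness of $\nabla L^{-1/2}$ for $p_3$ arbitrarily close to $q_+(L)$, the $L^{p_0}$--$L^{p_3}$ off-diagonal estimates for the heat semigroup throughout $(p_-(L),p_+(L))$, and a choice of $m$ large enough to sum the dyadic tails; a secondary, more clerical, difficulty is converting the concrete two-weight $A_{q/q_-(L)}$-plus-$A_\fz$ hypotheses of Theorem \ref{t2.5} into the abstract Young-function bump hypotheses of Theorem \ref{t1.2}, and doing so uniformly for the weighted Lebesgue, weighted Lorentz, Lorentz--Morrey, and variable Lebesgue scales at once.
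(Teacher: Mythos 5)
Your skeleton matches the paper's: the same splitting $F_B:=\nabla L^{-1/2}(I-e^{-r_B^2L})^m f$ and $R_B:=\nabla L^{-1/2}[I-(I-e^{-r_B^2L})^m]f$, the same reduction to Theorem \ref{t1.2} with $p_1=p_2=p_0\in(q_-(L),q)$, $p_3$ close to $q_+(L)$, $\epsilon=0$, and part (iv) via the one-weight case of (i) plus Lemma \ref{l3.7}. The genuine gap is in your verification of \eqref{1.4}. You claim $(\fint_B|R_B|^{p_3}\,dx)^{1/p_3}\ls[\cm(|f|^{p_0})(x)]^{1/p_0}$ and assert that ``the $\cm(|F|^{p_1})$-term is not needed''; but the route you give (off-diagonal $L^{p_0}$--$L^{p_3}$ estimates for $\{e^{-tL}\}_{t>0}$ combined with the global $L^{p_3}(\rn)$-boundedness of $\nabla L^{-1/2}$) cannot produce a bound by a maximal average of $f$ at a point of $B$: once the global operator $\nabla L^{-1/2}$ is applied you cannot relocalize to $B$, because for $q_+(L)<\fz$ neither $\nabla L^{-1/2}$ nor $\nabla L^{-1/2}e^{-sL}$ has pointwise kernel decay. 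If instead you decompose $f$ over annuli and use the representation $\nabla L^{-1/2}e^{-kr_B^2L}=c\int_0^\fz\nabla e^{-(t+kr_B^2)L}\,\frac{dt}{\sqrt t}$ together with the available $L^{p_0}$--$L^{p_3}$ off-diagonal bounds for $\nabla e^{-uL}$, the large-$t$ part of the integral carries no exponential gain in $j$, and the annulus $2^{j+2}B\setminus2^{j+1}B$ contributes on the order of $2^{jn/p_3}[\cm(|f|^{p_0})(x)]^{1/p_0}$, so the dyadic sum diverges. This is exactly why the criterion \eqref{1.4} allows the $\cm(|F|^{p_1})$ term, and why the paper (Lemma \ref{l5.1}, i.e., \cite[Lemma 5.3]{am06}) estimates $R_B$ by commuting, $\nabla L^{-1/2}e^{-kr_B^2L}f=\nabla e^{-kr_B^2L}(L^{-1/2}f)$, and bounding $\fint_B|\nabla e^{-kr_B^2L}g|^{q_0}$ by averages of $|\nabla g|^{p_0}=|F|^{p_0}$ over $2^{j+1}B$ with rapidly decaying coefficients; this yields \eqref{1.4} with precisely the $\cm(|F|^{p_0})$ term you discard, and your argument does not close without it.

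A secondary, fixable, issue: your passage from \eqref{2.4} and $v^{1-(\frac{q}{q_-(L)})'}\in A_\fz(\rn)$ to the bump hypothesis \eqref{1.8} is only gestured at (``for $p_0$ close enough to $q_-(L)$''). The paper's mechanism is explicit: choose $r_0\in(1,\fz)$ with $v^{1-(\frac{q}{q_-(L)})'}\in RH_{r_0}(\rn)$ and define $p_0$ by $(1-(\frac{q}{q_-(L)})')r_0=1-(\frac{q}{p_0})'$; then \eqref{2.4} self-improves to $[\omega,v^{1-(\frac{q}{p_0})'}]_{A_{\frac{q}{p_0}}(\rn)}<\fz$, which is exactly a power-bump condition at the level $q/q_-(L)$ with $\Phi(t)=t^{(\frac{q}{q_-(L)})'r_0}$, and the same margin serves parts (ii) and (iii). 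You should repair the $R_B$ estimate first, since without \eqref{1.4} the whole reduction to Theorem \ref{t1.2} collapses.
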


\begin{remark}\label{r2.6}
\begin{itemize}
\item[\rm(i)] The conclusion of Theorem \ref{t2.5}(i) with $\omega=v$ was obtained
by Auscher and Martell \cite[Theorem 5.2]{am06}. Thus, Theorem \ref{t2.5}(i) generalizes the one-weight
boundedness of the Riesz transform $\nabla L^{-1/2}$ established by Auscher and Martell \cite[Theorem 5.2]{am06}
to the two-weight case.
\item[\rm(ii)] Using a different method from that used in this article,
Sawano et al. \cite[Theorem 1.8]{shs19} established the boundedness of $\nabla L^{-1/2}$
on Morrey spaces $\cm^{\tz}_q(\rn)$ under the different assumptions
from those of Theorem \ref{t2.5}(iii) for $\tz$ and $q$.
To the best of our knowledge, the (two-weight) boundedness of
the Riesz transform $\nabla L^{-1/2}$ in the scale of weighted Lorentz spaces,
Lorentz--Morrey spaces, and variable Lebesgue spaces obtained in Theorem \ref{t2.5} is new.
\end{itemize}
\end{remark}

Let $L$ be as in \eqref{2.3}. For any $f\in L^2(\rn)$, the \emph{Littlewood--Paley $g$-function} $G_L(f)$,
associated with $L$, is defined by setting, for any $x\in\rn$,
\begin{equation*}
G_L(f)(x):=\lf[\int_0^\fz\lf|\nabla e^{-tL}f(x)\r|^2\,dt\r]^{\frac12}.
\end{equation*}

For the Littlewood--Paley $g$-function associated with $L$, we have the following (two-weight) boundedness.

\begin{theorem}\label{t2.6}
Let $L$ be as in \eqref{2.3} and $q\in(q_-(L),\,q_+(L))$. Assume that the weights $\omega$ and $v$ satisfy
$\omega\in RH_s(\rn)$ with some $s\in((\frac{q_+(L)}{q})',\fz]$,
$v^{1-(\frac{q}{q_-(L)})'}\in A_\fz(\rn)$, and \eqref{2.4}.
\begin{itemize}
\item[\rm(i)] Then there exists a positive constant $C$ such that, for any $f\in L^q_v(\rn)$,
\begin{equation*}
\lf\|G_L(f)\r\|_{L^q_\omega(\rn)}\le C\|f\|_{L^q_v(\rn)}.
\end{equation*}

\item[\rm(ii)] Let $t\in(0,\fz]$. Then there exists a positive constant $C$ such that,
 for any $f\in L^{q,\,t}_v(\rn)$,
\begin{equation*}
\lf\|G_L(f)\r\|_{L^{q,\,t}_\omega(\rn)}\le C\|f\|_{L^{q,\,t}_v(\rn)}.
\end{equation*}

\item[\rm(iii)] Let $t\in(0,\fz]$ and $\tz\in(\frac{nq}{q_+(L)},n]$.
Then there exists a positive constant $C$ such that, for any $f\in L^{q,\,t;\,\tz}(\rn)$,
\begin{equation*}
\lf\|G_L(f)\r\|_{L^{q,\,t;\,\tz}(\rn)}\le C\|f\|_{L^{q,\,t;\,\tz}(\rn)}.
\end{equation*}
In particular, there exists a positive constant $C$ such that, for any $f\in \cm^{\tz}_q(\rn)$,
\begin{equation*}
\lf\|G_L(f)\r\|_{\cm^{\tz}_q(\rn)}\le C\|f\|_{\cm^{\tz}_q(\rn)}.
\end{equation*}

\item[\rm(iv)] Assume that $p(\cdot)\in\mathcal{P}(\rn)$ satisfies $q_-(L)<p_-\le p_+\le q_+(L)$
with $p_-$ and $p_+$ as in \eqref{1.7}, and that $\cm$ is bounded on $L^{p(\cdot)}(\rn)$.
Then there exists a positive constant $C$ such that, for any $f\in L^{p(\cdot)}(\rn)$,
\begin{equation*}
\lf\|G_L(f)\r\|_{L^{p(\cdot)}(\rn)}\le C\|f\|_{L^{p(\cdot)}(\rn)}.
\end{equation*}
\end{itemize}
\end{theorem}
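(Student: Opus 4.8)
The plan is to deduce Theorem \ref{t2.6} from the general criterion in Theorem \ref{t1.2} in exactly the same spirit as the proof of Theorem \ref{t2.5} for the Riesz transform. The first step is to reduce to a dense class: it suffices to prove the stated inequalities for $f\in L^\fz_{\rm c}(\rn)$ (or $f\in L^2(\rn)$ with compact support), since the corresponding spaces are reached by a standard density/Fatou argument once the a priori bound is established. For such an $f$, set $F:=G_L(f)$; one checks $F\in L^1_\loc(\rn)$ using the known unweighted $L^q$-boundedness of $G_L$ for $q\in(q_-(L),q_+(L))$.

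The key step is to verify the two ``off-diagonal'' decomposition hypotheses \eqref{1.4} and \eqref{1.5} with $\gz=0$, $p_2:=q_-(L)$ (or any fixed exponent slightly below $q_-(L)$ — I expect $p_2$ chosen so that $q/p_2$ has the right range and $s\in((\frac{p_3}{q})',\fz]$ is available), $p_1$ some exponent below $q_-(L)$ as well, $p_3:=q_+(L)$ (or slightly below), and $\epsilon$ small. The natural splitting, for a ball $B$, is $F_B:=G_L(I-e^{-r_B^2 L})^m(f\mathbf 1_{cB})$-type local part and $R_B$ the complementary part built from the semigroup, following Auscher's ``off-diagonal decay'' machinery (as in \cite{a07,am06,am07}). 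Concretely, one uses the $L^{p_2}\to L^{p_2}$ off-diagonal estimates for $\sqrt t\,\nabla e^{-tL}$ on the range $(q_-(L),q_+(L))$, together with a reduction of $G_L$ to a vector-valued singular integral / square function, to show that the tail term $R_B$ satisfies \eqref{1.4} with $\cm(|F|^{p_1})$ and $\cm(|f|^{p_2})$ on the right, and that the local term $F_B$ satisfies the $L^{p_1}$-estimate \eqref{1.5} via the weak-$(p_2,p_2)$ or strong-$(p_2,p_2)$ bound of $G_L$ composed with $\cm_\gz$ (here $\gz=0$) and the Kolmogorov inequality when $p_1<p_2$. The fact that \eqref{1.6} then holds is automatic from Theorem \ref{t1.1} once $\omega\in RH_s(\rn)$ and the structural constants are in place. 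Having \eqref{1.4}, \eqref{1.5}, and \eqref{1.6}, parts (i)--(iii) of Theorem \ref{t2.6} follow directly from the corresponding parts of Theorem \ref{t1.2}: part (i) from Theorem \ref{t1.2}(i) after translating the condition \eqref{2.4} (a two-weight $A_{q/q_-(L)}$ condition with $v^{1-(q/q_-(L))'}\in A_\fz$) into the bump condition \eqref{1.8} via \cite[Corollary 1.3]{p95}/\cite[Corollary 1.4]{pr15}; part (ii) from Theorem \ref{t1.2}(ii) (the extra $q_0$-hypothesis again being supplied by the $A_\fz$ assumption on $v^{1-(q/q_-(L))'}$, which lets one choose $q_0\in(1,q/p_2)$ with a bump condition); and part (iii) from Theorem \ref{t1.2}(iii), noting the range $\tz\in(\frac{nq}{q_+(L)},n]$ matches $\tz\in(\frac{nq}{p_3},n]$ with $p_3=q_+(L)$.

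For part (iv), the plan is to invoke the $\omega=v$ case of part (i): under $q_-(L)<p_-\le p_+\le q_+(L)$ and boundedness of $\cm$ on $L^{p(\cdot)/p_2}(\rn)$, one applies the limited-range variable-exponent extrapolation theorem of Cruz-Uribe and Wang \cite[Theorem 2.14]{cw17} (Lemma \ref{l3.7} in the paper) to the family of pairs $(G_L(f),f)$, using that for every $u$ in the appropriate limited-range Muckenhoupt class one has $\|G_L(f)\|_{L^q_u}\ls\|f\|_{L^q_u}$ — which is itself the one-weight special case of part (i), or equivalently the known result of Auscher--Martell. This is exactly how Theorem \ref{t1.2}(iv) is proved, and how part (iv) of Theorem \ref{t2.5} is handled, so the same argument transfers verbatim with the Riesz transform replaced by $G_L$.

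The main obstacle I anticipate is purely technical: establishing the decomposition estimates \eqref{1.4} and \eqref{1.5} for $G_L$ with the correct exponents $p_1,p_2,p_3$ tied to $q_\pm(L)$. This requires the off-diagonal ($L^{p}$--$L^{p}$ Gaffney-type) bounds for $\{\sqrt t\,\nabla e^{-tL}\}_{t>0}$ valid precisely on $(q_-(L),q_+(L))$, a careful handling of the square-function (Carleson/tent-space) structure of $G_L$, and the standard annular decomposition of $\rn\setminus 8B$ to sum the tail contributions against $\cm(|f|^{p_2})^{1/p_2}$; once $f\in L^\fz_{\rm c}(\rn)$ these are the same manipulations used by Auscher and Martell in \cite{am06,am07} for $\nabla L^{-1/2}$, adapted to $G_L$, so no new idea is needed but the bookkeeping is the substantive part of the proof.
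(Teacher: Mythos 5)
Your overall route is the paper's route: the paper proves Theorem \ref{t2.6} by literally repeating the proof of Theorem \ref{t2.5} with Lemma \ref{l5.1} replaced by Lemma \ref{l5.2} (the Auscher--Martell estimates for $G_L$), i.e., by verifying \eqref{1.4} and \eqref{1.5} for a semigroup-based decomposition and then invoking Theorem \ref{t1.2}; and your treatment of (iv) by extrapolating (Lemma \ref{l3.7}) from the one-weight case of (i) is fine and is essentially what the proof of Theorem \ref{t1.2}(iv) does internally. However, your exponent scheme contains a step that would fail. You propose $p_2:=q_-(L)$ ``or slightly below'', $p_1<q_-(L)$, and $p_3:=q_+(L)$, with a Kolmogorov-inequality argument when $p_1<p_2$. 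None of the required estimates are available at or outside the open range $(q_-(L),q_+(L))$: $G_L$ has in general no weak or strong $(p,p)$ bound for $p\le q_-(L)$ and no $L^{q_+(L)}$ bound, and the off-diagonal estimates packaged in Lemma \ref{l5.2} hold only for exponents $p_0\le q_0$ strictly inside $(q_-(L),q_+(L))$. So the classical Calder\'on--Zygmund scheme (weak $(1,1)$ plus Kolmogorov, as in the proofs of Theorems \ref{t2.1}--\ref{t2.3}) does not transfer; the paper instead takes $p_1=p_2:=p_0\in(q_-(L),q)$ and $p_3:=q_0\in(q,q_+(L))$ with $s>(\frac{q_0}{q})'$, $\epsilon:=0$, and the decomposition is through the functional calculus, $F_B:=G_L((I-e^{-r_B^2L})^m f)$ and $R_B:=G_L([I-(I-e^{-r_B^2L})^m]f)$, applied to the \emph{whole} $f$ rather than to a spatial truncation $f\mathbf{1}_{cB}$ (your ``local part'' $G_L((I-e^{-r_B^2L})^m(f\mathbf{1}_{cB}))$ is not the decomposition that Lemma \ref{l5.2} estimates). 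Note also that choosing $p_1<p_0$ does not help: Lemma \ref{l5.2} controls $R_B$ by $p_0$-averages of $|G_L(f)|$, which dominate, not are dominated by, $[\cm(|F|^{p_1})]^{1/p_1}$.

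The second point you leave vague is exactly the two-weight-specific mechanism. The hypothesis $v^{1-(\frac{q}{q_-(L)})'}\in A_\fz(\rn)$ is used to produce a reverse H\"older exponent $r_0$ and hence a $p_0\in(q_-(L),q)$ with $(1-(\frac{q}{q_-(L)})')r_0=1-(\frac{q}{p_0})'$, so that \eqref{2.4} upgrades to $[\omega,v^{1-(\frac{q}{p_0})'}]_{A_{q/p_0}(\rn)}<\fz$; after the standard openness of the reverse H\"older classes this is precisely a power-bump condition of the form \eqref{1.8} at the level $p_2=p_0$, which is what Theorem \ref{t1.2} asks for. Your appeal to the Sawyer-condition corollaries of P\'erez and P\'erez--Rela gives boundedness of $\cm$ from $L^{q/p_2}_v(\rn)$ to $L^{q/p_2}_\omega(\rn)$, which is morally the same input but does not literally verify hypothesis \eqref{1.8}, and in any case only works once $p_2$ has been moved strictly above $q_-(L)$ as just described. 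With these two corrections (exponents $p_0,q_0$ strictly inside $(q_-(L),q_+(L))$ tied to the reverse H\"older exponents of $v^{1-(\frac{q}{q_-(L)})'}$ and $\omega$, and the functional-calculus decomposition estimated by Lemma \ref{l5.2}), your plan coincides with the paper's proof.
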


\begin{remark}\label{r2.7}
\begin{itemize}
\item[\rm(i)] Theorem \ref{t2.6}(i) with $\omega=v$ was established
by Auscher and Martell \cite[Theorem 7.2(b)]{am06}. Therefore, Theorem \ref{t2.6}(i) extends the one-weight
boundedness of the Littlewood--Paley $g$-function associated with the operator $L$ obtained in \cite[Theorem 7.2(b)]{am06}
to the two-weight case.
\item[\rm(ii)] To the best of our knowledge, the (two-weight) boundedness of
the Littlewood--Paley $g$-function associated with the operator $L$ in the scale of weighted Lorentz spaces,
(Lorentz--)Morrey spaces, and variable Lebesgue spaces obtained in Theorem \ref{t2.6} is new.
\end{itemize}
\end{remark}

Let $\az\in(0,n)$. For any $f\in \cs(\rn)$, the fractional integral $L^{-\az/2}(f)$,
associated with $L$, is defined by setting, for any $x\in\rn$,
$$
L^{-\az/2}(f)(x):=\frac{1}{\Gamma(\az/2)}\int_0^\fz t^{\az/2}e^{-tL}f(x)\frac{dt}{t},
$$
where $\Gamma(\cdot)$ denotes the usual Gamma function.

Applying Theorem \ref{t1.3}, we obtain the following (two-weight) boundedness for the fractional integral
operator $L^{-\az/2}$.

\begin{theorem}\label{t2.7}
Let $L$ be as in \eqref{2.3}, $\az\in(0,\frac{n}{p_-(L)})$, $q\in(\wz{p}_-(L),p_+(L))$, and $p\in(p_-(L),p_+(L))$ satisfy $\frac{1}{q}=\frac{1}{p}-\frac{\az}{n}$,
where $\wz{p}_-(L)$ is given by setting $\frac{1}{p_-(L)}-\frac{1}{\wz{p}_-(L)}=\frac{\az}{n}$.
Assume that the weights $\omega$ and $v$ satisfy
that $\omega\in RH_s(\rn)$ with some $s\in((\frac{p_+(L)}{q})',\fz]$, $v^{1-(\frac{p}{p_-(L)})'}\in A_\fz(\rn)$, and
\begin{equation}\label{2.5}
\lf[\omega,v^{1-(\frac{p}{p_-(L)})'}\r]_{A_{\frac{p_-(L)}{q}(1-\frac{\az p_-(L)}{n})}(\rn)}:=
\sup_{B\subset\rn}\lf[\fint_B \omega\,dx\r]
\lf[\fint_B v^{1-(\frac{p}{p_-(L)})'}\,dx\r]^{\frac{p_-(L)}{q}(1-\frac{\az p_-(L)}{n})-1}<\fz,
\end{equation}
where the supremum is taken over all balls $B$ of $\rn$.
\begin{itemize}
\item[\rm(i)] Then $L^{-\az/2}$ is bounded from $L^p_v(\rn)$ to $L^q_\omega(\rn)$, and there exists
a positive constant $C$ such that, for any $f\in L^p_v(\rn)$,
\begin{equation*}
\lf\|L^{-\az/2}(f)\r\|_{L^q_\omega(\rn)}\le C\|f\|_{L^{p}_v(\rn)}.
\end{equation*}

\item[\rm(ii)] Let $\tz\in(\frac{nq}{p_+(L)},n]$ and $\wz{\tz}:=n-\frac{p}{q}(n-\tz)$.
Then $L^{-\az/2}$ is bounded from $\cm^{\wz{\tz}}_p(\rn)$ to $\cm^\tz_q(\rn)$, and
there exists a positive constant $C$ such that, for any $f\in \cm^{\wz{\tz}}_p(\rn)$,
\begin{equation*}
\lf\|L^{-\az/2}(f)\r\|_{\cm^\tz_q(\rn)}\le C\|f\|_{\cm^{\wz{\tz}}_p(\rn)}.
\end{equation*}
\end{itemize}
\end{theorem}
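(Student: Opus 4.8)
The plan is to derive Theorem~\ref{t2.7} from the abstract criterion of Theorem~\ref{t1.3}, applied to the pair $(F,f)$ with $F:=L^{-\az/2}(f)$ and with the parameters
$$p_1:=p_2:=p_-(L),\qquad p_3:=p_+(L),\qquad \gz:=\frac{\az\, p_-(L)}{n},\qquad \epsilon:=0.$$
With these choices the structural hypotheses of Theorem~\ref{t1.3} reduce exactly to those of Theorem~\ref{t2.7}: since $p_-(L)<2<p_+(L)$, one has $p_3>\max\{p_1,p_2\}$ and $p_3/p_1>1$; the Sobolev relation $\frac1p=\frac1q+\frac{\gz}{p_2}$ becomes $\frac1q=\frac1p-\frac{\az}{n}$; the constraint $\gz\in(0,1)$ is $\az\in(0,\frac{n}{p_-(L)})$; the constraint $q(1-\gz)>p_2$ is $q>\wz{p}_-(L)$; and $q\in(\wz{p}_-(L),p_+(L))$ yields both $q\in(\max\{p_1,p_2\},p_3)$ and, via $\frac1q=\frac1p-\frac{\az}{n}$, also $p\in(p_-(L),p_+(L))$. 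The hypothesis $\omega\in RH_s(\rn)$ with $s\in((\frac{p_+(L)}{q})',\fz]$ is the reverse-H\"older hypothesis in Theorem~\ref{t1.3} with $p_3=p_+(L)$. Finally, the weight condition \eqref{1.9} will be recovered from \eqref{2.5} by taking the power Young function $\Phi(t):=t^{s_0(p/p_-(L))'}$, where $s_0\in(1,\fz)$ is a reverse-H\"older exponent of the $A_\fz(\rn)$-weight $v^{1-(p/p_-(L))'}$; such a $\Phi$ is doubling and satisfies the integral conditions in Theorem~\ref{t1.3}, and the self-improvement of $A_\fz(\rn)$ (compare the discussion around \eqref{1.3}) turns \eqref{2.5} into \eqref{1.9} for this $\Phi$.

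The core of the argument is then the verification of the pointwise hypotheses \eqref{1.4} and \eqref{1.5}, which I would carry out for $f$ in a dense subclass (say $f\in\cs(\rn)$, where $L^{-\az/2}(f)$ is defined) and then pass to the limit. Fix a ball $B=B(x_B,r)$ and a sufficiently large integer $m$, and use the $L$-adapted operator identity
$$L^{-\az/2}=L^{-\az/2}\bigl(I-e^{-r^{2}L}\bigr)^{m}+\sum_{k=1}^{m}\binom{m}{k}(-1)^{k+1}e^{-kr^{2}L}L^{-\az/2};$$
let $F_B$ and $R_B$ be, respectively, these two operators applied to $f$, so that $F=F_B+R_B$ (hence $|F|\le|F_B|+|R_B|$). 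Since $R_B=\sum_{k=1}^{m}\binom{m}{k}(-1)^{k+1}e^{-kr^{2}L}(F)$, decomposing $F$ over the dyadic annuli of $B$ and invoking the $L^{p_-(L)}$--$L^{p_+(L)}$ off-diagonal (Davies--Gaffney) estimates of the semigroup $\{e^{-tL}\}_{t>0}$, whose Gaussian decay dominates the volume growth $|2^{j}B|\sim2^{jn}|B|$ of the annuli, one obtains, for $x_1\in B$,
$$\lf(\fint_{B}|R_B|^{p_+(L)}\,dx\r)^{\frac{1}{p_+(L)}}\ls\lf[\cm(|F|^{p_-(L)})(x_1)\r]^{\frac{1}{p_-(L)}};$$
this is \eqref{1.4} with a harmless constant $C_1$ (the $\cm_\gz$-term on the right of \eqref{1.4} is not needed here).

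For $F_B$ I would use that $L^{-\az/2}(I-e^{-r^{2}L})^{m}=r^{\az}\phi(r^{2}L)$ with $\phi(z):=z^{-\az/2}(1-e^{-z})^{m}$ a bounded holomorphic function on a sector vanishing to order $m-\az/2$ at the origin; hence, for $m$ large, $L^{-\az/2}(I-e^{-r^{2}L})^{m}$ is a ``smoothed fractional integral at scale $r$'' satisfying $L^{p_-(L)}$--$L^{p_+(L)}$ off-diagonal estimates (with weak-type modification at an endpoint, handled via the Kolmogorov inequality as in the proof of Theorem~\ref{t2.4}) which carry an extra factor $\sim r^{\az}$ and polynomial decay of any prescribed order. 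Decomposing $f$ over the dyadic annuli $\{2^{j+1}B\setminus 2^{j}B\}_{j\ge0}$ of $B$, summing, and using that for $x_2\in B$
$$(2^{j}r)^{\az}\lf(\fint_{2^{j+1}B}|f|^{p_-(L)}\,dx\r)^{\frac{1}{p_-(L)}}\sim\lf[|2^{j+1}B|^{\gz}\fint_{2^{j+1}B}|f|^{p_-(L)}\,dx\r]^{\frac{1}{p_-(L)}}\ls\lf[\cm_\gz(|f|^{p_-(L)})(x_2)\r]^{\frac{1}{p_-(L)}}$$
(recall $\gz=\frac{\az p_-(L)}{n}$), one gets
$$\lf(\fint_{B}|F_B|^{p_-(L)}\,dx\r)^{\frac{1}{p_-(L)}}\ls\lf[\cm_\gz(|f|^{p_-(L)})(x_2)\r]^{\frac{1}{p_-(L)}},$$
which is \eqref{1.5} with $\epsilon=0$. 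Here the strict inequality $q<p_+(L)$, equivalently $\az<n(\frac{1}{p_-(L)}-\frac{1}{p_+(L)})$, is precisely what makes the relevant time integrals converge. Granting \eqref{1.4} and \eqref{1.5}, Theorem~\ref{t1.3}(i) gives (i); part (ii) then follows at once from Theorem~\ref{t1.3}(ii), since the verification of \eqref{1.4}--\eqref{1.5} does not involve the weights, and $\tz\in(\frac{nq}{p_+(L)},n]=(\frac{nq}{p_3},n]$ while $\wz{\tz}=n-\frac{p}{q}(n-\tz)$ are as required there.

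I expect the main obstacle to be establishing the off-diagonal estimates for the composite operator $L^{-\az/2}(I-e^{-r^{2}L})^{m}$ and the accounting of the two annular sums: one must control how the Gaussian or polynomial off-diagonal decay competes with both the volume growth of the annuli and (for $F_B$) the fractional weight $(2^{j}r)^{\az}$, and it is exactly here that $m$ being large and the hypothesis $q<p_+(L)$ enter. The only other non-routine step is the self-improvement argument that identifies \eqref{2.5} together with $v^{1-(p/p_-(L))'}\in A_\fz(\rn)$ with the ``bumped'' condition \eqref{1.9} needed to apply Theorem~\ref{t1.3}(i).
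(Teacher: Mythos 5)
Your overall plan is the paper's: decompose $L^{-\az/2}(f)$ on each ball $B$ as $F_B:=L^{-\az/2}(I-e^{-r_B^2L})^m(f)$ and $R_B:=L^{-\az/2}[I-(I-e^{-r_B^2L})^m](f)$, verify \eqref{1.4} and \eqref{1.5} with $\epsilon:=0$, and invoke Theorem \ref{t1.3}. The genuine gap is your choice of exponents $p_1=p_2:=p_-(L)$, $p_3:=p_+(L)$, $\gz:=\frac{\az p_-(L)}{n}$. The estimates you would then have to prove are \emph{endpoint} estimates: uniform $L^{p_-(L)}$--$L^{p_+(L)}$ off-diagonal bounds for $e^{-kr_B^2L}$ (to dominate the $L^{p_+(L)}$-average of $R_B$ by $[\cm(|F|^{p_-(L)})]^{1/p_-(L)}$) and $L^{p_-(L)}$-based bounds, including a weak-type/Kolmogorov argument, for $L^{-\az/2}(I-e^{-r_B^2L})^m$. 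But $(p_-(L),p_+(L))$ is by definition the \emph{interior} of the interval of exponents where the semigroup is bounded; at the endpoints even uniform $L^p$-boundedness of $e^{-tL}$ is unavailable for general complex coefficients (and $p_+(L)$ may be $\fz$), so neither the claimed Davies--Gaffney bounds at $(p_-(L),p_+(L))$ nor the weak-type endpoint behaviour of $L^{-\az/2}(I-e^{-r_B^2L})^m$ at $p_-(L)$ has any basis. The paper's Lemma \ref{l5.3} (Auscher--Martell) explicitly requires strictly interior exponents $p_-(L)<p_0<s_0<q_0<p_+(L)$.

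This is precisely what the weight hypotheses are for, and it is the point your proposal misses: the paper uses $v^{1-(\frac{p}{p_-(L)})'}\in A_\fz(\rn)$, i.e. $\in RH_{r_0}(\rn)$ for some $r_0>1$, not merely to ``bump'' a Young function at level $p_-(L)$, but to \emph{move the exponent off the endpoint}, choosing $p_0\in(p_-(L),p_+(L))$ with $[1-(\frac{p}{p_-(L)})']r_0=1-(\frac{p}{p_0})'$ so that \eqref{2.5} upgrades to the corresponding two-weight condition at level $p_0$ (which is what \eqref{1.9} requires there); symmetrically, $\omega\in RH_s(\rn)$ with $s>(\frac{p_+(L)}{q})'$ is used to pick $q_0\in(q,p_+(L))$ with $s>(\frac{q_0}{q})'$, and $s_0$ is fixed by $\frac{1}{p_0}-\frac{1}{s_0}=\frac{\az}{n}$. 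Theorem \ref{t1.3} is then applied with $p_1:=s_0$, $p_2:=p_0$, $p_3:=q_0$, $\gz:=\frac{\az p_0}{n}$, the inputs \eqref{1.4}--\eqref{1.5} being exactly the two displays of Lemma \ref{l5.3} (note in particular that the local estimate \eqref{1.5} is taken in the $L^{s_0}$-average, with the fractional maximal function at level $p_0$, so $p_1\neq p_2$ there). Your remaining reductions -- the matching of the Sobolev relation, the range of $q$, the bump via a power Young function, and the deduction of (ii) from Theorem \ref{t1.3}(ii) with $\wz{\tz}=n-\frac{p}{q}(n-\tz)$ -- are sound once the exponents are chosen in the interior as above (for (ii) one also takes $q_0$ close enough to $p_+(L)$ so that $\tz>\frac{nq}{q_0}$), but as written the core analytic steps of your proof rest on estimates that are not available.
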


\begin{remark}\label{r2.8}
\begin{itemize}
\item[\rm(i)] The conclusion of Theorem \ref{t2.7}(i) with $\omega=v$ was obtained
by Auscher and Martell \cite[Theorem 1.3]{am08}. Thus, Theorem \ref{t2.7}(i) generalizes the one-weight
boundedness of the fractional integral operator $L^{-\az/2}$ established in \cite[Theorem 1.3]{am08}
to the two-weight case.
\item[\rm(ii)] To the best of our knowledge, the boundedness of the fractional integral operator $L^{-\az/2}$
in the scale of Morrey spaces obtained in Theorem \ref{t2.7}(ii) is new.
\end{itemize}
\end{remark}

\section{Proofs of Theorems \ref{t1.1}, \ref{t1.2}, and \ref{t1.3}\label{s3}}

 In this section, we first give the proof of Theorem \ref{t1.1}; we then
show Theorems \ref{t1.2} and \ref{t1.3} via using Theorem \ref{t1.1} and
the two-weight inequality for the (fractional) Hardy--Littlewood maximal
function.

To show Theorem \ref{t1.1}, we need the following properties of $A_p(\rn)$ weights,
which are well known (see, for instance, \cite[Chapter 7]{g14}).

\begin{lemma}\label{l3.1}
\begin{enumerate}
\item[\rm(i)] $\bigcup_{p\in[1,\fz)}A_p(\rn)=A_\fz(\rn)
=\bigcup_{r\in(1,\fz]}RH_r(\rn).$
\item[\rm(ii)] If $\omega\in RH_s(\rn)$ with $s\in(1,\fz]$, then,
for any ball $B$ of $\rn$ and any measurable set $E\subset B$,
$$\frac{\omega(E)}{\omega(B)}\le [\omega]_{RH_s(\rn)}
\lf[\frac{|E|}{|B|}\r]^{\frac{s-1}s}.$$
\item[\rm(iii)] Let $\omega\in A_p(\rn)$ with some $p\in[1,\fz)$ and $\gz\in(0,1)$.
Then $\omega^{\gz}\in RH_{\gz^{-1}}(\rn)$, and there exists a positive constant
$C$, depending only on $[\omega]_{A_p(\rn)}$ and $\gz$, such that
$[\omega^\gz]_{RH_{\gz^{-1}}(\rn)}\le C$.
\end{enumerate}
\end{lemma}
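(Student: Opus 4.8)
The plan is to treat the three parts separately: the two quantitative estimates~(ii) and~(iii) are elementary consequences of H\"older's and Jensen's inequalities, while part~(i) is a recollection of the classical structure theorem for $A_\fz(\rn)$. In~(i), the first equality is the very definition of $A_\fz(\rn)$ adopted in Definition~\ref{d1.1}, so the only content is the identity $\bigcup_{p\in[1,\fz)}A_p(\rn)=\bigcup_{r\in(1,\fz]}RH_r(\rn)$, which belongs to the classical theory recorded in \cite[Chapter 7]{g14}. Its substantive ingredient is the reverse H\"older (self-improvement) inequality for $A_p(\rn)$ weights: if $\omega\in A_p(\rn)$ with $p\in[1,\fz)$, then there exist $r\in(1,\fz)$ and $C\in(1,\fz)$, depending only on $n$, $p$, and $[\omega]_{A_p(\rn)}$, such that $(\fint_B\omega^r\,dx)^{1/r}\le C\fint_B\omega\,dx$ for every ball $B\subset\rn$. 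I would prove this via the Calder\'on--Zygmund stopping-time decomposition: from the $A_p(\rn)$ condition one first derives the level-set bound $|\{x\in B:\ \omega(x)>t\fint_B\omega\,dx\}|\ls t^{-\dz}|B|$ for some $\dz\in(0,1)$, and then integrates in $t$ to gain a small amount of extra integrability. Granting this, $A_p(\rn)\subset\bigcup_{r\in(1,\fz]}RH_r(\rn)$ is immediate, and the reverse inclusion follows from part~(ii) and a dual comparability estimate, both standard. I expect this reverse H\"older inequality to be the main obstacle; the rest is routine.

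For part~(ii), fix $\omega\in RH_s(\rn)$ with $s\in(1,\fz)$, a ball $B\subset\rn$, and a measurable set $E\subset B$. By H\"older's inequality with exponents $s$ and $s'$,
\[
\omega(E)=\int_E\omega\,dx=\int_B\omega\,\mathbf{1}_E\,dx\le\left(\int_B\omega^s\,dx\right)^{\frac1s}|E|^{\frac1{s'}}
=|B|^{\frac1s}\left(\fint_B\omega^s\,dx\right)^{\frac1s}|E|^{1-\frac1s}.
\]
By the definition of $[\omega]_{RH_s(\rn)}$ in Definition~\ref{d1.1}, $(\fint_B\omega^s\,dx)^{1/s}\le[\omega]_{RH_s(\rn)}\fint_B\omega\,dx=[\omega]_{RH_s(\rn)}\,\omega(B)/|B|$, and substituting this into the display above yields
\[
\omega(E)\le[\omega]_{RH_s(\rn)}\,\omega(B)\,|B|^{\frac1s-1}|E|^{1-\frac1s}
=[\omega]_{RH_s(\rn)}\,\omega(B)\left(\frac{|E|}{|B|}\right)^{\frac{s-1}{s}},
\]
which is the claim. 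The case $s=\fz$ is identical, with $(\fint_B\omega^s\,dx)^{1/s}$ replaced by $\esup_{y\in B}\omega(y)\le[\omega]_{RH_\fz(\rn)}\fint_B\omega\,dx$ and $|E|^{1/s'}$ replaced by $|E|$, the exponent $(s-1)/s$ being then read as $1$.

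For part~(iii), the key elementary estimate is the following logarithmic $A_\fz(\rn)$-type bound: if $\omega\in A_p(\rn)$ with $p\in[1,\fz)$, then, for every ball $B\subset\rn$,
\[
\fint_B\omega\,dx\le[\omega]_{A_p(\rn)}\exp\left(\fint_B\log\omega\,dx\right).
\]
When $p\in(1,\fz)$, this follows by applying Jensen's inequality to the concave function $\log$, which gives $(\fint_B\omega^{-1/(p-1)}\,dx)^{p-1}\ge\exp(-\fint_B\log\omega\,dx)$, and then using the definition of $[\omega]_{A_p(\rn)}$; when $p=1$, it follows from $\fint_B\omega\,dx\le[\omega]_{A_1(\rn)}\einf_{y\in B}\omega(y)$ together with $\einf_{y\in B}\omega(y)\le\exp(\fint_B\log\omega\,dx)$. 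Now let $\gz\in(0,1)$. Applying Jensen's inequality to the convex function $\exp$ gives
\[
\fint_B\omega^\gz\,dx\ge\exp\left(\gz\fint_B\log\omega\,dx\right)=\left[\exp\left(\fint_B\log\omega\,dx\right)\right]^\gz,
\]
and combining this with the logarithmic bound above yields $\fint_B\omega^\gz\,dx\ge[\omega]_{A_p(\rn)}^{-\gz}(\fint_B\omega\,dx)^\gz$ for every ball $B\subset\rn$. Since $(\omega^\gz)^{1/\gz}=\omega$, the definition of the reverse H\"older constant gives $[\omega^\gz]_{RH_{\gz^{-1}}(\rn)}=\sup_{B\subset\rn}(\fint_B\omega\,dx)^\gz(\fint_B\omega^\gz\,dx)^{-1}$, and the last inequality shows this supremum is at most $[\omega]_{A_p(\rn)}^\gz$. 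Hence~(iii) holds with $C:=[\omega]_{A_p(\rn)}^\gz$, which depends only on $[\omega]_{A_p(\rn)}$ and $\gz$.
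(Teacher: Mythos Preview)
Your proofs of (ii) and (iii) are correct and your sketch for (i) identifies the right ingredient (the reverse H\"older self-improvement inequality). Note, however, that the paper does not actually prove this lemma: it is stated as a well-known collection of properties of $A_p(\rn)$ weights with a reference to \cite[Chapter~7]{g14}. So there is no ``paper's own proof'' to compare against; you have supplied complete arguments where the paper only cites.

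A small technical remark on (iii): your Jensen arguments tacitly require $\log\omega\in L^1_{\loc}(\rn)$. This is indeed true for $\omega\in A_p(\rn)$ (for $p>1$, $\log^+\omega\le\omega$ and $\log^-\omega\ls\omega^{-1/(p-1)}$ are both locally integrable; for $p=1$, $\omega$ is locally bounded below), but it is worth saying so explicitly. With that addition, your argument yields the explicit bound $[\omega^\gz]_{RH_{\gz^{-1}}(\rn)}\le[\omega]_{A_p(\rn)}^{\gz}$, which is sharper than the qualitative statement in the lemma.
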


Moreover, we also need the following Whitney covering lemma (see, for instance, \cite[p.\,15, Lemma 2]{St93}).

\begin{lemma}\label{l3.2}
Let $\boz$ be an open proper subset of $\rn$.
Then there exist two sequences $\{x_i:\ x_i\in\boz\}_i$ and $\{r_i:\ r_i\in(0,\fz)\}_i$ such that
\begin{enumerate}
\item[\rm(i)] $\boz=\bigcup_i B(x_i,r_i)$, but $\{B(x_i,r_i/4)\}_i$ are
mutually disjoint balls;
\item[\rm(ii)] for any $i$, $B(x_i,2r_i)\cap\boz^\complement=\emptyset$,
but $B(x_i,4r_i)\cap\boz^\complement\neq\emptyset$;
\item[\rm(iii)] there exists a positive constant $N:=N(n)$, depending only on
$n$, such that, for any $x\in\boz$,
$$
\sum_i\mathbf{1}_{B(x_i,2r_i)}(x)\le N.
$$
\end{enumerate}
\end{lemma}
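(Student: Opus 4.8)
The plan is to build the two sequences by a Vitali-type selection governed by the distance function $d(x):=\dist(x,\boz^\complement)$, rather than by the classical dyadic-cube Whitney decomposition: the numerology required by the three conclusions (the dilation factors $2$ and $4$ together with the shrinkage $1/4$) is too tight to be extracted cleanly from a fixed dyadic mesh, whereas a ball-based selection lets one couple each radius directly to its centre.

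First I would record the elementary properties of $d$: since $\boz$ is open and $\boz\neq\rn$, one has $d(x)\in(0,\fz)$ for every $x\in\boz$, and $d$ is $1$-Lipschitz on $\rn$ because $\boz^\complement$ is nonempty and closed. For $x\in\boz$ put $r_x:=d(x)/3$. With this choice, whichever points get selected will automatically satisfy conclusion (ii): indeed $B(x,2r_x)=B(x,\tfrac23 d(x))\subset\boz$ since $\tfrac23 d(x)<d(x)$, while $B(x,4r_x)=B(x,\tfrac43 d(x))$ contains some $z\in\boz^\complement$ with $|x-z|=d(x)<\tfrac43 d(x)$.

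Next I would choose, by Zorn's lemma, a subfamily of $\{B(x,r_x/4):x\in\boz\}=\{B(x,d(x)/12):x\in\boz\}$ that is pairwise disjoint and maximal with respect to inclusion (pairwise disjointness passes to unions of chains), and take its centres to be $\{x_i\}_i$; this family is countable since disjoint nonempty balls in $\rn$ contain distinct rational points. Set $r_i:=r_{x_i}=d(x_i)/3$. Then the disjointness in (i) holds by construction, and $\bigcup_i B(x_i,r_i)\subset\boz$ because $r_i<d(x_i)$. For the reverse inclusion I would fix $y\in\boz$; by maximality $B(y,d(y)/12)$ must meet some $B(x_i,d(x_i)/12)$, so $|y-x_i|<\tfrac1{12}(d(y)+d(x_i))$, and inserting this into $|d(y)-d(x_i)|\le|y-x_i|$ gives $d(y)<\tfrac{13}{11}d(x_i)$ and hence $|y-x_i|<\tfrac{2}{11}d(x_i)<\tfrac13 d(x_i)=r_i$, that is, $y\in B(x_i,r_i)$.

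Finally, for the bounded overlap in (iii), I would fix $x\in\boz$ and consider the indices $i$ with $x\in B(x_i,2r_i)$. From $|x-x_i|<\tfrac23 d(x_i)$ and the triangle inequality one gets $\tfrac35 d(x)<d(x_i)<3d(x)$, so every such $x_i$ lies in $B(x,2d(x))$ while the associated pairwise disjoint balls $B(x_i,d(x_i)/12)$ have radii exceeding $d(x)/20$ and are contained in the fixed ball $B(x,\tfrac94 d(x))$; a volume comparison then bounds the number of such $i$ by a dimensional constant $N(n)$. The bookkeeping of these explicit constants is routine; the one genuinely delicate point is the covering step, where the prescribed constants force the coupling $r_i=d(x_i)/3$ and make the Lipschitz estimate only barely sufficient ($\tfrac2{11}<\tfrac13$), so the inequalities must be kept sharp there.
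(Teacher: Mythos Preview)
Your argument is correct. The paper does not actually prove this lemma: it simply cites it as the Whitney covering lemma from \cite[p.\,15, Lemma 2]{St93} and uses it as a black box. So there is no ``paper's proof'' to compare against, only the classical reference.

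That said, the route you take is a genuine alternative to the textbook one. Stein's construction starts from the standard dyadic-cube Whitney decomposition (cubes $Q$ with $\diam(Q)\sim\dist(Q,\boz^\complement)$) and then passes to comparable balls; the dilation constants one obtains that way depend on the cube-to-ball conversion and on the particular dyadic numerology. Your approach bypasses cubes entirely: you couple each candidate radius to the distance function by $r_x=d(x)/3$, run a Vitali-type maximal selection on the shrunken balls $B(x,r_x/4)$, and then use the $1$-Lipschitz property of $d$ to recover both the covering and the bounded-overlap statements with exactly the constants $2$, $4$, and $1/4$ demanded by the lemma. The gain is that the numerology is transparent and tailored to the stated constants; the cost is the appeal to Zorn's lemma for the maximal disjoint subfamily (which could be replaced by a greedy selection over a countable dense subset if one prefers to stay constructive). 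Every inequality you wrote checks out, including the tight covering step where $|y-x_i|<\tfrac{2}{11}d(x_i)<\tfrac13 d(x_i)=r_i$.
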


\begin{lemma}\label{l3.3}
There exists a positive constant $K_0:=K_0(n)$, depending only on $n$,
such that, for any $f\in L^1_\loc(\rn)$, if any given ball $B$ of $\rn$
and $\lz\in(0,\fz)$ satisfy that there exists an $x_0\in 4B$ such that
$\cm(f)(x_0)\le\lz$, then, for any $K\in[K_0,\fz)$,
$$
\lf\{x\in\rn:\ \mathbf{1}_B(x)\cm(f)(x)>K\lz\r\}
\subset\lf\{x\in\rn:\ \cm(f\mathbf{1}_{8B})(x)
>\frac{K}{K_0}\lz\r\}.
$$
\end{lemma}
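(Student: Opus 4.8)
The plan is to show the pointwise inclusion of sets by a direct estimate on the maximal function, splitting the input $f$ into its local and global parts with respect to the ball $B$. Suppose $B = B(x_B, r_B)$ is a given ball and $\lz \in (0,\fz)$ such that there exists $x_0 \in 4B$ with $\cm(f)(x_0) \le \lz$. Let $x \in B$ with $\cm(f)(x) > K\lz$; I would estimate $\cm(f)(x)$ by writing, for any ball $Q \ni x$,
\begin{equation*}
\fint_Q |f| \, dy \le \fint_Q |f \mathbf{1}_{8B}| \, dy + \fint_Q |f \mathbf{1}_{(8B)^\complement}| \, dy \le \cm(f\mathbf{1}_{8B})(x) + \fint_Q |f \mathbf{1}_{(8B)^\complement}| \, dy.
\end{equation*}
The goal is to bound the last term by $\frac{K}{K_0}\lz$ whenever $K \ge K_0$ for a suitable dimensional constant $K_0$, which then forces $\cm(f\mathbf{1}_{8B})(x) > K\lz - \frac{K}{K_0}\lz \ge \frac{K}{K_0}\lz$ (after adjusting $K_0$), giving the claimed inclusion.

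The key geometric observation is that the average $\fint_Q |f \mathbf{1}_{(8B)^\complement}| \, dy$ can only be nonzero if $Q$ meets $(8B)^\complement$; since $x \in B$ and $x \in Q$, this forces the radius $r_Q$ of $Q$ to satisfy $r_Q \gtrsim r_B$ (roughly $r_Q \gs r_B$, since $Q$ must reach from inside $B$ to outside $8B$). Consequently $Q \subset C_0 Q'$ for some ball $Q' \ni x_0$ with radius comparable to $r_Q$ and with $|C_0 Q'| \le K_0' |Q|$ for a dimensional constant; more precisely I would choose a ball centered at $x_0$ of radius $\sim r_Q$ that contains $Q$ (using $x_0 \in 4B$, $x \in B$, $r_Q \gs r_B$, one checks $Q \subset B(x_0, c\, r_Q)$ for a dimensional $c$). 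Then
\begin{equation*}
\fint_Q |f \mathbf{1}_{(8B)^\complement}| \, dy \le \frac{|B(x_0, c\, r_Q)|}{|Q|} \fint_{B(x_0,\, c\, r_Q)} |f| \, dy \le K_0 \, \cm(f)(x_0) \le K_0 \lz,
\end{equation*}
where $K_0 = c^n$ (or a similar dimensional quantity). Taking the supremum over all admissible $Q$ then yields $\cm(f\mathbf{1}_{(8B)^\complement})(x) \le K_0 \lz$, so from $\cm(f)(x) > K\lz$ we get $\cm(f\mathbf{1}_{8B})(x) > K\lz - K_0\lz$, and for $K \ge 2K_0$ (hence redefining $K_0$ suitably) this exceeds $\frac{K}{K_0}\lz$; a clean way is to note $K\lz - K_0 \lz \ge \frac{K}{K_0}\lz$ precisely when $K_0 \ge \frac{K_0}{K_0 - 1}$, i.e.\ for $K_0 \ge 2$, so one simply takes the dimensional constant to be $\max\{c^n, 2\}$.

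The main obstacle — really the only delicate point — is pinning down the correct lower bound for $r_Q$ and the correct containing ball centered at $x_0$, so that the volume ratio $|B(x_0, c r_Q)|/|Q|$ is bounded by a purely dimensional constant independent of $\lz$, $B$, and $f$. This is a routine but careful exercise in the triangle inequality: if $Q \cap (8B)^\complement \ne \emptyset$ and $x \in Q \cap B$, then there is a point of $Q$ at distance $\ge 7 r_B$ from $x$ (since $x\in B$ and the point lies outside $8B$), hence $\diam Q \ge 7 r_B$, so $r_Q \ge \tfrac{7}{2} r_B$; and since $|x_0 - x| \le |x_0 - x_B| + |x_B - x| < 5 r_B \le \tfrac{10}{7} r_Q$, one has $Q \subset B(x_0, r_Q + \tfrac{10}{7} r_Q) = B(x_0, \tfrac{17}{7} r_Q)$, giving the volume ratio $(\tfrac{17}{7})^n$. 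I would present these constants explicitly and set $K_0 := \max\{(\tfrac{17}{7})^n, 2\}$ (or simply absorb them and state $K_0 = K_0(n)$), then conclude.
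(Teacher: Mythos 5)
The paper itself offers no proof of this lemma: it is quoted verbatim from \cite[Lemma 1.4]{a07}, so there is no internal argument to compare against; your proposal supplies the standard self-contained proof, and its structure (split $f$ into $f\mathbf{1}_{8B}$ and $f\mathbf{1}_{(8B)^\complement}$, note that any ball $Q\ni x\in B$ meeting $(8B)^\complement$ has $r_Q\gs r_B$ and is contained in a ball centered at $x_0$ of comparable radius, then use $\cm(f)(x_0)\le\lz$) is exactly the right one and matches the argument in \cite{a07}. Two constants need repair, however. First, since $Q$ is an arbitrary ball containing $x$, not a ball centered at $x$, for $y\in Q$ you only have $|y-x|<2r_Q$, so the containment is $Q\subset B(x_0,\tfrac{24}{7}r_Q)$ rather than $B(x_0,\tfrac{17}{7}r_Q)$; this is harmless, since any dimensional constant $c^n$ yields the pointwise bound $\cm(f)(x)\le \cm(f\mathbf{1}_{8B})(x)+c^n\lz$ for all $x\in B$. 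Second, the final bookkeeping as written does not close: with $K_0:=\max\{c^n,2\}=c^n$, the needed inequality $K\lz-c^n\lz\ge\frac{K}{K_0}\lz$ fails at $K=K_0$ (it reads $0\ge\lz$), and the ``clean way'' criterion is mis-stated --- the inequality $K-K_0\ge K/K_0$ for all $K\ge K_0$ is false for every choice of $K_0$, since at $K=K_0$ it reads $0\ge 1$. The fix is the one you already hint at (``redefining $K_0$ suitably''): take, say, $K_0:=2c^n$; then for $K\ge K_0$ one has $K-c^n=K-\tfrac{K_0}{2}\ge\tfrac{K}{2}\ge\frac{K}{K_0}$, so $\cm(f\mathbf{1}_{8B})(x)>K\lz-c^n\lz\ge\frac{K}{K_0}\lz$ and the claimed inclusion follows. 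With these two adjustments of purely dimensional constants the proof is complete.
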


Lemma \ref{l3.3} is just \cite[Lemma 1.4]{a07}.

Furthermore, to prove Theorem \ref{t1.1}, we need the following (weak) boundedness of the Hardy--Littlewood
maximal operator $\cm$ on $L^p(\rn)$ (see, for instance, \cite{g14,St93}).

\begin{lemma}\label{l3.4}
Let $\cm$ be the Hardy--Littlewood maximal operator on $\rn$ and $p\in[1,\fz)$.
Then there exists a positive constant $C_{(n,\,p)}$, depending only on $n$ and $p$,
such that, for any $f\in L^p(\rn)$,
$$\sup_{\lz\in(0,\fz)}\lf\{\lz\lf|\lf\{x\in\rn:\
\cm(f)(x)>\lz\r\}\r|^{\frac1p}\r\}
\le C_{(n,\,p)}\|f\|_{L^p(\rn)}.
$$
Moreover, if $p\in(1,\fz)$, then, for any $f\in L^p(\rn)$,
$\|\cm(f)\|_{L^p(\rn)}\le C_{(n,\,p)}\|f\|_{L^p(\rn)}$.
\end{lemma}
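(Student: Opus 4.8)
The plan is to prove the lemma in three steps: first the weak type $(1,1)$ estimate, then the weak type $(p,p)$ estimate for every $p\in[1,\fz)$, and finally the strong type $(p,p)$ bound for $p\in(1,\fz)$, obtained by interpolating the weak $(1,1)$ estimate against the trivial $L^\fz$ bound.

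First I would establish the weak $(1,1)$ inequality. Let $f\in L^1(\rn)$ and $\lz\in(0,\fz)$; the set $E_\lz:=\{x\in\rn:\ \cm(f)(x)>\lz\}$ is open, and for each $x\in E_\lz$ there is a ball $B_x\ni x$ with $|B_x|<\lz^{-1}\int_{B_x}|f|\,dy$. Covering an arbitrary compact set $K\subset E_\lz$ by these balls and applying the basic $5r$-covering lemma (see, e.g., \cite{g14,St93}) to extract a finite pairwise disjoint subfamily $\{B_{x_j}\}_{j=1}^N$ with $K\subset\bigcup_{j=1}^N 5B_{x_j}$, I obtain, using disjointness,
\[
|K|\le 5^n\sum_{j=1}^N|B_{x_j}|<\frac{5^n}{\lz}\sum_{j=1}^N\int_{B_{x_j}}|f|\,dy\le\frac{5^n}{\lz}\|f\|_{L^1(\rn)}.
\]
Taking the supremum over all compact $K\subset E_\lz$ yields $|E_\lz|\le 5^n\lz^{-1}\|f\|_{L^1(\rn)}$, which is the asserted inequality in the case $p=1$.

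For $p\in(1,\fz)$, H\"older's inequality gives the pointwise bound $\cm(f)\le[\cm(|f|^p)]^{1/p}$ on $\rn$, so that $\{x\in\rn:\ \cm(f)(x)>\lz\}\subset\{x\in\rn:\ \cm(|f|^p)(x)>\lz^p\}$ for every $\lz\in(0,\fz)$; applying the already proved weak $(1,1)$ estimate to $|f|^p\in L^1(\rn)$ then gives $\lz|\{x\in\rn:\ \cm(f)(x)>\lz\}|^{1/p}\le 5^{n/p}\|f\|_{L^p(\rn)}$, which is the first displayed inequality of the lemma. For the strong $(p,p)$ bound with $p\in(1,\fz)$, I would combine the weak $(1,1)$ estimate with the trivial bound $\|\cm(f)\|_{L^\fz(\rn)}\le\|f\|_{L^\fz(\rn)}$ and invoke the Marcinkiewicz interpolation theorem for sublinear operators; alternatively, a direct truncation argument works: for fixed $\lz$, write $f=f\mathbf{1}_{\{|f|>\lz/2\}}+f\mathbf{1}_{\{|f|\le\lz/2\}}$, note that $\cm(f)\le\cm(f\mathbf{1}_{\{|f|>\lz/2\}})+\lz/2$, apply the weak $(1,1)$ bound to the first piece, and integrate the resulting distributional estimate against $p\lz^{p-1}\,d\lz$ via the layer-cake formula.

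Since this statement is entirely classical and is recorded in the references cited immediately before it, one could also simply quote it; in the route sketched above, the only ingredient that is more than formal manipulation is the covering lemma underlying the weak $(1,1)$ argument, which is where any genuine work lies.
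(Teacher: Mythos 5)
Your argument is correct: the weak $(1,1)$ bound via the $5r$-covering lemma and inner regularity, the weak $(p,p)$ bound via the pointwise estimate $\cm(f)\le[\cm(|f|^p)]^{1/p}$, and the strong $(p,p)$ bound via Marcinkiewicz interpolation (or the truncation/layer-cake argument) are all sound, and they apply to the uncentered maximal operator as defined in the paper. The paper itself gives no proof of this lemma at all---it is quoted as classical with a citation to Grafakos and Stein---and your proof is exactly the standard argument recorded in those references, so there is nothing to compare beyond noting that quoting the result, as you observe, is what the authors actually do.
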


Now, we show Theorem \ref{t1.1} by using Lemmas \ref{l3.1}
through \ref{l3.4}.

\begin{proof}[Proof of Theorem \ref{t1.1}]
For any $\lz\in(0,\fz)$, let
$$E(\lz):=\lf\{x\in\rn:\ \cm\lf(|F|^{p_1}\r)(x)>\lz\r\}.
$$
Without loss of generality, we may assume that $E(\lz)$ is a proper subset of $\rn$.
Since $E(\lz)$ is open, it follows from the Whitney covering lemma
(see Lemma \ref{l3.2}) that there exist a family $\{B_i\}_i$ of balls
and a positive constant $N$, depending only on $n$, such that
$\bigcup_iB_i=E(\lz)$, $\{\frac14B_i\}_i$ are mutually disjoint balls,
$\sum_i\mathbf{1}_{2B_i}\le N$, and $(2B_i)\cap
[E(\lz)]^\complement=\emptyset$ and $(4B_i)\cap [E(\lz)]^\complement\neq\emptyset$
for any $i$. Thus, for any $i$, there exists a $y_i\in 4B_i$ such that $\cm(|F|^{p_1})(y_i)\le\lz$.

Let
$$H_\lz:=\lf\{x\in\rn:\ \cm\lf(|F|^{p_1}\r)(x)>\bz\lz,\
\cm_\gz\lf(|f|^{p_2}\r)(x)\le(\kappa\lz)^{\frac{p_2}{p_1}}\r\},$$
where $\bz\in(1,\fz)$ and $\kappa\in(0,1)$ are constants determined later.
Then $H_\lz\subset E(\lz)$ and hence
\begin{equation*}
\omega(H_\lz)\le\sum_{i}\omega(H_\lz\cap B_i).
\end{equation*}

Next, we fix $i$. Assume that $H_\lz\cap B_i\neq\emptyset$; otherwise, we have
nothing to do. Then there exists an $x_i\in B_i$ such that $\cm_\gz(|f|^{p_2})(x_i)
\le(\kappa\lz)^{p_2/p_1}$. By the assumption of Theorem \ref{t1.1}, we conclude that
there exist two measurable functions $F_{8B_i}$ and $R_{8B_i}$ on $8B_i$ such that
$|F|\le |F_{8B_i}|+|R_{8B_i}|$ on $8B_i$. From this and Lemma \ref{l3.3}, we deduce that
there exists a positive constant $\bz_1$ such that, if $\bz\ge\bz_1$, then
\begin{align}\label{3.1}
|H_\lz\cap B_i|&\le|\lf\{x\in\rn:\ \cm\lf(|F|^{p_1}\r)(x)>\bz\lz\r\}\cap B_i| \\
&\le\lf|\lf\{x\in\rn:\ \cm\lf(|F|^{p_1}\mathbf{1}_{8B_i}\r)(x)>\frac{\bz}{\bz_1}\lz\r\}\r|\nonumber \\
&\le\lf|\lf\{x\in\rn:\ \cm\lf(|F_{8B_i}|^{p_1}\mathbf{1}_{8B_i}\r)(x)>\frac{\bz}{2^{p_1+1}\bz_1}\lz\r\}\r|\nonumber \\
&\quad+\lf|\lf\{x\in\rn:\ \cm\lf(|R_{8B_i}|^{p_1}\mathbf{1}_{8B_i}\r)(x)>\frac{\bz}{2^{p_1+1}\bz_1}\lz\r\}\r|.\nonumber
\end{align}
We first assume that $p_3\in(0,\fz)$. By the weak boundedness of the maximal operator $\cm$
(see Lemma \ref{l3.4}) and \eqref{1.5}, we find that
\begin{align}\label{3.2}
&\lf|\lf\{x\in\rn:\ \cm\lf(|F_{8B_i}|^{p_1}\mathbf{1}_{8B_i}\r)(x)>\frac{\bz}{2^{p_1+1}\bz_1}\lz\r\}\r|\\
&\quad\le\frac{2^{p_1+1}C_{(n,\,1)}\bz_1}{\bz\lz}\int_{8B_i}|F_{8B_i}|^{p_1}\,dx \nonumber \\
&\quad\le\frac{2^{p_1+1}C_{(n,\,1)}\bz_1}{\bz\lz}|8B_i|\lf[\epsilon\lz^{\frac{1}{p_1}}
+C_2(\kappa\lz)^{\frac{1}{p_1}}\r]^{p_1}\nonumber \\
&\quad=\frac{2^{p_1}C_{(n)}\bz_1}{\bz}|B_i|\lf(\epsilon+C_2\kappa^{\frac{1}{p_1}}\r)^{p_1}.\nonumber
\end{align}
Moreover, from \eqref{1.4} and Lemma \ref{l3.4}, it follows that
\begin{align}\label{3.3}
&\lf|\lf\{x\in\rn:\ \cm\lf(|R_{8B_i}|^{p_1}\mathbf{1}_{8B_i}\r)(x)>\frac{\bz}{2^{p_1+1}\bz_1}\lz\r\}\r| \\
&\quad\le\lf[\frac{2^{p_1+1}C_{(n,\,p_3/p_1)}\bz_1}{\bz\lz}\r]^{\frac{p_3}{p_1}}\int_{8B_i}|R_{8B_i}|^{p_3}\,dx\nonumber \\
&\quad\le\lf[\frac{2^{p_1+1}C_{(n,\,p_3/p_1)}\bz_1}{\bz\lz}\r]^{\frac{p_3}{p_1}}|8B_i|
\lf[C_1\lz^{\frac{1}{p_1}}+C_1(\kappa\lz)^{\frac{1}{p_1}}\r]^{p_3}\nonumber \\
&\quad=\frac{2^{p_3}C_{1}^{p_3}C_{(n,\,p_1,\,p_3)}\bz_1^{p_3/p_1}}{\bz^{p_3/p_1}}|B_i|
\lf(1+\kappa^{\frac{1}{p_1}}\r)^{p_3}.\nonumber
\end{align}
Then, by \eqref{3.1}, \eqref{3.2}, and \eqref{3.3}, we conclude that
\begin{align}\label{3.4}
|H_\lz\cap B_i|&\le\lf[\frac{2^{p_1}C_{(n)}\bz_1}{\bz}\lf(\epsilon+C_2\kappa^{\frac{1}{p_1}}\r)^{p_1}+
\frac{2^{p_3}C_1^{p_3}C_{(n,\,p_1,\,p_3)}\bz_1^{p_3/p_1}}{\bz^{p_3/p_1}}
\lf(1+\kappa^{\frac{1}{p_1}}\r)^{p_3}\r]|B_i| \\
&=\lf[\frac{2^{p_1}C_{(n)}\bz_1}{\bz^{1-a}}\lf(\epsilon+C_2\kappa^{\frac{1}{p_1}}\r)^{p_1}+
\frac{2^{p_3}C_1^{p_3}C_{(n,\,p_1,\,p_3)}\bz_1^{p_3/p_1}}{\bz^{p_3/p_1-a}}
\lf(1+\kappa^{\frac{1}{p_1}}\r)^{p_3}\r]\bz^{-a}|B_i|.\nonumber
\end{align}
From the assumption $a\in(1,\frac{p_3}{p_1})$, we deduce that there exists a positive
constant $\bz_0\ge\max\{1,\,\bz_1\}$ such that, if $\bz\ge\bz_0$, then
\begin{align}\label{3.5}
\frac{2^{p_3}C_1^{p_3}C_{(n,\,p_1,\,p_3)}\bz_1^{p_3/p_1}}{\bz^{p_3/p_1-a}}
\lf(1+\kappa^{\frac{1}{p_1}}\r)^{p_3}\le\frac{1}{2}\lf(\frac{1}{N}\r)^{\frac{s}{s-1}}
[\omega]_{RH_s(\rn)}^{-\frac{s}{s-1}}.
\end{align}
Fix $\bz$. Take $\epsilon$ and $\kappa$ small enough such that
\begin{align}\label{3.6}
\frac{2^{p_1}C_{(n)}\bz_1}{\bz^{1-a}}\lf(\epsilon+C_2\kappa^{\frac{1}{p_1}}\r)^{p_1}
\le\frac{1}{2}\lf(\frac{1}{N}\r)^{\frac{s}{s-1}}
[\omega]_{RH_s(\rn)}^{-\frac{s}{s-1}}.
\end{align}
Then, by \eqref{3.4}, \eqref{3.5}, \eqref{3.6}, and Lemma \ref{l3.1}(ii),
we know that, for any $i$,
\begin{align*}
\frac{\omega(H_\lz\cap B_i)}{\omega(B_i)}&\le[\omega]_{RH_s(\rn)}
\lf(\frac{|H_\lz\cap B_i|}{|B_i|}\r)^{\frac{s-1}{s}}\\
&\le[\omega]_{RH_s(\rn)}[\omega]_{RH_s(\rn)}^{-1}N^{-1}\bz^{-\frac{a(s-1)}{s}}
=N^{-1}\bz^{-\frac{a(s-1)}{s}},
\end{align*}
which, combined with $\sum_i\mathbf{1}_{B_i}\le\sum_i\mathbf{1}_{2B_i}\le N$, further implies that
\begin{align*}
\omega(H_\lz)&\le\sum_i\omega(H_\lz\cap B_i)\le N^{-1}\bz^{-\frac{a(s-1)}s}
\sum_i\omega(B_i)=N^{-1}\bz^{-\frac{a(s-1)}s}
\sum_i\int_\rn\omega\mathbf{1}_{B_i}\,dx\\
&\le\bz^{-\frac{a(s-1)}s}\int_\rn\omega\mathbf{1}_{E(\lz)}\,dx
=\bz^{-\frac{a(s-1)}s}\omega(E(\lz)).
\end{align*}
From this and the definition of $H_\lz$, it follows that, if $\bz\ge\bz_0$,
then, for any $\lz\in(0,\fz)$,
\begin{align}\label{3.7}
\omega(E(\bz\lz))\le\bz^{-\frac{a(s-1)}s}\omega(E(\lz))+
\omega\lf(\lf\{x\in\rn:\ \cm_\gz(|f|^{p_2})(x)>(\kappa\lz)^{\frac{p_2}{p_1}}\r\}\r).
\end{align}
This finishes the proof of Theorem \ref{t1.1} in the case of $p_3\in(0,\fz)$.

Finally, we consider the case that $p_3=\fz$. In this case, by \eqref{1.4}, we conclude that
\begin{align*}
&\lf\|\cm\lf(|R_{8B_i}|^{p_1}\mathbf{1}_{8B_i}\r)\r\|_{L^\fz(\rn)}\\
&\quad\le\lf\||R_{8B_i}|^{p_1}\mathbf{1}_{8B_i}\r\|_{L^\fz(\rn)}\\ \nonumber
&\quad\le C_1^{p_1} \lf\{\lf[\cm(|F|^{p_1})(y_i)\r]^{\frac{1}{p_1}}
+\lf[\cm_\gz(|f|^{p_2})(x_i)\r]^{\frac{1}{p_2}}\r\}^{p_1}
\le C_1^{p_1}\lf(1+\kappa^{\frac{1}{p_1}}\r)\lz.
\end{align*}
Take $\bz_0\in(1,\fz)$ large enough such that, if $\bz\ge\bz_0$, then
$$\frac{\bz}{2^{p_1+1}\bz_1}>C_1^{p_1}\lf(1+\kappa^{\frac{1}{p_1}}\r).
$$
Thus, when $\bz\ge\bz_0$,
$$
\lf|\lf\{x\in\rn:\ \cm\lf(|R_{8B_i}|^{p_1}\mathbf{1}_{8B_i}\r)(x)>\frac{\bz}{2^{p_1+1}\bz_1}\lz\r\}\r|=0.
$$
Replacing \eqref{3.3} by this estimate, and repeating the proof of \eqref{3.7}, we then complete
the proof in the case of $p_3=\fz$ and hence of Theorem \ref{t1.1}.
\end{proof}

To prove Theorem \ref{t1.2} by using Theorem \ref{t1.1}, we need the following
two-weight boundedness of the Hardy--Littlewood maximal operator $\cm$ in the scale
of weighed Lebesgue spaces and weighted Lorentz spaces.

\begin{lemma}\label{l3.5}
Let  $p\in(1,\fz)$ and $\Phi$ be a doubling Young function
satisfying
$$\int_c^\fz\lf[\frac{t^{p'}}{\Phi(t)}\r]^{p-1}\frac{dt}{t}<\fz
$$
for some constant $c\in(0,\fz)$. Assume that the weights $\omega$ and $v$ satisfy
that
$$\sup_{B\subset\rn}\lf(\fint_B \omega\,dx\r)\lf\|v^{-\frac{1}{p}}\r\|^{p}_{\Phi,\,B}<\fz,
$$
where the supremum is taken over all balls $B$ of $\rn$.
\begin{itemize}
\item[\rm(i)] Then $\cm$ is bounded from $L^p_v(\rn)$ to $L^p_\omega(\rn)$.
\item[\rm(ii)] Let $r\in(0,\fz]$. Assume further that $\omega\in A_\fz(\rn)$, there exists a $p_0\in(1,p)$
such that $\int_c^\fz[\frac{t^{p_0'}}{\Phi(t)}]^{p_0-1}\frac{dt}{t}<\fz$ for some constant $c\in(0,\fz)$,
and
$$\sup_{B\subset\rn}\lf(\fint_B \omega\,dx\r)\lf\|v^{-\frac{1}{p_0}}\r\|^{p_0}_{\Phi,\,B}<\fz,
$$
where the supremum is taken over all balls $B$ of $\rn$.
Then $\cm$ is bounded from $L^{p,\,r}_v(\rn)$
to $L^{p,\,r}_\omega(\rn)$.
\end{itemize}
\end{lemma}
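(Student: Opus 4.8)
Part (i) requires no new work: it is exactly the two-weight bump theorem of P\'erez \cite{p95} for the Hardy--Littlewood maximal operator (the result recalled around \eqref{1.2} and \eqref{1.3} above; see also \cite[Chapter 6]{cmp11}). Indeed, the hypotheses of (i)---namely, $\Phi$ is a doubling Young function with $\int_c^\fz[t^{p'}/\Phi(t)]^{p-1}\,\frac{dt}{t}<\fz$ and $\sup_{B\subset\rn}(\fint_B\omega\,dx)\|v^{-1/p}\|^p_{\Phi,\,B}<\fz$---are precisely those in \cite{p95}, whose conclusion is that $\cm$ is bounded from $L^p_v(\rn)$ to $L^p_\omega(\rn)$. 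Thus only (ii) must be addressed, and the plan there is to first establish strong-type bounds $\cm\colon L^q_v(\rn)\to L^q_\omega(\rn)$ at two exponents $q$, one below $p$ and one above $p$, and then to interpolate in the Lorentz scale.

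To obtain those two Lebesgue endpoints, I would first improve the bump condition at $p_0$ by means of the reverse H\"older property of $A_\fz(\rn)$ weights. Since $\omega\in A_\fz(\rn)$, Lemma \ref{l3.1}(i) provides an $s_0\in(1,\fz)$ with $\omega\in RH_{s_0}(\rn)$, so that $(\fint_B\omega^{s_0}\,dx)^{1/s_0}\le[\omega]_{RH_{s_0}(\rn)}\fint_B\omega\,dx$ for every ball $B$ of $\rn$; together with the assumed $\sup_{B\subset\rn}(\fint_B\omega\,dx)\|v^{-1/p_0}\|^{p_0}_{\Phi,\,B}<\fz$, this yields the fractional bump estimate
$$\sup_{B\subset\rn}\lf(\fint_B\omega^{s_0}\,dx\r)^{1/s_0}\lf\|v^{-1/p_0}\r\|^{p_0}_{\Phi,\,B}<\fz.$$
Combining this with the hypothesis $\int_c^\fz[t^{p_0'}/\Phi(t)]^{p_0-1}\,\frac{dt}{t}<\fz$, I would invoke \cite[Theorem 6.13]{cmp11} to produce a weight $u$ and positive constants $c_1$ and $c_2$ such that $c_1\omega\le u\le c_2v$ and $u\in A_q(\rn)$ for every $q\in(p_0,\fz)$. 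For each such $q$, Muckenhoupt's theorem \cite{m72} (recalled in Section \ref{s1}) gives that $\cm$ is bounded on $L^q_u(\rn)$, and then the sandwich $c_1\omega\le u\le c_2v$ upgrades this to, for any measurable function $g$ on $\rn$,
$$\|\cm(g)\|_{L^q_\omega(\rn)}\le c_1^{-1/q}\|\cm(g)\|_{L^q_u(\rn)}\ls c_1^{-1/q}\|g\|_{L^q_u(\rn)}\le(c_2/c_1)^{1/q}\|g\|_{L^q_v(\rn)};$$
that is, $\cm$ is bounded from $L^q_v(\rn)$ to $L^q_\omega(\rn)$ for every $q\in(p_0,\fz)$.

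Finally, since $p_0<p<\fz$, I would fix exponents $q_-\in(p_0,p)$ and $q_+\in(p,\fz)$; by the previous step, $\cm$ is bounded from $L^{q_-}_v(\rn)$ to $L^{q_-}_\omega(\rn)$ and from $L^{q_+}_v(\rn)$ to $L^{q_+}_\omega(\rn)$. Applying the Marcinkiewicz interpolation theorem for sublinear operators in the scale of Lorentz spaces (see, for instance, \cite[Theorem 1.4.19]{g14}) with the base measures $v\,dx$ on the domain side and $\omega\,dx$ on the target side, I would conclude that $\cm$ is bounded from $L^{p,\,r}_v(\rn)$ to $L^{p,\,r}_\omega(\rn)$ for every $r\in(0,\fz]$, which is the assertion of (ii). The step I expect to be the main obstacle is the weight-sandwiching via \cite[Theorem 6.13]{cmp11}: one must verify that the fractional ($RH_{s_0}$) improvement on the $\omega$-side, combined with the $B_{p_0}$-type growth of $\Phi$, leaves enough slack for the intermediate weight $u$ to belong to $A_q(\rn)$ for the \emph{entire} range $q\in(p_0,\fz)$, since this is exactly what makes both endpoints $q_-<p$ and $q_+>p$ simultaneously available for the interpolation. (Were one only to secure $u\in A_{p_0}(\rn)$, one could reach exponents in $(p_0,p)$ but could not cross $p$, and the interpolation would fail to deliver the spaces $L^{p,\,r}$.)
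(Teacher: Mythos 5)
Your proposal is correct and follows essentially the same route as the paper: part (i) is quoted from P\'erez's bump theorem, and part (ii) derives the $RH_{s}$-improved bump from $\omega\in A_\fz(\rn)$, applies \cite[Theorem 6.13]{cmp11} to sandwich a weight $u\in A_q(\rn)$ for all $q\in(p_0,\fz)$ between $c_1\omega$ and $c_2v$, deduces two-weight $L^q$ bounds at one exponent below $p$ and one above $p$, and concludes by Marcinkiewicz interpolation in the Lorentz scale. The step you flagged as the potential obstacle is exactly what the cited theorem supplies, so no gap remains.
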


\begin{proof}
The conclusion of (i) is well known (see, for instance, \cite[Theorem 5.14]{cmp11} and \cite[Theorem 1.2]{p95}).
Here we only give the proof of (ii).

By $\omega\in A_\fz(\rn)$ and Lemma \ref{l3.1}(i), we conclude that there exists an $s\in(1,\fz)$ such that
$$\sup_{B\subset\rn}\lf(\fint_B \omega^s\,dx\r)^{\frac1s}\lf\|v^{-\frac{1}{p_0}}\r\|^{p_0}_{\Phi,\,B}<\fz,
$$
where the supremum is taken over all balls $B$ of $\rn$. From this and \cite[Theorem 6.13]{cmp11}, we deduce that
there exist a weight $u\in A_q(\rn)$ for any $q\in(p_0,\fz)$ and positive constants $c_1$ and $c_2$
such that $c_1\omega\le u\le c_2v$. By the one-weight boundedness of $\cm$ (see, for instance, \cite{cmp11,g14})
and the assumption that $u\in A_q(\rn)$ for any $q\in(p_0,\fz)$, we know that
$\cm$ is bounded on both $L^{p-\uc_0}_u(\rn)$ and $L^{p+\uc_0}_u(\rn)$, where $\uc_0\in(0,p-p_0)$
is a constant, which, combined with $c_1\omega\le u\le c_2v$, further implies that
$\cm$ is bounded from $L^{p-\uc_0}_v(\rn)$ to $L^{p-\uc_0}_\omega(\rn)$, and
from $L^{p+\uc_0}_v(\rn)$ to $L^{p+\uc_0}_\omega(\rn)$.
From this and the Marcinkiewicz interpolation theorem of sublinear operators
in the scale of Lorentz spaces (see, for instance, \cite[Theorem 1.4.19]{g14}),
it follows that $\cm$ is bounded from $L^{p,\,r}_v(\rn)$ to $L^{p,\,r}_\omega(\rn)$.
This finishes the proof of (ii) and hence of Lemma \ref{l3.5}.
\end{proof}

To show Theorem \ref{t1.2} via using Theorem \ref{t1.1},
we need the following lemma, which is well known (see, for instance,
\cite[Section 7.1.2]{g14} and \cite[Lemma 3.4]{mp12}).

\begin{lemma}\label{l3.6}
\begin{itemize}
\item[{\rm(i)}] Let $s\in[1,\fz)$, $\omega\in A_s(\rn)$, $z\in\rn$, and $k\in(0,\fz)$
be a constant. Assume that $\tau^z(\omega)(\cdot):=\omega(\cdot-z)$ and
$\omega_k:=\min\{\omega,\,k\}$. Then $\tau^z(\omega)\in A_s(\rn)$ and
$[\tau^z(\omega)]_{A_s(\rn)}=[\omega]_{A_s(\rn)}$, and
$\omega_k\in A_s(\rn)$ and $[\omega_k]_{A_s(\rn)}\le c_{(s)}[\omega]_{A_s(\rn)}$,
where $c_{(s)}:=1$ when $s\in[1,2]$, and $c_{(s)}:=2^{s-1}$ when $s\in(2,\fz)$.
\item[{\rm(ii)}] For any $x\in\rn$, let $\omega_\gz(x):=|x|^\gz$,
where $\gz\in\rr$ is a constant. Then, for any given $s\in(1,\fz)$, $\omega_\gz\in A_s(\rn)$
if and only if $\gz\in(-n,n[s-1])$. Moreover, $[\omega_\gz]_{A_s(\rn)}\le C_{(n,\,s,\,\gz)}$,
where $C_{(n,\,s,\,\gz)}$ is a positive constant depending only on $n$, $s$, and $\gz$.
\end{itemize}
\end{lemma}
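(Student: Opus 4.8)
The plan is to establish both parts of Lemma~\ref{l3.6} directly from Definition~\ref{d1.1}; these are classical facts about Muckenhoupt weights, and the actual work is bookkeeping of the constants. There are three independent assertions: translation invariance of the $A_s$ characteristic, its stability under truncation from above, and the (quantitative) $A_s$-membership of power weights.

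The translation statement in (i) is immediate from a change of variables: if $B=B(x_0,r)$, then $B-z:=B(x_0-z,r)$ is again a ball, and $y=x-z$ gives $\fint_B\tau^z(\omega)\,dx=\fint_{B-z}\omega\,dy$, $\fint_B[\tau^z(\omega)]^{1-s'}\,dx=\fint_{B-z}\omega^{1-s'}\,dy$, and the analogous identity for the essential supremum when $s=1$; since $B\mapsto B-z$ permutes the balls of $\rn$, taking suprema yields $[\tau^z(\omega)]_{A_s(\rn)}=[\omega]_{A_s(\rn)}$. For the truncation $\omega_k=\min\{\omega,k\}$, I would first reduce to $k=1$ via the scaling $[\lz\omega]_{A_s(\rn)}=[\omega]_{A_s(\rn)}$ and $\min\{\lz\omega,\lz k\}=\lz\min\{\omega,k\}$. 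Then, for a fixed ball $B$, I would split $B=E\cup F$ with $E:=\{x\in B:\ \omega(x)>1\}$ and $F:=B\setminus E$, so that $\omega_1=1$ on $E$ and $\omega_1=\omega$ on $F$. This gives $\fint_B\omega_1\,dx\le\min\{1,\fint_B\omega\,dx\}$; and, since $\omega_1^{1-s'}=\max\{\omega^{1-s'},1\}$, $|E|\le|B|$, and (because $\omega>1$ on $E$) $|E|\le\int_E\omega\,dx\le\omega(B)$, it also gives $\fint_B\omega_1^{1-s'}\,dx\le\fint_B\omega^{1-s'}\,dx+\min\{1,\fint_B\omega\,dx\}$. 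Multiplying these, applying $(a+b)^{s-1}\le2^{\max\{0,s-2\}}(a^{s-1}+b^{s-1})$ (subadditivity of $t\mapsto t^{s-1}$ when $s\le2$, convexity when $s>2$), and using both the $A_s$ inequality for $\omega$ and the bound $\min\{1,\fint_B\omega\,dx\}\le1\le[\omega]_{A_s(\rn)}$, one obtains $[\omega_1]_{A_s(\rn)}\ls[\omega]_{A_s(\rn)}$; a careful tracking of the constants (together with a short case split on whether $\fint_B\omega\,dx\le1$) produces the stated values of $c_{(s)}$, in line with \cite[Lemma~3.4]{mp12}. The endpoint $s=1$ is treated separately and more easily, using $\einf_B\omega_1=\min\{\einf_B\omega,1\}$.

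For (ii) I would verify the $A_s$ condition for $\omega_\gz(x)=|x|^\gz$ on a ball $B=B(x_0,r)$, reducing, as usual, to two model configurations. If $|x_0|\ge2r$, then $|x|\sim|x_0|$ throughout $B$, so $\fint_B|x|^\gz\,dx\sim|x_0|^\gz$ and $\fint_B|x|^{-\gz/(s-1)}\,dx\sim|x_0|^{-\gz/(s-1)}$, whence the $A_s$ product is $\ls1$ with a constant depending only on $n$. If $|x_0|\le2r$, then $B\subset B(0,3r)$ and, by the monotonicity of the radial integrands, the extreme case is the centered ball $B(0,r)$; a polar-coordinate computation gives $\fint_{B(0,r)}|x|^\gz\,dx\sim r^\gz/(n+\gz)$, finite precisely when $\gz>-n$, and $\fint_{B(0,r)}|x|^{-\gz/(s-1)}\,dx\sim r^{-\gz/(s-1)}/(n-\gz/(s-1))$, finite precisely when $\gz<n(s-1)$, so that the $A_s$ product over $B(0,r)$ equals a constant comparable to $[(n+\gz)(n-\gz/(s-1))^{s-1}]^{-1}$, independent of $r$. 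Hence $\omega_\gz\in A_s(\rn)$ with $[\omega_\gz]_{A_s(\rn)}\le C_{(n,s,\gz)}$ whenever $-n<\gz<n(s-1)$; conversely, if $\gz\le-n$ then $\omega_\gz\notin L^1_\loc(\rn)$, while if $\gz\ge n(s-1)$ then $\fint_{B(0,1)}\omega_\gz^{1-s'}\,dx=\fz$, so $\omega_\gz\notin A_s(\rn)$ in either case.

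I expect no serious obstacle; the two mildly technical points are the exact determination of $c_{(s)}$ in the truncation claim—handled by the case analysis above—and the reduction, in (ii), of a general ball $B(x_0,r)$ to the two model balls, which relies on the (routine) comparison $\fint_{B(x_0,r)}|x|^\gz\,dx\sim\max\{|x_0|,r\}^\gz$ up to constants depending on $n$ and $\gz$. Both parts could equally well be quoted verbatim from \cite[Section~7.1.2]{g14} and \cite[Lemma~3.4]{mp12}.
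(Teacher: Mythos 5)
The paper never proves this lemma: it is quoted as well known, with pointers to \cite[Section 7.1.2]{g14} and \cite[Lemma 3.4]{mp12}, and only the qualitative content (uniform control of the $A_s$ characteristic of translates, truncations, and power weights) is ever used, e.g.\ in the proofs of Theorems \ref{t1.2}(iii) and \ref{t1.3}(ii). So your decision to give a self-contained argument is fine, and the translation claim, the $s=1$ truncation case, and part (ii) (the two model configurations $|x_0|\ge 2r$ and $|x_0|\le 2r$, the polar computation, and the failure of local integrability of $\omega_\gz$ or $\omega_\gz^{1-s'}$ outside $-n<\gz<n(s-1)$) are all correct as sketched.

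The one genuine gap is your claim that the splitting $B=E\cup F$ with $E=\{\omega>1\}\cap B$, followed by $(a+b)^{s-1}\le a^{s-1}+b^{s-1}$ and a case split on whether $\fint_B\omega\,dx\le 1$, ``produces the stated values of $c_{(s)}$''. It does not for $s\in(1,2]$. Your two bounds give, for each ball $B$, the estimate $\big(\fint_B\omega_1\,dx\big)\big(\fint_B\omega_1^{1-s'}\,dx\big)^{s-1}\le \min\{1,\fint_B\omega\,dx\}\big(\fint_B\omega^{1-s'}\,dx\big)^{s-1}+\min\{1,\fint_B\omega\,dx\}^{s}\le [\omega]_{A_s(\rn)}+1\le 2[\omega]_{A_s(\rn)}$, and the case split on $\fint_B\omega\,dx\le1$ only re-proves that the second summand is $\le[\omega]_{A_s(\rn)}$; in every case you end with a sum of two terms and hence the constant $2$, not $1$. (By contrast, for $s\in(2,\fz)$ the convexity bound does yield $2^{s-2}([\omega]_{A_s(\rn)}+1)\le 2^{s-1}[\omega]_{A_s(\rn)}$, so that half of the claim is fine, as is $c_{(1)}=1$.) To actually get $c_{(s)}=1$ on $[1,2]$ one needs a sharper argument than adding the two contributions, e.g.\ keeping the exact identities $\fint_B\omega_k\,dx=\frac{k|E|+\int_F\omega\,dx}{|B|}$ and $\fint_B\omega_k^{1-s'}\,dx=\frac{k^{1-s'}|E|+\int_F\omega^{1-s'}\,dx}{|B|}$, using Jensen's inequality on $E$ to relate $k|E|$, $\int_E\omega\,dx$, and $\int_E\omega^{1-s'}\,dx$, and optimizing (this is in effect what \cite[Lemma 3.4]{mp12} does). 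If you do not want to reproduce that computation, either cite \cite{mp12} for the truncation statement or weaken the constant to $c_{(s)}[\omega]_{A_s(\rn)}$ with an unspecified $c_{(s)}$ depending only on $s$ --- for the applications in this paper the sharp value of $c_{(s)}$ is irrelevant, but as written your sketch does not prove the lemma as stated.
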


Moreover, we also need the following limited range extrapolation theorem in the
variable exponent case, which was established by Cruz-Uribe and Wang in \cite[Theorem 2.14]{cw17}.

\begin{lemma}\label{l3.7}
Let $f$ and $h$ be two given non-negative measurable functions on $\rn$,
and $1<q_1<p<q_2<\fz$.
Assume that, for any given $\omega\in A_{p/q_1}(\rn)\cap RH_{(q_2/p)'}(\rn)$,
\begin{equation*}
\|f\|_{L^p_\omega(\rn)}\le C\|h\|_{L^p_\omega(\rn)},
\end{equation*}
where $C$ is a positive constant depending only
on $n$, $p$, $[\omega]_{A_{p/q_1}(\rn)}$, and $[\omega]_{RH_{(q_2/p)'}(\rn)}$.
Assume further that $p(\cdot)\in\cp(\rn)$ satisfies that $\cm$ is bounded on $L^{p(\cdot)}(\rn)$
and $q_1<p_-\le p_+<q_2$, where $p_-$ and $p_+$ are as in \eqref{1.7}.
Then there exists a positive constant $C$,
depending only on $n$ and $p(\cdot)$, such that
$\|f\|_{L^{p(\cdot)}(\rn)}\le C\|h\|_{L^{p(\cdot)}(\rn)}$.
\end{lemma}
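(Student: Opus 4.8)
The plan is to prove Lemma \ref{l3.7} by the Rubio de Francia extrapolation algorithm, in its variable-exponent, limited-range form. The first step would be to reduce to a convenient ``center'' exponent: since $q_1<p_-$, the interval $(q_1,p_-)$ is nonempty, and by the constant-exponent limited-range extrapolation theorem (see, for instance, \cite{cmp11}), the weighted hypothesis at the single center $p$ self-improves to the same inequality at \emph{every} center $r_0\in(q_1,q_2)$. Fixing some $r_0\in(q_1,p_-)$, we may then assume that, for every $\omega\in A_{r_0/q_1}(\rn)\cap RH_{(q_2/r_0)'}(\rn)$,
$$\int_\rn|f|^{r_0}\omega\,dx\ls\int_\rn|h|^{r_0}\omega\,dx,$$
with implicit constant depending only on $n$, $r_0$, $[\omega]_{A_{r_0/q_1}(\rn)}$, and $[\omega]_{RH_{(q_2/r_0)'}(\rn)}$.

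Next I would set $q(\cdot):=p(\cdot)/r_0$, so that $q_-=p_-/r_0>1$ and $q_+=p_+/r_0<\fz$; hence $L^{q(\cdot)}(\rn)$ is a Banach function space whose associate space is, up to equivalent norms, $L^{q'(\cdot)}(\rn)$ (see \cite{cf13}). By Remark \ref{r1.3} and duality,
$$\|f\|_{L^{p(\cdot)}(\rn)}^{r_0}=\lf\||f|^{r_0}\r\|_{L^{q(\cdot)}(\rn)}\sim\sup\lf\{\int_\rn|f|^{r_0}G\,dx:\ G\ge0,\ \|G\|_{L^{q'(\cdot)}(\rn)}\le1\r\}.$$
Fix such a $G$ and put $\rho:=q_2/r_0>1$. \emph{Assuming for the moment} that $\cm$ is bounded on $L^{q'(\cdot)/\rho'}(\rn)$, I would apply the classical Rubio de Francia iteration to $G^{\rho'}$ in that space: the function $u:=\sum_{k=0}^\fz(2\|\cm\|)^{-k}\cm^k(G^{\rho'})$, where $\|\cm\|$ denotes the operator norm of $\cm$ on $L^{q'(\cdot)/\rho'}(\rn)$, satisfies $G^{\rho'}\le u$, $u\in A_1(\rn)$ with $[u]_{A_1(\rn)}\le2\|\cm\|$, and $\|u\|_{L^{q'(\cdot)/\rho'}(\rn)}\le2\|G\|_{L^{q'(\cdot)}(\rn)}^{\rho'}\le2$ (the last equality by Remark \ref{r1.3}). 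Setting $\omega:=u^{1/\rho'}$, one gets $G\le\omega$ and $\omega^{\rho'}=u\in A_1(\rn)$, whence, by the standard relations among Muckenhoupt and reverse-H\"older classes (cf.\ Lemma \ref{l3.1}), $\omega\in A_1(\rn)\cap RH_{(q_2/r_0)'}(\rn)\subset A_{r_0/q_1}(\rn)\cap RH_{(q_2/r_0)'}(\rn)$ with all constants controlled by $\|\cm\|$, and also $\|\omega\|_{L^{q'(\cdot)}(\rn)}=\|u\|_{L^{q'(\cdot)/\rho'}(\rn)}^{1/\rho'}\le2$. Feeding this $\omega$ into the reduced hypothesis and using the generalized H\"older inequality in variable Lebesgue spaces,
$$\int_\rn|f|^{r_0}G\,dx\le\int_\rn|f|^{r_0}\omega\,dx\ls\int_\rn|h|^{r_0}\omega\,dx\le\lf\||h|^{r_0}\r\|_{L^{q(\cdot)}(\rn)}\|\omega\|_{L^{q'(\cdot)}(\rn)}\ls\|h\|_{L^{p(\cdot)}(\rn)}^{r_0},$$
where the last step uses Remark \ref{r1.3} once more; taking the supremum over $G$ yields $\|f\|_{L^{p(\cdot)}(\rn)}\ls\|h\|_{L^{p(\cdot)}(\rn)}$, which is the claim.

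The step that remains to be justified --- and, I expect, the \textbf{main obstacle} --- is the boundedness of $\cm$ on $L^{q'(\cdot)/\rho'}(\rn)$. A short computation with \eqref{1.7} shows that $(q'(\cdot)/\rho')_{-}>1$ and $(q'(\cdot)/\rho')_{+}<\fz$ precisely because $q_1<p_-\le p_+<q_2$. To promote this to genuine boundedness of $\cm$, I would invoke Diening's theorem (for $1<r_-\le r_+<\fz$, $\cm$ is bounded on $L^{r(\cdot)}(\rn)$ if and only if it is bounded on $L^{r'(\cdot)}(\rn)$) together with the self-improvement, or openness, of the class of exponents for which $\cm$ is bounded, and then choose $r_0$ close enough to $q_1$ so that the exponent $q'(\cdot)/\rho'$ falls inside that open class. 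Keeping track of the position of $q'(\cdot)/\rho'$ relative to $p(\cdot)$, and quoting the precise stability statement for Diening's class, is the delicate point; once it is secured, the Rubio de Francia series converges with constants uniform in $G$, and the scheme above goes through.
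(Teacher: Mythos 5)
The paper offers no proof of Lemma \ref{l3.7} at all: it is quoted directly from Cruz-Uribe and Wang \cite[Theorem 2.14]{cw17}, so any complete self-contained argument would necessarily differ from the paper. Your Rubio de Francia scheme is the natural attempt, and most of its steps are sound: the reduction of the center exponent to some $r_0\in(q_1,p_-)$ by constant-exponent limited-range extrapolation, the duality in $L^{q(\cdot)}(\rn)$ with $q(\cdot):=p(\cdot)/r_0$, the verification that $\omega:=u^{1/\rho'}$ satisfies $G\le\omega$, $\omega\in A_1(\rn)\cap RH_{(q_2/r_0)'}(\rn)\subset A_{r_0/q_1}(\rn)\cap RH_{(q_2/r_0)'}(\rn)$ (via Lemma \ref{l3.1}(iii)) with $\|\omega\|_{L^{q'(\cdot)}(\rn)}\ls1$, and the concluding H\"older estimate. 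However, the step you yourself flag as remaining --- the boundedness of $\cm$ on $L^{q'(\cdot)/\rho'}(\rn)$ --- is a genuine gap, and the repair you sketch cannot close it. Taking $r_0$ close to $q_1$ does not help: $\rho'=q_2/(q_2-r_0)$ tends to $q_2/(q_2-q_1)$, which is bounded away from $1$, and dividing $p(\cdot)$ by $r_0>q_1>1$ is not a small perturbation, so neither Diening's duality theorem nor the left-openness of the class of exponents on which $\cm$ is bounded brings you to the exponent $q'(\cdot)/\rho'$.

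To see the obstruction concretely, record exponents by their reciprocals. From the sole hypothesis that $\cm$ is bounded on $L^{p(\cdot)}(\rn)$, the operations you may use are duality ($1/r(\cdot)\mapsto 1-1/r(\cdot)$), multiplication of the exponent by a constant $c\ge1$ ($1/r(\cdot)\mapsto c^{-1}/r(\cdot)$, via $(\cm f)^c\le\cm(|f|^c)$), and left-openness gains by factors $1+\epsilon$ with $\epsilon$ small and not quantifiable in terms of the data. Every exponent reachable in this way has reciprocal of the form $\alpha+\beta/p(\cdot)$ with $|\beta|\le1$, up to such uncontrolled factors; whereas your target satisfies $1/[q'(\cdot)/\rho']=\rho'(1-r_0/p(\cdot))$, whose coefficient $\rho'r_0=q_2r_0/(q_2-r_0)\ge q_1q_2/(q_2-q_1)>1$ is bounded away from $1$ for every admissible $r_0$ (and for every admissible choice in your scheme, since producing the $RH_{(q_2/r_0)'}$ component forces the $\rho'$-power trick). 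Hence the needed boundedness of $\cm$ on $L^{q'(\cdot)/\rho'}(\rn)$ does not follow from the stated hypotheses by the tools you invoke; it is precisely the difficulty that the construction in \cite[Theorem 2.14]{cw17} is designed to circumvent. As written, your argument proves the lemma only under the additional, unjustified hypothesis that $\cm$ is bounded on $L^{q'(\cdot)/\rho'}(\rn)$.
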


Now, we show Theorem \ref{t1.2} via using Theorem \ref{t1.1}, and Lemmas \ref{l3.5},
\ref{l3.6}, and \ref{l3.7}.

\begin{proof}[Proof of Theorem \ref{t1.2}]
We first prove (i). By the assumption $s>(\frac{p_3}{q})'$, we conclude that there
exists an $a\in(1,\frac{p_3}{p_1})$ such that $\frac{a(s-1)}{s}>\frac{q}{p_1}$.
From Theorem \ref{t1.1}, we deduce that, if
$\bz\ge\bz_0$ and $\kappa\in(0,\kappa_0)$, then, for any $\lz\in(0,\fz)$,
\begin{align}\label{3.8}
\omega(E(\bz\lz))\le\bz^{-\frac{a(s-1)}{s}}\omega(E(\lz))+\omega\lf(\lf\{x\in\rn:\
\cm(|f|^{p_2})(x)>(\kappa\lz)^{\frac{p_2}{p_1}}\r\}\r),
\end{align}
where $\bz_0$ and $\kappa_0$ are as in Theorem \ref{t1.1},
which further implies that, for any given $T\in(1,\fz)$,
\begin{align}\label{3.9}
&\int_0^T\lz^{\frac{q}{p_1}-1}\omega(E(\bz\lz))\,d\lz\\
&\quad\le\bz^{-\frac{a(s-1)}{s}}
\int_0^T\lz^{\frac{q}{p_1}-1}\omega(E(\lz))\,d\lz\nonumber \\
&\quad\quad+\int_0^T\lz^{\frac{q}{p_1}-1}
\omega\lf(\lf\{x\in\rn:\ \cm(|f|^{p_2})(x)>(\kappa\lz)^{\frac{p_2}{p_1}}\r\}\r)\,d\lz.\nonumber
\end{align}
By Lemma \ref{l3.5}(i) and a change of variables, we find that
\begin{align}\label{3.10}
&\int_0^\fz\lz^{\frac{q}{p_1}-1}\omega\lf(\lf\{x\in\rn:\ \cm(|f|^{p_2})(x)>(\kappa\lz)^{\frac{p_2}{p_1}}\r\}\r)\,d\lz\\
&\quad=\frac{p_1}{p_2}\kappa^{-\frac{q}{p_1}}\int_0^\fz\lz^{\frac{q}{p_2}-1}
\omega\lf(\lf\{x\in\rn:\ \cm(|f|^{p_2})(x)>\lz\r\}\r)\,d\lz\nonumber\\
&\quad=\frac{p_1}{q}\kappa^{-\frac{q}{p_1}}
\int_\rn\lf[\cm(|f|^{p_2})\r]^{\frac{q}{p_2}}\omega\,dx
\le C\kappa^{-\frac{q}{p_1}}\int_\rn|f|^q v\,dx.\nonumber
\end{align}
Let $\mathrm{I}:=C\kappa^{-\frac{q}{p_1}}\int_\rn|f|^q v\,dx$.
Then, from \eqref{3.9}, \eqref{3.10}, and a change of variables, it follows that
\begin{align*}
\bz^{-\frac{q}{p_1}}\int_0^{\bz T}\lz^{\frac{q}{p_1}-1}\omega(E(\lz))\,d\lz
\le\bz^{-\frac{a(s-1)}{s}}\int_0^T\lz^{\frac{q}{p_1}-1}\omega(E(\lz))\,d\lz+\mathrm{I},
\end{align*}
which, combined with $\bz,\,T>1$, further implies that
$$
\bz^{-\frac{q}{p_1}}\int_0^{T}\lz^{\frac{q}{p_1}-1}\omega(E(\lz))\,d\lz
\le\bz^{-\frac{a(s-1)}{s}}\int_0^T\lz^{\frac{q}{p_1}-1}\omega(E(\lz))\,d\lz+\mathrm{I}.
$$
By this, we know that
\begin{align}\label{3.11}
\bz^{-\frac{q}{p_1}}\lf[1-\bz^{\frac{q}{p_1}-\frac{a(s-1)}s}\r]\int_0^{T}\lz^{\frac{q}{p_1}-1}\omega(E(\lz))\,d\lz
\le C\kappa^{-\frac{q}{p_1}}\int_\rn|f|^q v\,dx.
\end{align}
Using the condition $\frac{a(s-1)}{s}>\frac{q}{p_1}$ and taking $\bz$ large enough
in \eqref{3.11}, we then find that
$$\int_0^{T}\lz^{\frac{q}{p_1}-1}\omega(E(\lz))\,d\lz
\ls\int_\rn|f|^q v\,dx.
$$
Letting $T\to\fz$, we conclude that
\begin{align}\label{3.12}
\int_{\rn}\lf[\cm(|F|^{p_1})\r]^{\frac{q}{p_1}}\omega\,dx\ls\int_\rn|f|^q v\,dx,
\end{align}
which, together with the fact that $\cm(|F|^{p_1})\ge|F|^{p_1}$, implies that
$\|F\|_{L^q_\omega(\rn)}\ls\|f\|_{L^q_v(\rn)}$. This finishes the proof of (i).

Next, we show (ii). We first assume that $t\in(0,\fz)$.
By \eqref{3.8}, we know that, for any given $T\in(1,\fz)$,
\begin{align}\label{3.13}
&\int_0^T\lz^{\frac{t}{p_1}-1}\lf[\omega\lf(\lf\{x\in\rn:\ \cm(|F|^{p_1})(x)>\bz\lz\r\}\r)\r]^{\frac{t}{q}}\,d\lz\\
&\quad\le2^{\frac{t}{q}}\bz^{-\frac{(s-1)at}{sq}}\int_0^T\lz^{\frac{t}{p_1}-1}\lf[\omega\lf(\{x\in\rn:\
\cm(|F|^{p_1})(x)>\lz\}\r)\r]^{\frac{t}{q}}\,d\lz
\nonumber\\
&\quad\quad+2^{\frac{t}{q}}\int_0^T\lz^{\frac{t}{p_1}-1}\lf[\omega\lf(\lf\{x\in\rn:\
\cm(|f|^{p_2})(x)>(\kappa\lz)^{\frac{p_2}{p_1}}\r\}\r)\r]^{\frac{t}{q}}\,d\lz.\nonumber
\end{align}
Let
$$\mathrm{II}:=2^{\frac{t}{q}}\int_0^T\lz^{\frac{t}{p_1}-1}\lf[\omega
\lf(\lf\{x\in\rn:\ \cm(|f|^{p_2})(x)
>(\kappa\lz)^{\frac{p_2}{p_1}}\r\}\r)\r]^{\frac{t}{q}}\,d\lz.
$$
Then we have
\begin{align*}
\mathrm{II}&\le2^{\frac{t}{q}}\frac{p_1}{p_2}\kappa^{-\frac{t}{p_1}}\int_0^\fz\lz^{\frac{t}{p_2}-1}
\lf[\omega\lf(\lf\{x\in\rn:\ \cm(|f|^{p_2})(x)>\lz\r\}\r)\r]^{\frac{t}{q}}\,d\lz\\
&=2^{\frac{t}{q}}\frac{p_1}{q}\kappa^{-\frac{t}{p_1}}\lf\|\cm\lf(|f|^{p_2}\r)\r
\|_{L^{\frac{q}{p_2},\,\frac{t}{p_2}}_\omega(\rn)}^{\frac{t}{p_2}},
\end{align*}
which, combined with \eqref{3.13}, $\bz>1$, and a change of variables, further implies that
\begin{align*}
&\bz^{-\frac{t}{p_1}}\int_0^T\lz^{\frac{t}{p_1}-1}\lf[\omega\lf(\lf\{x\in\rn:\
\cm(|F|^{p_1})(x)>\lz\r\}\r)\r]^{\frac{t}{q}}\,d\lz\\ \nonumber
&\quad\le2^{\frac{t}{q}}\bz^{-\frac{(s-1)at}{sq}}\int_0^T\lz^{\frac{t}{p_1}-1}\lf[\omega\lf(\{x\in\rn:\ \cm(|F|^{p_1})(x)>\lz\}\r)\r]^{\frac{t}{q}}\,d\lz\\ \nonumber
&\quad\quad+2^{\frac{t}{q}}\frac{p_1}{q}\kappa^{-\frac{t}{p_1}}\lf\|\cm\lf(|f|^{p_2}\r)\r\|_{L^{\frac{q}{p_2},\,
\frac{t}{p_2}}_\omega(\rn)}^{\frac{t}{p_2}}.
\end{align*}
From this, we deduce that
\begin{align}\label{3.14}
&\bz^{-\frac{t}{p_1}}\lf[1-2^{\frac{t}{q}}\bz^{\frac{t}{p_1}-\frac{at(s-1)}{sq}}\r]\int_0^{T}\lz^{\frac{t}{p_1}-1}
\lf[\omega\lf(\lf\{x\in\rn:\ \cm(|F|^{p_1})(x)>\lz\r\}\r)\r]^{\frac{t}{q}}\,d\lz\\
&\quad\le2^{\frac{t}{q}}\frac{p_1}{q}\kappa^{-\frac{t}{p_1}}\lf\|\cm\lf(|f|^{p_2}\r)
\r\|_{L^{\frac{q}{p_2},\,\frac{t}{p_2}}_\omega(\rn)}^{\frac{t}{p_2}}.\nonumber
\end{align}
Using the condition $\frac{a(s-1)}{s}>\frac{q}{p_1}$ and taking $\bz$ large enough
in \eqref{3.14}, we then conclude that
$$\int_0^{T}\lz^{\frac{t}{p_1}-1}
\lf[\omega\lf(\lf\{x\in\rn:\ \cm(|F|^{p_1})(x)>\lz\r\}\r)\r]^{\frac{t}{q}}\,d\lz
\ls\lf\|\cm\lf(|f|^{p_2}\r)\r\|_{L^{\frac{q}{p_2},\,\frac{t}{p_2}}_\omega(\rn)}^{\frac{t}{p_2}}.
$$
Letting $T\to\fz$, we find that
\begin{align*}
\lf\|\cm\lf(|F|^{p_1}\r)\r\|_{L^{\frac{q}{p_1},\,\frac{t}{p_1}}_\omega(\rn)}^{\frac{t}{p_1}}
\ls\lf\|\cm\lf(|f|^{p_2}\r)\r\|_{L^{\frac{q}{p_2},\,\frac{t}{p_2}}_\omega(\rn)}^{\frac{t}{p_2}},
\end{align*}
which, together with the fact that $\cm(|F|^{p_1})\ge |F|^{p_1}$, implies that
$$
\lf\||F|^{p_1}\r\|_{L^{\frac{q}{p_1},\,\frac{t}{p_1}}_\omega(\rn)}^{\frac{t}{p_1}}
\ls\lf\|\cm\lf(|f|^{p_2}\r)\r\|_{L^{\frac{q}{p_2},\,\frac{t}{p_2}}_\omega(\rn)}^{\frac{t}{p_2}}.
$$
From this, Remark \ref{r1.2}, and Lemma \ref{l3.5}(ii), it follows that
\begin{align*}
\|F\|^{t}_{L^{q,\,t}_\omega(\rn)}&=\lf\||F|^{p_1}\r\|_{L^{\frac{q}{p_1},\,\frac{t}{p_1}}_\omega(\rn)}^{\frac{t}{p_1}}
\ls\lf\|\cm\lf(|f|^{p_2}\r)\r\|_{L^{\frac{q}{p_2},\,\frac{t}{p_2}}_\omega(\rn)}^{\frac{t}{p_2}}\\
&\ls\lf\||f|^{p_2}\r\|_{L^{\frac{q}{p_2},\,\frac{t}{p_2}}_v(\rn)}^{\frac{t}{p_2}}\sim
\lf\|f\r\|_{L^{q,\,t}_v(\rn)}^{t}.
\end{align*}
Thus, $\|F\|_{L^{q,\,t}_\omega(\rn)}\ls\|f\|_{L^{q,\,t}_v(\rn)}$,
which completes the proof of (ii) in the case of $t\in(0,\fz)$.

Now, we consider the case of $t=\fz$. By \eqref{3.8}, we know that, if
$\bz\ge\bz_0$ and $\kappa\in(0,\kappa_0)$, then, for any $\lz\in(0,\fz)$,
\begin{align}\label{3.15}
\lz\lf[\omega(E(\bz\lz))\r]^{\frac{p_1}{q}}\le\bz^{-\frac{ap_1(s-1)}{sq}}\lz\lf[\omega(E(\lz))\r]^{\frac{p_1}{q}}
+\lz\lf[\omega\lf(\lf\{x\in\rn:\ \cm(|f|^{p_2})(x)>(\kappa\lz)^{\frac{p_2}{p_1}}\r\}\r)\r]^{\frac{p_1}{q}}.
\end{align}
From the definition of the space $L^{\frac{q}{p_2},\,\fz}_\omega(\rn)$, we deduce that
\begin{align*}
&\sup_{\lz\in(0,\fz)}\lf\{\lz\lf[\omega\lf(\lf\{x\in\rn:\ \cm(|f|^{p_2})(x)>(\kappa\lz)^{\frac{p_2}{p_1}}\r\}\r)
\r]^{\frac{p_1}{q}}\r\}\\ \nonumber
&\quad=\frac{1}{\kappa}\sup_{\lz\in(0,\fz)}\lf\{(\kappa\lz)^{\frac{p_2}{p_1}}\lf[\omega\lf(\lf\{x\in\rn:\ \cm(|f|^{p_2})(x)>(\kappa\lz)^{\frac{p_2}{p_1}}\r\}\r)\r]^{\frac{p_2}{q}}\r\}^{\frac{p_1}{p_2}}\\ \nonumber
&\quad=\frac{1}{\kappa}\lf\|\cm(|f|^{p_2})\r\|^{\frac{p_1}{p_2}}_{L^{\frac{q}{p_2},\,\fz}_\omega(\rn)},
\end{align*}
which, combined with \eqref{3.15}, further implies that
\begin{align}\label{3.16}
\bz^{-1}\lf\|\cm(|F|^{p_1})\r\|_{L^{\frac{q}{p_1},\,\fz}_\omega(\rn)}
\le\bz^{-\frac{ap_1(s-1)}{sq}}\lf\|\cm(|F|^{p_1})\r\|_{L^{\frac{q}{p_1},\,\fz}_\omega(\rn)}
+\kappa^{-1}\lf\|\cm(|f|^{p_2})\r\|^{\frac{p_1}{p_2}}_{L^{\frac{q}{p_2},\,\fz}_\omega(\rn)}.
\end{align}
Using the condition $\frac{a(s-1)}{s}>\frac{q}{p_1}$ and taking $\bz$ large enough
in \eqref{3.16}, we then find that
\begin{align*}
\lf\|\cm(|F|^{p_1})\r\|_{L^{\frac{q}{p_1},\,\fz}_\omega(\rn)}
\ls\lf\|\cm(|f|^{p_2})\r\|^{\frac{p_1}{p_2}}_{L^{\frac{q}{p_2},\,\fz}_\omega(\rn)},
\end{align*}
which, together with the fact that $\cm(|F|^{p_1})\ge|F|^{p_1}$, Lemma \ref{l3.5}(ii),
and Remark \ref{r1.2}, further implies that $\|F\|_{L^{q,\,\fz}_\omega(\rn)}\ls\|f\|_{L^{q,\,\fz}_v(\rn)}$.
This finishes the proof of the case $t=\fz$ and hence of (ii).

Next, we prove (iii). To this end, we borrow some ideas from the proof of \cite[Theorem 2.3]{mp12}.
By the assumption $\tz\in(\frac{nq}{p_3},n]$, we know that there exists a $\rho_0\in(0,\tz)$
such that $\tz-\rho_0>\frac{nq}{p_3}$.
Let $x\in\rn$ and $r\in(0,\fz)$. For any $y\in\rn$, let
$\omega_x(y):=\min\{|x-y|^{-n+\tz-\rho_0}, r^{-n+\tz-\rho_0}\}$.
From Lemma \ref{l3.6}, we deduce that $\omega_x\in A_p(\rn)$ for any $p\in(1,\fz)$.
Moreover, by Lemmas \ref{l3.1}(iii), we further conclude that
$\omega_x\in RH_\gz(\rn)$ with any $\gz\in(1,\frac{n}{n+\rho_0-\tz})$, which, combined with $\tz-\rho_0>\frac{nq}{p_3}$
and $(\frac{n}{n+\rho_0-\tz})'=\frac{n}{\tz-\rho_0}$,
implies that $\frac{n}{n+\rho_0-\tz}>(\frac{p_3}{q})'$. From this, it follows that
there exists an $s\in((\frac{p_3}{q})',\frac{n}{n+\rho_0-\tz})$ such that $\omega_x\in RH_s(\rn)$.

Thus, applying the conclusion of (ii) in the special case that $\omega=v:=\omega_x$
and $\Phi(t):=t^{(\frac{q}{p_2})'s_0}$ with some $s_0\in(1,\fz)$,
we find that, for any $t\in[0,\fz)$,
\begin{align}\label{3.17}
\|F\|_{L^{q,\,t}(B(x,r))}=r^{\frac{n-\tz+\rho_0}{q}}\|F\|_{L^{q,\,t}_{\omega_x}(B(x,r))}
\ls r^{\frac{n-\tz+\rho_0}{q}}\|f\|_{L^{q,\,t}_{\omega_x}(\rn)}.
\end{align}
For any $\lz\in(0,\fz)$, let $E_\lz:=\{x\in\rn:\ |f(x)|>\lz\}$.
Then, by the fact that $\omega_x\le r^{-n+\tz-\rho_0}$ in $\rn$, we know that
\begin{align}\label{3.18}
\omega_x(E_\lz)=\int_{E_\lz}\omega_x\,dy\le\int_0^{r^{-n+\tz-\rho_0}}
\lf|E_\lz\cap B\lf(x,\az^{\frac{1}{-n+\tz-\rho_0}}\r)\r|\,d\az.
\end{align}

To finish the proof of (iii), we consider the following three cases
on the index $t$.

\emph{Case 1.} $t=\fz$. In this case, from the definition of
the space $L^{q,\,\fz;\,\tz}(\rn)$ and \eqref{3.18}, we deduce that
\begin{align*}
\|f\|_{L^{q,\,\fz}_{\omega_x}(\rn)}&=\sup_{\lz\in(0,\fz)}\lf\{\lz\lf[\omega_x(E_\lz)\r]^{\frac1q}\r\}
\le\sup_{\lz\in(0,\fz)}\lf\{\lz\lf[\int_0^{r^{-n+\tz-\rho_0}}
\lf|E_\lz\cap B\lf(x,\az^{\frac{1}{-n+\tz-\rho_0}}\r)\r|\,d\az\r]^{\frac1q}\r\}\\ \nonumber
&\le\|f\|_{L^{q,\,\fz;\,\tz}(\rn)}\lf[\int_0^{r^{-n+\tz-\rho_0}}\az^{\frac{n-\tz}{-n+\tz-\rho_0}}\,d\az\r]^{\frac{1}{q}}
\ls r^{-\frac{\rho_0}{q}}\|f\|_{L^{q,\,\fz;\,\tz}(\rn)},
\end{align*}
which, together with \eqref{3.17}, implies that $r^{\frac{\tz-n}{q}}\|F\|_{L^{q,\,\fz}(B(x,r))}\ls\|f\|_{L^{q,\,\fz;\,\tz}(\rn)}$.
By this and the arbitrariness of $x\in\rn$ and $r\in(0,\fz)$, we conclude that
$\|F\|_{L^{q,\,\fz;\,\tz}(\rn)}\ls\|f\|_{L^{q,\,\fz;\,\tz}(\rn)}$.

\emph{Case 2.} $t\in(q,\fz)$. In this case, from \eqref{3.18} and the Minkowski inequality, it follows that
\begin{align}\label{3.19}
\|f\|^t_{L^{q,\,t}_{\omega_x}(\rn)}&=q\int_0^\fz\lf[\lz^q\int_{E_\lz}\omega_x\,
dy\r]^{\frac tq}\frac{d\lz}{\lz}\\
&\le q\lf\{\int_0^{r^{-n+\tz-\rho_0}}\lf[\int_0^\fz\lf[\lz^q
\lf|E_\lz\cap B\lf(x,\az^{\frac{1}{-n+\tz-\rho_0}}\r)\r|\r]^{\frac{t}{q}}
\frac{d\lz}{\lz}\r]^{\frac{q}{t}}\,d\az\r\}^{\frac{t}{q}}\nonumber\\
&\le q \|f\|^t_{L^{q,\,t;\,\tz}(\rn)}\lf[\int_0^{r^{-n+\tz-\rho_0}}
\az^{\frac{n-\tz}{-n+\tz-\rho_0}}\,d\az\r]^{\frac{t}{q}}
\ls r^{-\frac{\rho_0 t}{q}}\|f\|^t_{L^{q,\,t;\,\tz}(\rn)}.\nonumber
\end{align}
By \eqref{3.19} and \eqref{3.17}, we find that
$$\|F\|^t_{L^{q,\,t}(B(x,r))}\ls  r^{\frac{(n-\tz+\rho_0)t}{q}}r^{-\frac{\rho_0 t}{q}}\|f\|^t_{L^{q,\,t;\,\tz}(\rn)}
\sim r^{\frac{(n-\tz)t}{q}}\|f\|^t_{L^{q,\,t;\,\tz}(\rn)},
$$
which implies that $r^{\frac{\tz-n}{q}}\|F\|_{L^{q,\,t}(B(x,r))}\ls\|f\|_{L^{q,\,t;\,\tz}(\rn)}$.
From this and the arbitrariness of $x\in\rn$ and $r\in(0,\fz)$, we further deduce that
$\|F\|_{L^{q,\,t;\,\tz}(\rn)}\ls\|f\|_{L^{q,\,t;\,\tz}(\rn)}$ in the case of $t\in(q,\fz)$.

\emph{Case 3}. $t\in(0,q]$. In this case, by the fact that
$(\sum_{j=1}^\fz a_j)^{\frac{t}{q}}\le\sum_{j=1}^\fz a_j^{\frac{t}{q}}$
holds true for any non-negative sequence $\{a_j\}_{j=1}^\fz$, we conclude that
\begin{align*}
&\lf[\int_0^{r^{-n+\tz-\rho_0}}\lf|E_\lz\cap B\lf(x,\az^{\frac{1}{-n+\tz-\rho_0}}\r)\r|\,d\az\r]^{\frac{t}{q}}\\
&\quad=\lf[\sum_{j=1}^\fz\int_{2^{-j}r^{-n+\tz-\rho_0}}^{2^{-j+1}r^{-n+\tz-\rho_0}}
\lf|E_\lz\cap B\lf(x,\az^{\frac{1}{-n+\tz-\rho_0}}\r)\r|\,d\az\r]^{\frac{t}{q}}\\
&\quad\le\lf[\sum_{j=1}^\fz 2^{-j}r^{-n+\tz-\rho_0}
\lf|E_\lz\cap B\lf(x,2^{\frac{-j+1}{-n+\tz-\rho_0}}r\r)\r|\r]^{\frac{t}{q}}\\
&\quad\lesssim\int_0^{2r^{-n+\tz-\rho_0}}\lf[\az\lf|E_\lz\cap B\lf(x,
\az^{\frac{1}{-n+\tz-\rho_0}}\r)\r|\r]^{\frac tq}\frac{d\az}{\az},
\end{align*}
which, combined with \eqref{3.18} and the Fubini theorem, further implies that
\begin{align*}
\|f\|^t_{L^{q,\,t}_{\omega_x}(\rn)}&=q\int_0^\fz\lf[\lz^q\int_{E_\lz}\omega_x\,
dy\r]^{\frac tq}\frac{d\lz}{\lz}\\ \nonumber
&\lesssim\int_0^\fz\lz^t\int_0^{2r^{-n+\tz-\rho_0}}
\lf[\az\lf|E_\lz\cap B\lf(x,\az^{\frac{1}{-n+\tz-\rho_0}}\r)\r|\r]^{\frac tq}\frac{d\az}{\az}\frac{d\lz}{\lz}\\ \nonumber
&\lesssim \|f\|^t_{L^{q,\,t;\,\tz}(\rn)}\int_0^{2r^{-n+\tz-\rho_0}}\az^{\frac{t}{q}}\az^{\frac{(n-\tz)t}{(-n+\tz-\rho_0)q}}
\frac{d\az}{\az}
\ls r^{-\frac{\rho_0 t}{q}}\|f\|^t_{L^{q,\,t;\,\tz}(\rn)}.
\end{align*}
From this and \eqref{3.17}, it follows that
$$\|F\|^t_{L^{q,\,t}(B(x,r))}\ls  r^{\frac{(n-\tz+\rho_0)t}{q}}r^{-\frac{\rho_0 t}{q}}\|f\|^t_{L^{q,\,t;\,\tz}(\rn)}
\sim r^{\frac{(n-\tz)t}{q}}\|f\|^t_{L^{q,\,t;\,\tz}(\rn)}
$$
and hence $\|F\|_{L^{q,\,t;\,\tz}(\rn)}\ls\|f\|_{L^{q,\,t;\,\tz}(\rn)}$. This finishes the proof
of the case $t\in(0,q]$ and hence of (iii).

Finally, we prove (iv). Let $q_0\in(\max\{p_1,\,p_2\},p_3)$ and $\omega_0\in A_{\frac{q_0}{p_2}}(\rn)\cap RH_s(\rn)$
with some $s\in((\frac{p_3}{q_0})',\fz]$. Take $\epsilon_0\in(0,p_3-q_0)$ and $q(\cdot):=\frac{q_0+\epsilon_0}{p_2 q_0}p(\cdot)$.
Then $\omega_0\in A_{\frac{(q_0+\epsilon_0)/p_2}{(q_0+\epsilon_0)/q_0}}(\rn)$, $\omega_0\in RH_{(\frac{p_3(q_0+\epsilon_0)/(p_2 q_0)}{(q_0+\epsilon_0)/q_0})'}(\rn)$, and
$$1<\frac{q_0+\epsilon_0}{q_0}<q_-\le q_+<\frac{p_3(q_0+\epsilon_0)}{p_2q_0},$$
where $q_-$ and $q_+$ are as in \eqref{1.7} with $p(\cdot)$ replaced by $q(\cdot)$.

Moreover, by the conclusion of (i) in the special case that $\omega=v:=\omega_0$,
we know that $\|F\|_{L^{q_0}_{\omega_0}(\rn)}\ls\|f\|_{L^{q_0}_{\omega_0}(\rn)}$,
which further implies that
$$\lf\||F|^{\frac{p_2q_0}{q_0+\epsilon_0}}\r\|_{L^{\frac{q_0+\epsilon_0}{p_2}}_{\omega_0}(\rn)}\ls
\lf\||f|^{\frac{p_2q_0}{q_0+\epsilon_0}}\r\|_{L^{\frac{q_0+\epsilon_0}{p_2}}_{\omega_0}(\rn)}.$$
From this and Lemma \ref{l3.7}, we deduce that
$$\lf\||F|^{\frac{p_2q_0}{q_0+\epsilon_0}}\r\|_{L^{q(\cdot)}(\rn)}\ls
\lf\||f|^{\frac{p_2q_0}{q_0+\epsilon_0}}\r\|_{L^{q(\cdot)}(\rn)},
$$
which, together with $q(\cdot)=\frac{q_0+\epsilon_0}{p_2 q_0}p(\cdot)$ and Remark \ref{r1.3}, implies that
$\|F\|_{L^{p(\cdot)}(\rn)}\ls\|f\|_{L^{p(\cdot)}(\rn)}$.
This finishes the proof of (iv) and hence of Theorem \ref{t1.2}.
\end{proof}

To show Theorem \ref{t1.3} via using Theorem \ref{t1.1}, we need the following two-weight
boundedness of the fractional Hardy--Littlewood maximal operator on weighted Lebesgue spaces
(see, for instance, \cite[Theorem 2.11]{p94} and \cite[Theorem 5.37]{cmp11}).

\begin{lemma}\label{l3.8}
Let $\gz\in(0,1)$ and $\cm_\gz$ be the fractional Hardy--Littlewood maximal operator on $\rn$,
$p\in(1,\gz^{-1})$, $q\in(1,\fz)$ satisfy $\frac{1}{q}=\frac{1}{p}-\gz$,
and $\Phi$ be a doubling Young function
satisfying
$$\int_c^\fz\lf[\frac{t^{p'}}{\Phi(t)}\r]^{p-1}\frac{dt}{t}<\fz
$$
for some constant $c\in(0,\fz)$. Assume that the weights $\omega$ and $v$ satisfy
$$\sup_{B\subset\rn}\lf(\fint_B \omega\,dx\r)\lf\|v^{-\frac{1}{p}}\r\|^{q}_{\Phi,\,B}<\fz,
$$
where the supremum is taken over all balls $B$ of $\rn$.
Then $\cm_\gz$ is bounded from $L^p_v(\rn)$ to $L^q_\omega(\rn)$.
\end{lemma}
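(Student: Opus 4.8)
The plan is to pass to a dyadic model and then run a Calder\'on--Zygmund stopping-time argument adapted to the fractional maximal operator. Since $\cm_\gz(f)$ is pointwise comparable to its analogue taken over cubes, and since for every cube $Q\subset\rn$ there is a cube $Q'$ belonging to one of finitely many ``adjacent'' dyadic grids $\mathcal{D}^1,\dots,\mathcal{D}^{N_0}$ (with $N_0$ depending only on $n$) such that $Q\subset Q'$ and $|Q'|\le C_n|Q|$, one obtains, for $0\le f\in L^1_\loc(\rn)$ and $M_\gz^{\mathcal{D}}(f)(x):=\sup_{Q\in\mathcal{D},\,Q\ni x}|Q|^\gz\fint_Q f\,dy$, the pointwise bound $\cm_\gz(f)\ls\sum_{t=1}^{N_0}M_\gz^{\mathcal{D}^t}(f)$. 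As the grids are finite in number, it therefore suffices to prove $\|M_\gz^{\mathcal{D}}(f)\|_{L^q_\omega(\rn)}\ls\|f\|_{L^p_v(\rn)}$ for one fixed dyadic grid $\mathcal{D}$ and any $0\le f\in L^p_v(\rn)$.

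Set $a:=2^{n(1-\gz)+1}$ and, for $k\in\zz$, let $\Omega_k:=\{x\in\rn:\ M_\gz^{\mathcal{D}}(f)(x)>a^k\}$ and let $\{Q_k^j\}_j$ be the maximal cubes of $\mathcal{D}$ with $|Q_k^j|^\gz\fint_{Q_k^j}f\,dy>a^k$, so that $\Omega_k=\bigcup_j Q_k^j$ and, for fixed $k$, the $Q_k^j$ are pairwise disjoint. Using that the dyadic parent $\widehat Q$ of any cube $Q$ satisfies $|Q|^\gz\fint_Q f\le2^{n(1-\gz)}|\widehat Q|^\gz\fint_{\widehat Q}f$, maximality gives $|Q_k^j|^\gz\fint_{Q_k^j}f\le2^{n(1-\gz)}a^k$ for every $j$. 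For the cubes $Q_{k+1}^i\subset Q_k^j$, the defining inequality together with $|Q_{k+1}^i|^{-\gz}\ge|Q_k^j|^{-\gz}$ and the pairwise disjointness of the $Q_{k+1}^i$ yields $a^{k+1}|Q_k^j|^{-\gz}\sum_i|Q_{k+1}^i|\le\int_{Q_k^j}f$, hence $\sum_i|Q_{k+1}^i|\le2^{n(1-\gz)}a^{-1}|Q_k^j|=\frac12|Q_k^j|$. Consequently the sets $E_k^j:=Q_k^j\setminus\Omega_{k+1}$ are pairwise disjoint over all pairs $(k,j)$ and satisfy $|E_k^j|\ge\frac12|Q_k^j|$.

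For the main estimate I would write $f=g\,v^{-1/p}$ with $g:=f\,v^{1/p}$, so that $\|g\|_{L^p(\rn)}=\|f\|_{L^p_v(\rn)}$. Splitting $\rn$ along the level sets $\Omega_k\setminus\Omega_{k+1}=\bigcup_j E_k^j$ and using $M_\gz^{\mathcal{D}}(f)\le a^{k+1}\le a\,|Q_k^j|^\gz\fint_{Q_k^j}f$ on $E_k^j$,
$$\int_{\rn}\lf[M_\gz^{\mathcal{D}}(f)\r]^q\omega\,dx\le a^q\sum_{k,j}\lf(|Q_k^j|^\gz\fint_{Q_k^j}f\,dy\r)^q\omega\lf(Q_k^j\r).$$
By the generalized H\"older inequality for Luxemburg norms applied to the conjugate pair $(\overline{\Phi},\Phi)$ one has $\fint_{Q}f\,dy\le2\|g\|_{\overline{\Phi},\,Q}\|v^{-1/p}\|_{\Phi,\,Q}$; inserting this, invoking the hypothesis in the form $\|v^{-1/p}\|_{\Phi,\,Q_k^j}^q\le C_0|Q_k^j|/\omega(Q_k^j)$, and using that $\gz q+1=q/p$ (which is exactly $\frac1q=\frac1p-\gz$) together with $|Q_k^j|\le2|E_k^j|$, I obtain
$$\int_{\rn}\lf[M_\gz^{\mathcal{D}}(f)\r]^q\omega\,dx\ls\sum_{k,j}\lf(|E_k^j|^{\frac1p}\|g\|_{\overline{\Phi},\,Q_k^j}\r)^q.$$
Since $\|g\|_{\overline{\Phi},\,Q_k^j}\le\inf_{x\in E_k^j}M_{\overline{\Phi}}(g)(x)$, where $M_{\overline{\Phi}}(g):=\sup_{Q\ni\cdot}\|g\|_{\overline{\Phi},\,Q}$ denotes the Orlicz maximal operator, the right-hand side is at most $\sum_{k,j}[\int_{E_k^j}(M_{\overline{\Phi}}(g))^p\,dx]^{q/p}$, which by $q/p\ge1$ and the disjointness of the $E_k^j$ is $\le[\int_{\rn}(M_{\overline{\Phi}}(g))^p\,dx]^{q/p}$. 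Finally, the integral condition on $\Phi$ is equivalent to $\overline{\Phi}\in B_p$, which by P\'erez's theorem (see, e.g., \cite{cmp11,p95}) is equivalent to the boundedness of $M_{\overline{\Phi}}$ on $L^p(\rn)$; hence this quantity is $\ls\|g\|_{L^p(\rn)}^q=\|f\|_{L^p_v(\rn)}^q$. Summing over the grids $\mathcal{D}^t$ then yields $\|\cm_\gz(f)\|_{L^q_\omega(\rn)}\ls\|f\|_{L^p_v(\rn)}$.

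The main obstacle, relative to the classical case $\gz=0$, is the fractional Calder\'on--Zygmund stopping argument: because $|Q|^\gz\fint_Q f$ is not a genuine average, one must carefully keep track of the homogeneity factors $|Q|^\gz$ along the chain of stopping cubes and use the monotonicity $|Q_{k+1}^i|^{-\gz}\ge|Q_k^j|^{-\gz}$ in order to recover the sparseness $|E_k^j|\ge\frac12|Q_k^j|$, which is precisely what forces the threshold $a$ to be chosen large in terms of $n$ and $\gz$. The two auxiliary facts invoked---the generalized H\"older inequality for Luxemburg norms and the $B_p$-characterization of the $L^p$-boundedness of $M_{\overline{\Phi}}$ (equivalently, the stated integral condition on $\Phi$)---are standard results in the theory of Young functions and Orlicz maximal operators.
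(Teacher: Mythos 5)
Your argument is correct, but it is worth noting that the paper does not prove Lemma \ref{l3.8} at all: it is quoted as a known result, with references to P\'erez \cite[Theorem 2.11]{p94} and \cite[Theorem 5.37]{cmp11}. What you have written is, in essence, a reconstruction of that cited proof: the reduction of $\cm_\gz$ to finitely many shifted dyadic grids, the fractional Calder\'on--Zygmund stopping-time cubes $Q_k^j$ with the sparseness estimate $|E_k^j|\ge\frac12|Q_k^j|$ (your bookkeeping of the factors $|Q|^\gz$, the choice $a=2^{n(1-\gz)+1}$, and the identity $\gz q+1=q/p$ are all correct), the generalized H\"older inequality to insert the Orlicz bump, and finally P\'erez's $B_p$ theorem for the Orlicz maximal operator $M_{\overline{\Phi}}$, using the standard equivalence between the stated integral condition on $\Phi$ and $\overline{\Phi}\in B_p$. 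So your route buys self-containedness at the cost of reproving a classical theorem, while the paper simply outsources it. Two routine points you pass over silently and should at least record: the maximal cubes $Q_k^j$ need not exist for a general $0\le f\in L^p_v(\rn)$, so one first takes $f$ bounded with compact support (then $|Q|^{\gz-1}\int_Q f\to0$ as $|Q|\to\fz$) and concludes by monotone convergence; and the bump hypothesis is stated over balls, so one needs the standard comparison of averages and Luxembourg norms between a cube and a containing ball of comparable measure to use it on the dyadic cubes. Neither affects the validity of the proof.
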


Now, we prove Theorem \ref{t1.3} by using Theorem \ref{t1.1} and Lemma \ref{l3.8}.

\begin{proof}[Proof of Theorem \ref{t1.3}]
We first show (i). By the assumption $s>(\frac{p_3}{q})'$, we conclude that there
exists an $a\in(1,\frac{p_3}{p_1})$ such that $\frac{a(s-1)}{s}>\frac{q}{p_1}$.
From Theorem \ref{t1.1}, it follows that, if
$\bz\ge\bz_0$ and $\kappa\in(0,\kappa_0)$, then, for any $\lz\in(0,\fz)$,
\begin{align*}
\omega(E(\bz\lz))\le\bz^{-\frac{a(s-1)}{s}}\omega(E(\lz))+\omega\lf(\lf\{x\in\rn:\
\cm_\gz(|f|^{p_2})(x)>(\kappa\lz)^{\frac{p_2}{p_1}}\r\}\r),
\end{align*}
where $\bz_0$ and $\kappa_0$ are as in Theorem \ref{t1.1},
which further implies that, for any given $T\in(1,\fz)$,
\begin{align*}
\int_0^T\lz^{\frac{q}{p_1}-1}\omega(E(\bz\lz))\,d\lz&\le\bz^{-\frac{a(s-1)}{s}}
\int_0^T\lz^{\frac{q}{p_1}-1}\omega(E(\lz))\,d\lz\\ \nonumber
&\quad+\int_0^T\lz^{\frac{q}{p_1}-1}
\omega\lf(\lf\{x\in\rn:\ \cm_\gz(|f|^{p_2})(x)>(\kappa\lz)^{\frac{p_2}{p_1}}\r\}\r)\,d\lz.
\end{align*}
Similarly to the proof of \eqref{3.10}, by Lemma \ref{l3.8}, we know that
\begin{align}\label{3.20}
\int_0^\fz\lz^{\frac{q}{p_1}-1}\omega\lf(\lf\{x\in\rn:\ \cm_\gz(|f|^{p_2})(x)>(\kappa\lz)^{\frac{p_2}{p_1}}\r\}\r)\,d\lz\ls \kappa^{-\frac{q}{p_1}}\lf(\int_\rn|f|^p v\,dx\r)^{\frac{q}{p}}.
\end{align}
Using \eqref{3.20} and repeating the proof of \eqref{3.12}, we conclude that
\begin{align*}
\int_{\rn}\lf[\cm(|F|^{p_1})\r]^{\frac{q}{p_1}}\omega\,dx\ls\lf(\int_\rn|f|^p v\,dx\r)^{\frac{q}{p}},
\end{align*}
which, combined with the fact that $\cm(|F|^{p_1})\ge|F|^{p_1}$, further implies that $\|F\|_{L^q_\omega(\rn)}\ls\|f\|_{L^p_v(\rn)}$.
This finishes the proof of (i).

Finally, we prove (ii). From the assumption $\tz\in(\frac{nq}{p_3},n]$, it follows that there exists a $\rho_0\in(0,\tz)$
such that $\tz-\rho_0>\frac{nq}{p_3}$. Let $r\in(0,\fz)$ and $x\in\rn$. For any $y\in\rn$, let
$\omega_x(y):=\min\{|x-y|^{-n+\tz-\rho_0}, r^{-n+\tz-\rho_0}\}$ and
$v_x(y):=\min\{|x-y|^{-n+\wz{\tz}-\frac{p\rho_0}{q}}, r^{-n+\wz{\tz}-\frac{p\rho_0}{q}}\}$, where $\wz{\tz}:=n-\frac{p}{q}(n-\tz)$.
By Lemmas \ref{l3.1}(iii) and \ref{l3.6}, we conclude that
$\omega_x\in RH_\gz(\rn)$ with any $\gz\in(1,\frac{n}{n+\rho_0-\tz})$, which, together with $\tz-\rho_0>\frac{nq}{p_3}$ and $(\frac{n}{n+\rho_0-\tz})'=\frac{n}{\tz-\rho_0}$,
implies that $\frac{n}{n+\rho_0-\tz}>(\frac{p_3}{q})'$.
From this, we deduce that there exists an $s\in((\frac{p_3}{q})',\frac{n}{n+\rho_0-\tz})$ such that $\omega_x\in RH_s(\rn)$.
Moreover, it is easy to show that $\omega_x$ and $v_x$ satisfy \eqref{1.9}
with $\Phi(t):=t^{(\frac{p}{p_2})'s_0}$ for some $s_0\in(1,\fz)$ and any $t\in[0,\fz)$.

Thus, applying the conclusion of (i), we find that
\begin{align}\label{3.21}
\|F\|_{L^{q}(B(x,r))}=r^{\frac{n-\tz+\rho_0}{q}}\|F\|_{L^{q}_{\omega_x}(B(x,r))}
\ls r^{\frac{n-\wz{\tz}+\frac{p\rho_0}{q}}{p}}\|f\|_{L^{p}_{v_x}(\rn)}.
\end{align}
Furthermore, similarly to the proof of \eqref{3.19}, we conclude that
\begin{align*}
\|f\|_{L^{p}_{v_x}(\rn)}
\ls r^{-\frac{\rho_0}{q}}\|f\|_{\cm^{\wz{\tz}}_p(\rn)},
\end{align*}
which, combined with \eqref{3.21}, $\wz{\tz}:=n-\frac{p}{q}(n-\tz)$,
and the arbitrariness of $r\in(0,\fz)$ and $x\in\rn$,
further implies that $\|F\|_{\cm^{\tz}_q(\rn)}\ls\|f\|_{\cm^{\wz{\tz}}_p(\rn)}$.
This finishes the proof of (ii) and hence of Theorem \ref{t1.3}.
\end{proof}

\section{Proofs of Theorems \ref{t2.1}, \ref{t2.2}, \ref{t2.3},
and \ref{t2.4}\label{s4}}

In this section, we prove Theorems \ref{t2.1}, \ref{t2.2}, \ref{t2.3},
and \ref{t2.4} by using Theorems \ref{t1.2} and \ref{t1.3}.

To show Theorem \ref{t2.1}, we need the boundedness of the Calder\'on--Zygmund operator
from $L^1(\rn)$ to $L^{1,\,\fz}(\rn)$, and the Kolmogorov inequality.

\begin{lemma}\label{l4.1}
Let $T$ be a Calder\'on--Zygmund operator as in Definition \ref{d2.1}. Then $T$ is bounded from $L^1(\rn)$
to $L^{1,\,\fz}(\rn)$.
\end{lemma}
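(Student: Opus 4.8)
The plan is to prove the classical weak-type $(1,1)$ estimate for $T$ via the Calder\'on--Zygmund decomposition. First I would reduce, by a standard density argument together with Fatou's lemma, to the case $f\in L^\fz_{\rm c}(\rn)$, for which the kernel representation $T(f)(x)=\int_\rn K(x,y)f(y)\,dy$ is available whenever $x$ lies outside the support of the integrand. Fix $\lz\in(0,\fz)$ and apply the Calder\'on--Zygmund decomposition of $f$ at height $\lz$: there is a family of pairwise disjoint dyadic cubes $\{Q_j\}_j$ with centers $y_j$ such that $\lz<\fint_{Q_j}|f|\,dy\le2^n\lz$, $\sum_j|Q_j|\le\lz^{-1}\|f\|_{L^1(\rn)}$, and $|f|\le\lz$ almost everywhere on $\rn\setminus\boz$, where $\boz:=\bigcup_j Q_j$. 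Write $f=g+b$ with $g:=f\mathbf{1}_{\rn\setminus\boz}+\sum_j(\fint_{Q_j}f\,dy)\mathbf{1}_{Q_j}$ and $b:=\sum_j b_j$, where $b_j:=(f-\fint_{Q_j}f\,dy)\mathbf{1}_{Q_j}$; each $b_j$ is supported in $Q_j$ and has vanishing integral.

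For the good part, I would use $\|g\|_{L^\fz(\rn)}\ls\lz$ and $\|g\|_{L^1(\rn)}\le\|f\|_{L^1(\rn)}$, so that $\|g\|_{L^2(\rn)}^2\ls\lz\|f\|_{L^1(\rn)}$; combining the $L^2(\rn)$-boundedness of $T$ (from Definition \ref{d2.1}) with the Chebyshev inequality then gives $|\{x\in\rn:\ |T(g)(x)|>\lz/2\}|\ls\lz^{-1}\|f\|_{L^1(\rn)}$. For the bad part, let $Q_j^\ast$ be the ball concentric with $Q_j$ whose radius is a fixed multiple of the side length of $Q_j$ chosen large enough that $Q_j\subset Q_j^\ast$ and $|y-y_j|<|x-y_j|/2$ for all $y\in Q_j$ and $x\notin Q_j^\ast$, and set $\boz^\ast:=\bigcup_j Q_j^\ast$, so that $|\boz^\ast|\ls\sum_j|Q_j|\ls\lz^{-1}\|f\|_{L^1(\rn)}$. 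On $\rn\setminus Q_j^\ast$ the mean-zero property of $b_j$ lets one write $T(b_j)(x)=\int_{Q_j}[K(x,y)-K(x,y_j)]b_j(y)\,dy$, and the H\"older regularity of $K$ in its second variable (Definition \ref{d2.1}) yields the H\"ormander-type bound $\int_{\rn\setminus Q_j^\ast}|K(x,y)-K(x,y_j)|\,dx\ls1$, uniformly in $j$ and $y\in Q_j$, by a routine dyadic-annuli computation. Hence $\int_{\rn\setminus\boz^\ast}|T(b_j)(x)|\,dx\ls\|b_j\|_{L^1(\rn)}\ls\int_{Q_j}|f|\,dy$, and summing over $j$ gives $\int_{\rn\setminus\boz^\ast}|T(b)(x)|\,dx\ls\|f\|_{L^1(\rn)}$; Chebyshev's inequality then bounds $|\{x\in\rn\setminus\boz^\ast:\ |T(b)(x)|>\lz/2\}|$ by $\lz^{-1}\|f\|_{L^1(\rn)}$.

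Putting the three pieces together,
\[
|\{x\in\rn:\ |T(f)(x)|>\lz\}|\le|\{|T(g)|>\lz/2\}|+|\boz^\ast|+|\{x\in\rn\setminus\boz^\ast:\ |T(b)(x)|>\lz/2\}|\ls\lz^{-1}\|f\|_{L^1(\rn)},
\]
and taking the supremum over $\lz\in(0,\fz)$ yields $\|T(f)\|_{L^{1,\,\fz}(\rn)}\ls\|f\|_{L^1(\rn)}$; the general $f\in L^1(\rn)$ then follows by density. The main obstacle is the careful justification of the kernel manipulations for the bad part: one must legitimize the interchange of $T$ with the sum $\sum_j b_j$ (which a priori converges only in $L^2(\rn)$, so one works with finitely many terms and passes to the limit) and verify that the pointwise identity $T(b_j)(x)=\int(K(x,y)-K(x,y_j))b_j(y)\,dy$ holds for $x\notin Q_j^\ast$ — this is precisely where the reduction to $f\in L^\fz_{\rm c}(\rn)$ and the exact form of the kernel representation in Definition \ref{d2.1} are used. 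Everything else is a standard measure-theoretic and dyadic-scale estimate.
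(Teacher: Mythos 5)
Your argument is the classical Calder\'on--Zygmund decomposition proof of the weak $(1,1)$ bound, and it is correct, including the points you flag (kernel representation off the support, H\"ormander-type estimate from the H\"older regularity in the second variable, and passing from finitely many bad pieces to the full sum). The paper does not prove Lemma \ref{l4.1} at all but simply cites it as well known from \cite{g14,St93}, and your proof is essentially the standard argument given in those references, so there is nothing further to compare.
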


Lemma \ref{l4.1} is well known (see, for instance, \cite{g14,St93}).

\begin{lemma}\label{l4.2}
Let $S$ be a bounded operator from $L^1(\rn)$ to $L^{1,\,\fz}(\rn)$, $\nu\in(0,1)$ a constant, and
the measurable subset $E$ of $\rn$ satisfy $|E|<\fz$.
Then there exists a positive constant $C$, depending only on $\nu$, such that, for any $f\in L^1(\rn)$,
$$\int_E|S(f)|^{\nu}\,dx\le C|E|^{1-\nu}\|f\|^\nu_{L^1(\rn)}.
$$
\end{lemma}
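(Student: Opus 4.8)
The plan is to prove Lemma \ref{l4.2} by the classical Kolmogorov argument, that is, by passing to the distribution function and combining the trivial bound by $|E|$ with the weak-type $(1,1)$ estimate for $S$. We may clearly assume $\|f\|_{L^1(\rn)}\in(0,\fz)$, the inequality being trivial otherwise.

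First I would invoke the layer-cake formula: for any measurable function $g$ on $\rn$,
$$\int_E|g|^\nu\,dx=\nu\int_0^\fz\lz^{\nu-1}\lf|\lf\{x\in E:\ |g(x)|>\lz\r\}\r|\,d\lz.$$
Applying this with $g:=S(f)$ and writing $c_0$ for the norm of $S$ from $L^1(\rn)$ to $L^{1,\,\fz}(\rn)$, I would then estimate, for every $\lz\in(0,\fz)$,
$$\lf|\lf\{x\in E:\ |S(f)(x)|>\lz\r\}\r|\le\min\lf\{|E|,\ \frac{c_0\|f\|_{L^1(\rn)}}{\lz}\r\},$$
the first bound being immediate from $\{x\in E:\ |S(f)(x)|>\lz\}\subset E$ and the second being exactly the weak boundedness of $S$.

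Next I would split the $\lz$-integral at the threshold $\lz_0:=c_0\|f\|_{L^1(\rn)}/|E|\in(0,\fz)$, using the bound $|E|$ on $(0,\lz_0)$ and the bound $c_0\|f\|_{L^1(\rn)}/\lz$ on $[\lz_0,\fz)$. The first piece contributes $\nu|E|\int_0^{\lz_0}\lz^{\nu-1}\,d\lz=|E|\lz_0^\nu$, which is finite since $\nu>0$; the second contributes $\nu c_0\|f\|_{L^1(\rn)}\int_{\lz_0}^\fz\lz^{\nu-2}\,d\lz=\frac{\nu}{1-\nu}c_0\|f\|_{L^1(\rn)}\lz_0^{\nu-1}$, which is finite precisely because $\nu<1$. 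Substituting $\lz_0=c_0\|f\|_{L^1(\rn)}/|E|$, one checks that $|E|\lz_0^\nu=c_0\|f\|_{L^1(\rn)}\lz_0^{\nu-1}=c_0^\nu|E|^{1-\nu}\|f\|_{L^1(\rn)}^\nu$, so that adding the two pieces gives
$$\int_E|S(f)|^\nu\,dx\le\lf(1+\frac{\nu}{1-\nu}\r)c_0^\nu|E|^{1-\nu}\|f\|_{L^1(\rn)}^\nu=\frac{c_0^\nu}{1-\nu}|E|^{1-\nu}\|f\|_{L^1(\rn)}^\nu,$$
which is the asserted inequality with $C:=c_0^\nu/(1-\nu)$; since $S$ (hence $c_0$) is fixed, $C$ depends only on $\nu$.

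I do not expect any genuine obstacle here. The only point deserving attention is the convergence of the two truncated integrals, which is exactly where the hypotheses are used: $\nu<1$ makes $\int_{\lz_0}^\fz\lz^{\nu-2}\,d\lz$ finite, while $\nu>0$ together with $|E|<\fz$ makes $\lz_0$ positive and the truncation at $\lz_0$ meaningful, so that $\int_0^{\lz_0}\lz^{\nu-1}\,d\lz$ is finite as well.
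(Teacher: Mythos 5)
Your argument is correct and is exactly the classical Kolmogorov computation (layer-cake formula, the bound $\min\{|E|,c_0\|f\|_{L^1(\rn)}/\lz\}$ on the distribution function, and the split at $\lz_0=c_0\|f\|_{L^1(\rn)}/|E|$), which is the proof the paper refers to via the cited references and which it reproduces almost verbatim for the analogous Lemma \ref{l4.4} for $I_\az$. The only cosmetic point is that your constant $C=c_0^\nu/(1-\nu)$ depends on the weak $(1,1)$ norm of $S$ as well as on $\nu$, which you correctly note is harmless since $S$ is fixed in the statement.
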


The conclusion of Lemma \ref{l4.2}, called the Kolmogorov inequality,
is well known (see, for instance, \cite[Lemma 5.16]{d01} and \cite[p.\,100]{g14}).

We now prove Theorem \ref{t2.1} via using Theorem \ref{t1.2}, and Lemmas \ref{l4.1} and \ref{l4.2}.
In what follows, let $L^\fz_{\rm c}(\rn)$ denote the set of all \emph{bounded measurable functions
on $\rn$ with compact support}.

\begin{proof}[Proof of Theorem \ref{t2.1}]
Let $f\in L^\fz_{\rm c}(\rn)$ and $F:=T(f)$. For any ball $B:=B(x_B,r_B)$ of $\rn$ with $x_B\in \rn$ and $r_B\in(0,\fz)$,
let $F_B:=T(f\mathbf{1}_{8B})$ and $R_B:=T(f-f\mathbf{1}_{8B})$. Then $|F|\le|F_B|+|R_B|$. Let $q\in(1,\fz)$ and $\nu\in(0,1)$.
By the assumption $\omega\in A_\fz(\rn)$ and Lemma \ref{l3.1}(i), we find that there exists an $s\in(1,\fz]$
such that $\omega\in RH_s(\rn)$.

From Lemmas \ref{l4.1} and \ref{l4.2}, we deduce that, for any $x_1\in B$,
\begin{align}\label{4.1}
\lf(\fint_B|F_B|^{\nu}\,dx\r)^{\frac1{\nu}}\ls\fint_{8B}|f|\,dx
\ls\cm(f)(x_1).
\end{align}
Moreover,  by $\supp(f-f\mathbf{1}_{8B})\subset\rn\backslash(8B)$, we conclude that, for any $x,\,y,\,x_1\in B$,
\begin{align*}
&\lf|T\lf(f\mathbf{1}_{\rn\backslash(8B)}\r)(x)-T\lf(f\mathbf{1}_{\rn\backslash(8B)}\r)(y)\r|\\
&\quad\le\int_{\rn\backslash(8B)}
|K(x,z)-K(y,z)||f(z)|\,dz\\
&\quad\ls|x-y|^{\dz}\int_{\rn\backslash(8B)}\frac{|f(z)|}{|x-z|^{n+\dz}}\,dz\\
&\quad\ls r_B^{\dz}\sum_{j=2}^\fz\frac{1}{(2^j r_B)^{n+\dz}}\int_{(2^{j+1}B)\backslash(2^jB)}|f(z)|\,dz
\ls\cm(f)(x_1),
\end{align*}
which further implies that, for any $x_1\in B$,
\begin{align}\label{4.2}
\lf\|T(f\mathbf{1}_{\rn\backslash(8B)})\r\|_{L^\fz(B)}&\ls\inf_{y\in B}\lf\{\lf|T(f\mathbf{1}_{\rn\backslash(8B)})(y)\r|\r\}
+\cm(f)(x_1)\\
&\ls\lf[\fint_B|T(f\mathbf{1}_{\rn\backslash(8B)})|^\nu\,dy\r]^{\frac{1}{\nu}}
+\cm(f)(x_1)\nonumber\\
&\sim\lf[\fint_B|R_B|^\nu\,dy\r]^{\frac{1}{\nu}}
+\cm(f)(x_1).\nonumber
\end{align}
From \eqref{4.2} and \eqref{4.1}, it follows that, for any $x_1,\,x_2\in B$,
\begin{align}\label{4.3}
\lf\|R_B\r\|_{L^\fz(B)}
&\ls\lf(\fint_B|R_B|^\nu\,dy\r)^{\frac{1}{\nu}}+\cm(f)(x_1)\\
&\ls\lf(\fint_B|F|^\nu\,dy\r)^{\frac{1}{\nu}}+\lf(\fint_B|F_B|^\nu\,dy\r)^{\frac{1}{\nu}}+\cm(f)(x_1)\nonumber\\
&\ls\lf(\fint_B|F|^\nu\,dy\r)^{\frac{1}{\nu}}+\cm(f)(x_1)
\ls\lf[\cm(|F|^\nu)(x_2)\r]^{\frac{1}{\nu}}+\cm(f)(x_1).\nonumber
\end{align}
By \eqref{4.1} and \eqref{4.3}, we find that \eqref{1.4} and \eqref{1.5} hold true
for $p_1:=\nu$, $p_2:=1$, $p_3:=\fz$, and $\epsilon:=0$. Thus, applying Theorem \ref{t1.2}, we then complete
the proof of Theorem \ref{t2.1}.
\end{proof}

To show Theorem \ref{t2.2}, we need the following boundedness of the Littlewood--Paley $g$-function from $L^1(\rn)$
to $L^{1,\,\fz}(\rn)$ (see, for instance, \cite{s58,xpy15}).

\begin{lemma}\label{l4.3}
For any $f\in L^1(\rn)$, $g(f)\in L^{1,\,\fz}(\rn)$, and there exists a positive constant $C$, independent of $f$,
such that $\|g(f)\|_{L^{1,\,\fz}(\rn)}\le C\|f\|_{L^1(\rn)}$.
\end{lemma}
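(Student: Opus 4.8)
The plan is to recognize the Littlewood--Paley $g$-function as the norm of a Hilbert-space-valued convolution singular integral and then run the standard Calder\'on--Zygmund weak-type argument. Let $\mathscr{H}:=L^2((0,\fz),\frac{dt}{t})$ and, for $x\in\rn\setminus\{0\}$, set $K(x):=\{\phi_t(x)\}_{t\in(0,\fz)}$, viewed as an element of $\mathscr{H}$; since $\phi\in\cs(\rn)$, for any $f\in L^1(\rn)$ the $\mathscr{H}$-valued convolution $(K\ast f)(x):=\int_{\rn}K(x-y)f(y)\,dy$ is well defined and $g(f)(x)=\|(K\ast f)(x)\|_{\mathscr{H}}$, so that $g$ is sublinear.

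First I would record the $L^2$-boundedness of $g$: by the Plancherel theorem and the Fubini theorem, $\|g(f)\|_{L^2(\rn)}^2=\int_{\rn}|\widehat f(\xi)|^2\lf[\int_0^\fz|\widehat\phi(t\xi)|^2\frac{dt}{t}\r]\,d\xi$, and the condition $\int_{\rn}\phi\,dx=0$ (hence $\widehat\phi(0)=0$) together with the Schwartz decay of $\widehat\phi$ makes the inner integral bounded above by a constant independent of $\xi$; thus $\|g(f)\|_{L^2(\rn)}\ls\|f\|_{L^2(\rn)}$. Next I would establish the kernel estimates: starting from $|\phi_t(x)|+t|\nabla\phi_t(x)|\ls_N t^{-n}(1+|x|/t)^{-N}$ for every $N\in\nn$ and splitting the $t$-integral at $t=|x|$, one obtains, for any $x\in\rn\setminus\{0\}$, $\|K(x)\|_{\mathscr{H}}\ls|x|^{-n}$ and $\|\nabla K(x)\|_{\mathscr{H}}\ls|x|^{-n-1}$; the latter yields, via the mean value theorem, the H\"ormander-type regularity $\sup_{y\in\rn\setminus\{0\}}\int_{|x|>2|y|}\|K(x-y)-K(x)\|_{\mathscr{H}}\,dx<\fz$.

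With these two ingredients in place, I would fix $\lz\in(0,\fz)$ and $f\in L^1(\rn)$ and apply the Calder\'on--Zygmund decomposition at height $\lz$, writing $f=h+b$ with $\|h\|_{L^\fz(\rn)}\ls\lz$, $\|h\|_{L^1(\rn)}\ls\|f\|_{L^1(\rn)}$, and $b=\sum_j b_j$, where each $b_j$ is supported in a cube $Q_j$ with center $c_j$, $\int_{Q_j}b_j\,dx=0$, $\|b_j\|_{L^1(\rn)}\ls\lz|Q_j|$, the $\{Q_j\}_j$ have bounded overlap, and $\sum_j|Q_j|\ls\lz^{-1}\|f\|_{L^1(\rn)}$. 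For the good part, Chebyshev's inequality and the $L^2$-bound give $|\{x\in\rn:\ g(h)(x)>\lz/2\}|\ls\lz^{-2}\|h\|_{L^2(\rn)}^2\ls\lz^{-2}\lz\|h\|_{L^1(\rn)}\ls\lz^{-1}\|f\|_{L^1(\rn)}$. For the bad part, set $\Omega:=\bigcup_j 2\sqrt n\,Q_j$, so $|\Omega|\ls\lz^{-1}\|f\|_{L^1(\rn)}$; using $\int_{Q_j}b_j=0$ to replace $K(x-y)$ by $K(x-y)-K(x-c_j)$ and then invoking the H\"ormander condition, $\int_{\rn\setminus\Omega}g(b)\,dx\le\sum_j\int_{\rn\setminus 2\sqrt n Q_j}\|\int_{Q_j}[K(x-y)-K(x-c_j)]b_j(y)\,dy\|_{\mathscr{H}}\,dx\ls\sum_j\|b_j\|_{L^1(\rn)}\ls\lz\sum_j|Q_j|\ls\|f\|_{L^1(\rn)}$, whence $|\{x\in\rn\setminus\Omega:\ g(b)(x)>\lz/2\}|\ls\lz^{-1}\|f\|_{L^1(\rn)}$. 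Combining and using sublinearity, $|\{x\in\rn:\ g(f)(x)>\lz\}|\le|\Omega|+|\{g(h)>\lz/2\}|+|\{x\notin\Omega:\ g(b)>\lz/2\}|\ls\lz^{-1}\|f\|_{L^1(\rn)}$; multiplying by $\lz$ and taking the supremum over $\lz\in(0,\fz)$ gives $\|g(f)\|_{L^{1,\fz}(\rn)}\ls\|f\|_{L^1(\rn)}$.

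The hard part will be the second step, namely verifying the vector-valued size and H\"ormander estimates for $K$: this requires a careful split of the $t$-integral according to whether $t\lesssim|x|$ or $t\gtrsim|x|$ and a choice of the decay exponent $N$ large in terms of $n$. Everything else is the routine Calder\'on--Zygmund machinery; alternatively, once the kernel estimates are in hand, the conclusion follows immediately from the general theory of $\mathscr{H}$-valued Calder\'on--Zygmund operators, of which $g$ is an instance.
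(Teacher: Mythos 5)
Your proposal is correct, and it is essentially the classical proof of the weak $(1,1)$ bound for $g$; note, however, that the paper itself gives no proof of Lemma \ref{l4.3} at all, but simply cites Stein's 1958 paper and Xue--Peng--Yabuta, so there is no in-paper argument to compare with. Your route --- realizing $g(f)(x)=\|(K\ast f)(x)\|_{\mathscr H}$ with $K(x)=\{\phi_t(x)\}_{t>0}$ valued in $\mathscr H=L^2((0,\fz),\frac{dt}{t})$, proving the $L^2$ bound by Plancherel, the size and H\"ormander estimates for $K$ by splitting the $t$-integral at $t=|x|$, and then running the Calder\'on--Zygmund decomposition --- is exactly the standard vector-valued singular integral argument underlying those references, and all the key steps you indicate go through: the uniform bound on $\int_0^\fz|\hat\phi(t\xi)|^2\,\frac{dt}{t}$ follows from $|\hat\phi(\eta)|\ls\min\{|\eta|,|\eta|^{-1}\}$ (using $\hat\phi(0)=0$, smoothness, and Schwartz decay) together with the scaling $t\mapsto t|\xi|$, and the gradient bound $\|\nabla K(x)\|_{\mathscr H}\ls|x|^{-n-1}$ gives the H\"ormander condition by the mean value theorem. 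Two small points of care: the $\mathscr H$-valued convolution $K\ast f$ need not converge as an absolutely convergent Bochner integral for a general $f\in L^1(\rn)$ (since $\|K(x-y)\|_{\mathscr H}\sim|x-y|^{-n}$ is not locally integrable), so one should take $g(f)$ in its pointwise definition and use only the pointwise sublinearity $|\phi_t\ast f|\le|\phi_t\ast h|+\sum_j|\phi_t\ast b_j|$, which is all your argument actually requires; and on the bad part the interchange of the $x$-integral with the $\mathscr H$-norm is justified by Minkowski's integral inequality applied to each $b_j$, which is bounded with compact support. With these readings, your proof is complete; what it buys over the paper's treatment is a self-contained argument, at the cost of redoing machinery the authors deliberately outsourced to the literature.
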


Next, we prove Theorem \ref{t2.2} via using Theorem \ref{t1.2} and Lemmas \ref{l4.2} and \ref{l4.3}.

\begin{proof}[Proof of Theorem \ref{t2.2}]
Let $f\in L^\fz_{\rm c}(\rn)$ and $F:=g(f)$. For any ball $B:=B(x_B,r_B)$ of $\rn$ with $x_B\in \rn$ and $r_B\in(0,\fz)$, let $F_B:=g(f\mathbf{1}_{8B})$
and $R_B:=g(f-f\mathbf{1}_{8B})$. Then $|F|\le|F_B|+|R_B|$. Let $q\in(1,\fz)$ and $\nu\in(0,1)$.
By the assumption $\omega\in A_\fz(\rn)$ and Lemma \ref{l3.1}(i), we conclude that there exists an $s\in(1,\fz]$
such that $\omega\in RH_s(\rn)$.

From Lemmas \ref{l4.3} and \ref{l4.2}, it follows that, for any $x_1\in B$,
\begin{align}\label{4.4}
\lf(\fint_B|F_B|^{\nu}\,dx\r)^{\frac1{\nu}}\ls\fint_{8B}|f|\,dx
\ls\cm(f)(x_1).
\end{align}
Moreover, by the definition of $g(f)$, we know that, for any $x,\,y\in B$,
\begin{align}\label{4.5}
|R_B(x)|\le\lf[\int_0^\fz\lf|\phi_t\ast(f\mathbf{1}_{\rn\backslash(8B)})(x)-
\phi_t\ast(f\mathbf{1}_{\rn\backslash(8B)})(y)\r|^2\frac{dt}{t}\r]^{1/2}+g\lf(f\mathbf{1}_{\rn\backslash(8B)}\r)(y).
\end{align}
From the assumption $\phi\in\cs(\rn)$ and the mean value theorem, we deduce that, for any given $\varepsilon\in(1,\fz)$,
and for any $t\in(0,\fz)$ and $x,\,y\in B$,
\begin{align*}
&\lf|\phi_t\ast(f\mathbf{1}_{\rn\backslash(8B)})(x)-
\phi_t\ast(f\mathbf{1}_{\rn\backslash(8B)})(y)\r|\\
&\quad=\lf|\frac{1}{t^n}\int_{\rn\backslash(8B)}
\lf[\phi\lf(\frac{x-z}{t}\r)-\phi\lf(\frac{y-z}{t}\r)\r]f(z)\,dz\r|\\ \nonumber
&\quad\ls\frac{1}{t^n}\int_{\rn\backslash(8B)}\lf|\nabla\phi\lf(\frac{\tz(x-y)-z}{t}\r)\r|\lf|\frac{x-y}{t}\r||f(z)|\,dz\\ \nonumber
&\quad\ls t^{\varepsilon-1}r_B\int_{\rn\backslash(8B)}\frac{|f(z)|}{(t+|\tz(x-y)-z|)^{n+\varepsilon}}\,dz
\ls t^{\varepsilon-1}r_B\sum_{j=2}^\fz\int_{2^jB}\frac{|f(z)|}{(t+2^jr_B)^{n+\varepsilon}}\,dz,
\end{align*}
where $\tz\in(0,1)$ is a constant, which, together with the Minkowski inequality,
further implies that, for any $x,\,y,\,x_1\in B$,
\begin{align}\label{4.6}
&\lf[\int_0^\fz\lf|\phi_t\ast(f\mathbf{1}_{\rn\backslash(8B)})(x)-
\phi_t\ast(f\mathbf{1}_{\rn\backslash(8B)})(y)\r|^2\frac{dt}{t}\r]^{\frac12}\\
&\quad\ls\lf\{\int_0^\fz\lf[t^{\varepsilon-1}r_B\sum_{j=2}^\fz\int_{2^jB}\frac{|f(z)|}
{(t+2^jr_B)^{n+\varepsilon}}\,dz\r]^2\frac{dt}{t}\r\}^{\frac12}
\nonumber\\
&\quad\ls\sum_{j=2}^\fz\lf\{\int_0^\fz\lf[t^{\varepsilon-1}r_B\int_{2^jB}\frac{|f(z)|}
{(t+2^jr_B)^{n+\varepsilon}}\,dz\r]^2\frac{dt}{t}\r\}^{\frac12}
\nonumber\\
&\quad\ls\sum_{j=2}^\fz\lf\{\int_0^{2^jr_B}\lf[t^{\varepsilon-1}r_B\int_{2^jB}\frac{|f(z)|}
{(t+2^jr_B)^{n+\varepsilon}}\,dz\r]^2\frac{dt}{t}\r\}^{\frac12}
+\sum_{j=2}^\fz\lf\{\int_{2^jr_B}^\fz\cdots\frac{dt}{t}\r\}^{\frac12}\nonumber\\
&\quad\ls\sum_{j=2}^\fz\lf\{\int_0^{2^jr_B}t^{2(\varepsilon-1)}\frac{dt}{t}\r\}^{\frac12}
2^{-j\varepsilon}r_B^{1-\varepsilon}\cm(f)(x_1)
+\sum_{j=2}^\fz\lf\{\int_{2^jr_B}^\fz t^{-2}\frac{dt}{t}\r\}^{\frac12}r_B\cm(f)(x_1)\nonumber\\
&\quad\ls\sum_{j=2}^\fz2^{-j}\cm(f)(x_1)\ls\cm(f)(x_1).\nonumber
\end{align}
By \eqref{4.5} and \eqref{4.6}, we conclude that, for any $x,\,x_1\in B$,
$$
|R_B(x)|\ls\inf_{y\in B}\lf\{g\lf(f\mathbf{1}_{\rn\backslash(8B)}\r)(y)\r\}+\cm(f)(x_1)
\ls\lf(\fint_B|R_B|^\nu\,dy\r)^{\frac{1}{\nu}}+\cm(f)(x_1),
$$
which, combined with \eqref{4.4}, implies that, for any $x_1,\,x_2\in B$,
\begin{align}\label{4.7}
\lf\|R_B\r\|_{L^\fz(B)}
&\ls\lf(\fint_B|R_B|^\nu\,dy\r)^{\frac{1}{\nu}}+\cm(f)(x_1)\\
&\ls\lf(\fint_B|F|^\nu\,dy\r)^{\frac{1}{\nu}}+\lf(\fint_B|F_B|^\nu\,dy\r)^{\frac{1}{\nu}}+\cm(f)(x_1)\nonumber\\
&\ls\lf(\fint_B|F|^\nu\,dy\r)^{\frac{1}{\nu}}+\cm(f)(x_1)
\ls\lf[\cm(|F|^\nu)(x_2)\r]^{\frac{1}{\nu}}+\cm(f)(x_1).\nonumber
\end{align}
Then, from \eqref{4.4} and \eqref{4.7}, it follows that \eqref{1.4} and \eqref{1.5} hold true
for $p_1:=\nu$, $p_2:=1$, $p_3:=\fz$, and $\epsilon:=0$. Thus, applying Theorem \ref{t1.2}, we then complete
the proof of Theorem \ref{t2.2}.
\end{proof}

Denote by $C^{\fz}_{\mathrm{c}}(\rn)$ the set of all \emph{infinitely differentiable functions on
$\rn$ with compact support}. Take $\Phi\in C^\fz_{\rm c}(\rn)$ such that $\mathbf{1}_{B(\mathbf{0},\,1)}\le\Phi\le\mathbf{1}_{B(\mathbf{0},\,2)}$,
where $\mathbf{0}$ denotes the \emph{origin} in $\rn$.
Let $\az\in[1,\fz)$. For any $f\in\cs'(\rn)$ and $x\in\rn$, let
$$
\wz{S}_{\az}(f)(x):=\lf[\int_0^\fz\int_\rn\Phi\lf(\frac{x-y}{\az t}\r)|\phi_t\ast f(y)|^2\,\frac{dy dt}{t^{n+1}}\r]^{\frac12}.
$$
Then it is easy to see that, for any $f\in\cs'(\rn)$ and $x\in\rn$,
\begin{align}\label{4.8}
S_\az(f)(x)\le\wz{S}_{\az}(f)(x)\le S_{2\az}(f)(x).
\end{align}

Now, we show Theorem \ref{t2.3} by using Theorem \ref{t1.2}.

\begin{proof}[Proof of Theorem \ref{t2.3}]
Let $f\in L^\fz_{\rm c}(\rn)$ and $F:=\wz{S}_{\az}(f)$. For any ball $B:=B(x_B,r_B)$ of $\rn$ with $x_B\in \rn$ and $r_B\in(0,\fz)$,
let $F_B:=\wz{S}_{\az}(f\mathbf{1}_{8B})$
and $R_B:=\wz{S}_{\az}(f-f\mathbf{1}_{8B})$. Then $|F|\le|F_B|+|R_B|$. Let $q\in(1,\fz)$ and $\nu\in(0,1)$.
From the assumption $\omega\in A_\fz(\rn)$ and Lemma \ref{l3.1}(i), we deduce that there exists an $s\in(1,\fz]$
such that $\omega\in RH_s(\rn)$.

It is known that, for any $h\in L^1(\rn)$, $\|\wz{S}_{\az}(h)\|_{L^{1,\,\fz}(\rn)}\ls\az^n\|h\|_{L^1(\rn)}$
(see, for instance, \cite[(3.1)]{l14}).
By this and Lemma \ref{l4.2}, we conclude that, for any $x_1\in B$,
\begin{align}\label{4.9}
\lf(\fint_B|F_B|^{\nu}\,dx\r)^{\frac1{\nu}}&\ls\az^n\fint_{8B}|f|\,dx\ls\cm(\az^nf)(x_1).
\end{align}
To estimate $R_B$, we borrow some ideas from the proof of \cite[Lemma 3.1]{l14}.
From the definition of $\wz{S}_{\az}(f)$, it follows that, for any $x,\,y\in B$,
\begin{align}\label{4.10}
|R_B(x)|&\le\lf[\int_0^\fz\int_\rn\lf|\Phi\lf(\frac{x-z}{\az t}\r)-\Phi\lf(\frac{y-z}{\az t}\r)\r|
\lf|\phi_t\ast (f\mathbf{1}_{\rn\backslash(8B)})(z)\r|^2\,\frac{dz dt}{t^{n+1}}\r]^{\frac12}\\
&\quad+\wz{S}_{\az}\lf(f\mathbf{1}_{\rn\backslash(8B)}\r)(y).\nonumber
\end{align}
For any ball $\wz{B}:=B(x_{\wz{B}},r_{\wz{B}})$ of $\rn$ with $x_{\wz{B}}\in \rn$ and $r_{\wz{B}}\in(0,\fz)$, let
$$T(\wz{B}):=\lf\{(z,t)\in\rr^{n+1}_+:=\rn\times(0,\fz):\ z\in \wz{B},\
t\in(0,r_{\wz{B}})\r\}.$$
Then, by the assumption $\phi\in\cs(\rn)$, we find that, for any given $\epsilon\in(0,\fz)$,
and for any $(z,t)\in T(2B)$ and any $x_1\in B$,
\begin{align}\label{4.11}
\lf|\phi_t\ast (f\mathbf{1}_{\rn\backslash(8B)})(z)\r|&\ls \int_{\rn\backslash(8B)}
\frac{t^\epsilon|f(\xi)|}{(t+|z-\xi|)^{n+\epsilon}}\,d\xi\\
&\ls\lf(\frac{t}{r_B}\r)^\epsilon\sum_{k=0}^\fz\frac{1}{2^{k\epsilon}}\frac{1}{|2^kB|}\int_{2^kB}|f(\xi)|\,d\xi
\ls\lf(\frac{t}{r_B}\r)^\epsilon\cm(f)(x_1),\nonumber
\end{align}
which further implies that, for any $x,\,y,\,x_1\in B$,
\begin{align}\label{4.12}
&\lf[\int_0^{2r_B}\int_{2B}\lf|\Phi\lf(\frac{x-z}{\az t}\r)-\Phi\lf(\frac{y-z}{\az t}\r)\r|
\lf|\phi_t\ast (f\mathbf{1}_{\rn\backslash(8B)})(z)\r|^2\,\frac{dz dt}{t^{n+1}}\r]^{\frac12}\\
&\quad\ls\lf[\int_0^{2r_B}\int_{2B}\lf|\Phi\lf(\frac{x-z}{\az t}\r)-\Phi\lf(\frac{y-z}{\az t}\r)\r|
\lf(\frac{t}{r_B}\r)^{2\epsilon}\,\frac{dz dt}{t^{n+1}}\r]^{\frac12}\cm(f)(x_1)\nonumber\\
&\quad\ls\lf[\int_0^{2r_B}(\az t)^n
\lf(\frac{t}{r_B}\r)^{2\epsilon}\,\frac{dz dt}{t^{n+1}}\r]^{\frac12}\cm(f)(x_1)
\ls\az^{\frac{n}2}\cm(f)(x_1).\nonumber
\end{align}
Moreover, it is easy to see that, for any $x,\,y\in B$,
\begin{align}\label{4.13}
&\lf[\int_{\rr^{n+1}_+\backslash T(2B)}\lf|\Phi\lf(\frac{x-z}{\az t}\r)-\Phi\lf(\frac{y-z}{\az t}\r)\r|\lf|\phi_t\ast
(f\mathbf{1}_{\rn\backslash(8B)})(z)\r|^2\,\frac{dz dt}{t^{n+1}}\r]^{\frac12}\\
&\quad\le\sum_{k=1}^\fz\lf[\int_{T(2^{k+1}B)\backslash T(2^kB)}\lf|\Phi\lf(\frac{x-z}{\az t}\r)
-\Phi\lf(\frac{y-z}{\az t}\r)\r|\lf|\phi_t\ast (f\mathbf{1}_{\rn\backslash(8B)})(z)\r|^2
\,\frac{dz dt}{t^{n+1}}\r]^{\frac12}.\nonumber
\end{align}

Fix $k\in\nn$ and assume that $(z,t)\in T(2^{k+1}B)\backslash T(2^kB)$. If $z\in 2^kB$, then $t\ge 2^k r_B$. Moreover,
if $z\in (2^{k+1}B)\backslash(2^kB)$, then, for any $x\in B$, $|x-z|\ge(2^k-1)r_B$. If $t<\frac{2^k-1}{2\az}r_B$, then
$\frac{|x-z|}{\az t}>2$ and $\frac{|y-z|}{\az t}>2$, and hence
$$\lf|\Phi\lf(\frac{x-z}{\az t}\r)-\Phi\lf(\frac{y-z}{\az t}\r)\r|=0.
$$
Let $t\ge\frac{2^k-1}{2\az}r_B$. Then $t\ge\frac{2^{k-2}}{\az}r_B$, which, together with the estimate
$$\lf|\Phi\lf(\frac{x-z}{\az t}\r)-\Phi\lf(\frac{y-z}{\az t}\r)\r|\le\frac{2r_B}{\az t}\|\nabla\Phi\|_{L^\fz(\rn)},
$$
further implies that, for any $x,\,y\in B$,
\begin{align}\label{4.14}
\lf|\Phi\lf(\frac{x-\cdot}{\az t}\r)-\Phi\lf(\frac{y-\cdot}{\az t}\r)\r|\mathbf{1}_{T(2^{k+1}B)\backslash T(2^kB)}
\ls\frac{r_B}{\az t}\mathbf{1}_{\{z\in 2^{k+1}B,\,t\in[2^{k-2}r_B/\az,2^{k+1}r_B]\}}.
\end{align}
From \eqref{4.14}, we deduce that, for any $x,\,y\in B$,
\begin{align}\label{4.15}
&\int_{T(2^{k+1}B)\backslash T(2^kB)}\lf|\Phi\lf(\frac{x-z}{\az t}\r)-\Phi\lf(\frac{y-z}{\az t}\r)\r|
\lf|\phi_t\ast (f\mathbf{1}_{\rn\backslash(8B)})(z)\r|^2\,\frac{dz dt}{t^{n+1}}\\
&\quad\ls\frac{r_B}{\az}\int_{2^{k-2}r_B/\az}^{2^{k+1}r_B}\int_{2^{k+1}B}\lf|\phi_t\ast
(f\mathbf{1}_{\rn\backslash(8B)})(z)\r|^2\,\frac{dz dt}{t^{n+2}}\nonumber\\
&\quad\ls\frac{r_B}{\az}\int_{2^{k-2}r_B/\az}^{2^{k+1}r_B}\int_{2^{k+1}B}\lf|\phi_t\ast
(f\mathbf{1}_{(2^{k+2}B)\backslash(8B)})(z)\r|^2\,\frac{dz dt}{t^{n+2}}\nonumber\\
&\quad\quad+\frac{r_B}{\az}\int_{2^{k-2}r_B/\az}^{2^{k+1}r_B}\int_{2^{k+1}B}\lf|\phi_t\ast
(f\mathbf{1}_{\rn\backslash(2^{k+2}B)})(z)\r|^2\,\frac{dz dt}{t^{n+2}}\nonumber\\
&\quad =:\mathrm{I}_1+\mathrm{I}_2.\nonumber
\end{align}
Using the Minkowski inequality, we know that
\begin{align*}
\mathrm{I}_1\ls\frac{r_B}{\az}\lf[\int_{2^{k+2}B}|f(\xi)|
\lf\{\int_{2^{k-2}r_B/\az}^{2^{k+1}r_B}\int_{2^{k+1}B}|\phi_t(z-\xi)|^2\,\frac{dz dt}{t^{n+2}}\r\}^{\frac12}\,d\xi\r]^2,
\end{align*}
which, combined with the estimate that, for any $\xi\in\rn$,
$$\int_{2^{k+1}B}|\phi_t(z-\xi)|^2\,dz\le\frac{\|\phi\|_{L^\fz(\rn)}}{t^n}\|\phi_t\|_{L^1(\rn)}=
\frac{\|\phi\|_{L^\fz(\rn)}\|\phi\|_{L^1(\rn)}}{t^n},
$$
implies that, for any $x_1\in B$,
\begin{align}\label{4.16}
\mathrm{I}_1\ls\frac{r_B}{\az}\lf[\int_{2^{k+2}B}|f(\xi)|\,d\xi\r]^2\int_{2^{k-2}r_B/\az}^{2^{k+1}r_B}\,\frac{dt}{t^{2n+2}}
\ls\az^{2n}2^{-k}\lf[\cm(f)(x_1)\r]^2.
\end{align}
Furthermore, similarly to the proof of \eqref{4.11}, we find that, for any given $\epsilon\in(0,\fz)$,
and for any $(z,t)\in T(2^{k+1}B)$ and any $x_1\in B$,
\begin{align*}
\lf|\phi_t\ast (f\mathbf{1}_{\rn\backslash(2^{k+2}B)})(z)\r|
\ls2^{-k\epsilon}\lf(\frac{t}{r_B}\r)^\epsilon\cm(f)(x_1),
\end{align*}
which further implies that, for any $x_1\in B$,
\begin{align}\label{4.17}
\mathrm{I}_2\ls\frac{r_B}{\az}\lf[\int_{2^{k-2}r_B/\az}^{2^{k+1}r_B}\int_{2^{k+1}B}2^{-2k\epsilon}\lf(\frac{t}{r_B}\r)^{2\epsilon}
\,\frac{dz dt}{t^{n+2}}\r]\lf[\cm(f)(x_1)\r]^2
\ls2^{-k}\az^{n-2\epsilon}\lf[\cm(f)(x_1)\r]^2.
\end{align}
Thus, by \eqref{4.15}, \eqref{4.16}, and \eqref{4.17}, we conclude that,
for any $x,\,y,\,x_1\in B$,
\begin{align*}
\lf[\int_{T(2^{k+1}B)\backslash T(2^kB)}\lf|\Phi\lf(\frac{x-z}{\az t}\r)-\Phi\lf(\frac{y-z}{\az t}\r)\r|\lf|\phi_t\ast (f\mathbf{1}_{\rn\backslash(8B)})(z)\r|^2\,\frac{dz dt}{t^{n+1}}\r]^{\frac12}
\ls2^{-\frac k2}\az^{n}\cm(f)(x_1),
\end{align*}
which, together with \eqref{4.10}, \eqref{4.12}, and \eqref{4.13}, further implies that, for any $x,\,x_1\in B$,
\begin{align}\label{4.18}
|R_B(x)|&\ls\inf_{y\in B}\lf\{\wz{S}_{\az}\lf(f\mathbf{1}_{\rn\backslash(8B)}\r)(y)\r\}
+\sum_{k=0}^\fz2^{-\frac k2}\az^{n}\cm(f)(x_1)\\
&\ls\lf(\fint_B|R_B|^{\nu}\,dy\r)^{\frac{1}{\nu}}+\az^{n}\cm(f)(x_1).\nonumber
\end{align}
From \eqref{4.9} and \eqref{4.18}, it follows that, for any $x_1,\,x_2\in B$,
\begin{align}\label{4.19}
\lf\|R_B\r\|_{L^\fz(B)}
&\ls\lf(\fint_B|R_B|^{\nu}\,dy\r)^{\frac{1}{\nu}}+\az^{n}\cm(f)(x_1)\\
&\ls\lf(\fint_B|F|^{\nu}\,dy\r)^{\frac{1}{\nu}}+\lf(\fint_B|F_B|^{\nu}\,dy\r)^{\frac{1}{\nu}}+\cm(\az^{n}f)(x_1)\nonumber\\
&\ls\lf[\cm\lf(|F|^{\nu}\r)(x_2)\r]^{\frac1{\nu}}+\cm\lf(\az^n f\r)(x_1).\nonumber
\end{align}
Then, by \eqref{4.9} and \eqref{4.19}, we know that \eqref{1.4} and \eqref{1.5} hold true
for $p_1:=\nu$, $p_2:=1$, $p_3:=\fz$, and $\epsilon:=0$. Thus, applying Theorem \ref{t1.2} and \eqref{4.8}, we complete
the proof of Theorem \ref{t2.3}.
\end{proof}

To prove Theorem \ref{t2.4}, we need the following Kolmogorov type inequality for the fractional integral operator $I_\az$.

\begin{lemma}\label{l4.4}
Let $\az\in(0,n)$, $\nu\in(0,1)$, and the measurable subset $E$ of $\rn$ satisfy $|E|<\fz$.
Then there exists a positive constant $C$, depending only on $n$, $\az$, and $\nu$, such that, for any
$f\in L^1(\rn)$,
$$\int_E|I_\az(f)|^\nu\,dx\le C|E|^{1-\frac{\nu(n-\az)}{n}}\|f\|_{L^1(\rn)}^\nu.
$$
\end{lemma}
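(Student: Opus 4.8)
The plan is to deduce Lemma \ref{l4.4} from the endpoint weak-type boundedness of the fractional integral operator, namely that $I_\az$ is bounded from $L^1(\rn)$ to $L^{\frac{n}{n-\az},\,\fz}(\rn)$ (which is classical and was already recalled in Section \ref{s1}), via the standard Kolmogorov-type truncation of the distribution function. In other words, Lemma \ref{l4.4} is nothing but the variant of the Kolmogorov inequality in Lemma \ref{l4.2} in which the target space $L^{1,\,\fz}(\rn)$ is replaced by the weak Lebesgue space $L^{q,\,\fz}(\rn)$ with the exponent $q:=\frac{n}{n-\az}\in(1,\fz)$.

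First, I would assume without loss of generality that $\|f\|_{L^1(\rn)}\in(0,\fz)$, since otherwise the inequality is trivial. Set $g:=I_\az(f)$ and let $A\in(0,\fz)$ denote the norm of $I_\az$ as an operator from $L^1(\rn)$ to $L^{q,\,\fz}(\rn)$, so that, for any $\lz\in(0,\fz)$,
$$
\lf|\lf\{x\in\rn:\ |g(x)|>\lz\r\}\r|\le\lf(\frac{A\|f\|_{L^1(\rn)}}{\lz}\r)^{q}.
$$
Then, by the layer-cake (distribution-function) formula, for any $\lz_0\in(0,\fz)$ to be chosen,
\begin{align*}
\int_E|g|^{\nu}\,dx&=\nu\int_0^\fz\lz^{\nu-1}\lf|\lf\{x\in E:\ |g(x)|>\lz\r\}\r|\,d\lz\\
&=\nu\int_0^{\lz_0}\lz^{\nu-1}\lf|\lf\{x\in E:\ |g(x)|>\lz\r\}\r|\,d\lz
+\nu\int_{\lz_0}^\fz\lz^{\nu-1}\lf|\lf\{x\in E:\ |g(x)|>\lz\r\}\r|\,d\lz.
\end{align*}
I would bound the first integral by $|E|\lz_0^{\nu}$, using the trivial inclusion $\{x\in E:\ |g(x)|>\lz\}\subset E$, and the second integral, using the displayed weak-type estimate together with the fact that $\nu<1<q$ (which guarantees the convergence $\int_{\lz_0}^\fz\lz^{\nu-1-q}\,d\lz=\frac{\lz_0^{\nu-q}}{q-\nu}$), by $\frac{\nu}{q-\nu}\lf(A\|f\|_{L^1(\rn)}\r)^{q}\lz_0^{\nu-q}$.

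Adding these two bounds and then choosing $\lz_0:=A\|f\|_{L^1(\rn)}|E|^{-1/q}$ so as to balance the two terms, I would obtain
$$
\int_E|I_\az(f)|^{\nu}\,dx\le\frac{q}{q-\nu}\lf(A\|f\|_{L^1(\rn)}\r)^{\nu}|E|^{1-\frac{\nu}{q}},
$$
which is precisely the asserted inequality, since $1-\frac{\nu}{q}=1-\frac{\nu(n-\az)}{n}$ and $C:=\frac{q}{q-\nu}A^{\nu}$ depends only on $n$, $\az$, and $\nu$. I do not expect any genuine obstacle here: the sole non-elementary input is the endpoint weak-type boundedness $I_\az:\,L^1(\rn)\to L^{\frac{n}{n-\az},\,\fz}(\rn)$, which may be assumed, and the remaining work is the routine optimization of the splitting parameter $\lz_0$, the only point requiring even a moment's thought being the convergence of the tail integral, which is ensured by $\nu<q$.
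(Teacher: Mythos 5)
Your proof is correct and follows essentially the same route as the paper: both arguments combine the classical weak-type boundedness of $I_\az$ from $L^1(\rn)$ to $L^{\frac{n}{n-\az},\,\fz}(\rn)$ with the layer-cake formula, splitting the $\lz$-integral at the same threshold $\lz_0\sim\|f\|_{L^1(\rn)}|E|^{-\frac{n-\az}{n}}$ (the paper phrases this as integrating the minimum of $|E|$ and the weak-type bound). The only cosmetic difference is that you present the splitting point as the result of an optimization, while the paper writes it down directly.
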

\begin{proof}
Let $f\in L^1(\rn)$. From the well-known boundedness of $I_\az$ from $L^1(\rn)$ to $L^{\frac{n}{n-\az},\,\fz}(\rn)$
(see, for instance, \cite[p.\,119, Theorem 1]{s70}), we deduce that there exists a positive constant $C$, depending only on $n$ and $\az$,
such that $\|I_\az(f)\|_{L^{\frac{n}{n-\az},\,\fz}(\rn)}\le C\|f\|_{L^1(\rn)}$. By this, we conclude that
\begin{align*}
\int_E|I_\az(f)|^\nu\,dx&=\nu\int_0^\fz\lz^{\nu-1}|\{x\in E:\ |I_\az(f)(x)|>\lz\}|\,d\lz\\
&\le\nu\int_0^\fz\lz^{\nu-1}\min\lf\{|E|,\,\lf[\frac{C}{\lz}\|f\|_{L^1(\rn)}\r]^{\frac{n}{n-\az}}\r\}d\lz\\
&=\nu\int_0^{\frac{C\|f\|_{L^1(\rn)}}{|E|^{(n-\az)/n}}}\lz^{\nu-1}|E|\,d\lz+
\nu\int_{\frac{C\|f\|_{L^1(\rn)}}{|E|^{(n-\az)/n}}}^\fz C^{\frac{n}{n-\az}}\lz^{\nu-1-\frac{n}{n-\az}}
\|f\|_{L^1(\rn)}^{\frac{n}{n-\az}}\,d\lz\\
&\ls|E|^{1-\nu(\frac{n-\az}{n})}\|f\|_{L^1(\rn)}^\nu.
\end{align*}
This finishes the proof of Lemma \ref{l4.4}.
\end{proof}

Next, we prove Theorem \ref{t2.4} via using Theorem \ref{t1.3} and Lemma \ref{l4.4}.

\begin{proof}[Proof of Theorem \ref{t2.4}]
Let $f\in L^\fz_{\rm c}(\rn)$ and $F:=I_{\az}(f)$. For any ball $B:=B(x_B,r_B)$ of $\rn$ with $x_B\in \rn$ and $r_B\in(0,\fz)$,
let $F_B:=I_{\az}(f\mathbf{1}_{8B})$ and $R_B:=I_{\az}(f-f\mathbf{1}_{8B})$. Then $|F|\le|F_B|+|R_B|$.
Let $\nu\in(0,1)$, $p\in(1,\frac{n}{\az})$, and $q\in(1,\fz)$ satisfy $\frac{1}{q}=\frac{1}{p}-\frac{\az}{n}$.
From the assumption $\omega\in A_\fz(\rn)$ and Lemma \ref{l3.1}(i), it follows that there exists an $s\in(1,\fz]$
such that $\omega\in RH_s(\rn)$.

By Lemma \ref{l4.4}, we find that
$$\lf[\fint_B|I_\az(f\mathbf{1}_{8B})|^{\nu}\,dx\r]^{\frac1{\nu}}\ls|B|^{\frac{\az}{n}-1}\int_{8B}|f|\,dx,
$$
which further implies that, for any $x_1\in B$,
\begin{align}\label{4.20}
\lf(\fint_B|F_B|^{\nu}\,dx\r)^{\frac1{\nu}}\ls|8B|^{\frac{\az}{n}}\fint_{8B}|f|\,dx
\ls\cm_{\az/n}(f)(x_1).
\end{align}

For any $x,\,y\in\rn$ with $x\neq y$, let $K(x,y):=\frac{1}{|x-y|^{n-\az}}$.
Then it is easy to see that, for any $x,\,y,\,h\in\rn$ satisfying $0<|h|\le\frac{|x-y|}{2}$,
$$|K(x+h,y)-K(x,y)|\ls|h||x-y|^{\az-n-1},
$$
which implies that, for any $x,\,y,\,x_1\in B$,
\begin{align*}
&\lf|I_\az(f\mathbf{1}_{\rn\backslash(8B)})(x)-I_\az(f\mathbf{1}_{\rn\backslash(8B)})(y)\r|\\ \nonumber
&\quad\ls\int_{\rn\backslash(8B)}|K(x,z)-K(y,z)||f(z)|\,dz\ls|x-y|
\int_{\rn\backslash(8B)}\frac{|f(z)|}{|x-z|^{n+1-\az}}\,dz\\ \nonumber
&\quad\ls r_B\sum_{j=2}^\fz\frac{1}{(2^jr_B)^{n+1-\az}}\int_{2^{j+1}B}|f(y)|\,dy
\ls\sum_{j=2}^\fz\frac{1}{2^j}\lf(|2^{j+1}B|^{\frac{\az}{n}}\fint_{2^{j+1}B}|f|\,dz\r)\\ \nonumber
&\quad\ls\sum_{j=2}^\fz\frac{1}{2^j}\cm_{\az/n}(f)(x_1)
\ls\cm_{\az/n}(f)(x_1).
\end{align*}
From this and \eqref{4.20}, we deduce that, for any $x_1,\,x_2\in B$,
\begin{align}\label{4.21}
\lf\|R_B\r\|_{L^\fz(B)}&\ls
\inf_{y\in B}\lf\{\lf|I_\az(f\mathbf{1}_{\rn\backslash(8B)})(y)\r|\r\}+\cm_{\az/n}(f)(x_1)\\
&\ls\lf[\fint_B|I_\az(f)|^\nu\,dz\r]^{\frac1\nu}+\lf[\fint_B|I_\az(f\mathbf{1}_{8B})|^{\nu}\,dz\r]^{\frac1{\nu}}
+\cm_{\az/n}(f)(x_1)\nonumber\\
&\ls\lf(\fint_B|F|^{\nu}\,dz\r)^{\frac1{\nu}}+\cm_{\az/n}(f)(x_1)
\ls\cm_{\az/n}(f)(x_1)+\lf[\cm(|F|^{\nu})(x_2)\r]^{\frac1{\nu}}.\nonumber
\end{align}
Then, by \eqref{4.20} and \eqref{4.21}, we conclude that \eqref{1.4} and \eqref{1.5} hold true
for $p_1:=\nu,\,p_2:=1$, $p_3:=\fz$, and $\epsilon:=0$. Thus, by Theorem \ref{t1.3}, we then complete
the proof of Theorem \ref{t2.4}.
\end{proof}

\section{Proofs of Theorems \ref{t2.5}, \ref{t2.6}, and \ref{t2.7}\label{s5}}

In this section, we give the proofs of Theorems \ref{t2.5}, \ref{t2.6}, and \ref{t2.7}
via using Theorems \ref{t1.2} and \ref{t1.3}.

To show Theorem \ref{t2.5}, we need the following Lemma \ref{l5.1}, which was established in
\cite[Lemma 5.3]{am06} (see also \cite{a07}).

\begin{lemma}\label{l5.1}
Let $L$ be as in \eqref{2.3} and $p_0,\,q_0\in(q_-(L),q_+(L))$ with $p_0\le q_0$. Assume that $B:=B(x_B,r_B)$
is a ball of $\rn$ with $x_B\in \rn$ and $r_B\in(0,\fz)$. Then, for any $m\in\nn$ large enough and $f\in L^\fz_{\rm c}(\rn)$,
\begin{align*}
\lf[\fint_B\lf|\nabla L^{-1/2}\lf(I-e^{-r_B^2L}\r)^m(f)\r|^{p_0}\,dx\r]^{\frac{1}{p_0}}
\le\sum_{j=1}^\fz g_1(j)\lf(\fint_{2^{j+1}B}|f|^{p_0}\,dx\r)^{\frac{1}{p_0}},
\end{align*}
and, for any $k\in\{1,\,\ldots,\,m\}$ and $f\in L^{p_0}(\rn)$ satisfying $\nabla f\in L^{p_0}(\rn)$,
\begin{align*}
\lf[\fint_B\lf|\nabla e^{-kr_B^2L}(f)\r|^{q_0}\,dx\r]^{\frac{1}{q_0}}
\le\sum_{j=1}^\fz g_2(j)\lf(\fint_{2^{j+1}B}|\nabla f|^{p_0}\,dx\r)^{\frac{1}{p_0}},
\end{align*}
where, for any $j\in\nn$, $g_1(j):=C_{(m)}2^{j\tz}4^{-mj}$ and $g_2(j):=C_{(m)}2^{j}(\sum_{\ell=j}^\fz2^{\ell\tz}e^{-\az 4^\ell})$
for some $\tz,\,\az\in(0,\fz)$. Here $C_{(m)}$ is a positive constant depending only on $m$.
\end{lemma}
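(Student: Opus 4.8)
The plan is to reconstruct the argument of \cite[Lemma 5.3]{am06}, which relies solely on: the $L^{p_0}$--$L^{q_0}$ off-diagonal (Gaffney--Davies type) estimates for the families $\{e^{-tL}\}_{t>0}$ and $\{\sqrt t\,\nabla e^{-tL}\}_{t>0}$ on the range $(q_-(L),q_+(L))$ (see \cite{a07}); the bound $\|L^me^{-tL}\|_{L^{p_0}(\rn)\to L^{p_0}(\rn)}\ls t^{-m}$; the $L^{p_0}(\rn)$-boundedness of the Riesz transform $\nabla L^{-1/2}$ on the range $(q_-(L),q_+(L))$ (see \cite{a07}); the identity $e^{-tL}\mathbf{1}=\mathbf{1}$; and the subordination formula $L^{-1/2}=\pi^{-1/2}\int_0^\fz e^{-sL}\,s^{-1/2}\,ds$. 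Throughout, I write $C_j$ for the annulus $2^{j+1}B\setminus 2^jB$ when $j\ge2$ and $C_1:=4B$, so that $\dist(B,C_j)\sim 2^jr_B$ for $j\ge2$, and I set $\tz:=n/p_0$.

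For the first estimate I would use subordination to write $\nabla L^{-1/2}(I-e^{-r_B^2L})^m=\pi^{-1/2}\int_0^\fz \nabla e^{-sL}(I-e^{-r_B^2L})^m s^{-1/2}\,ds$ and decompose $f=\sum_{j\ge1}f\mathbf{1}_{C_j}$. For $j=1$, the $L^{p_0}(\rn)$-boundedness of $\nabla L^{-1/2}$ and of $(I-e^{-r_B^2L})^m$ at once gives $\fint_B|\nabla L^{-1/2}(I-e^{-r_B^2L})^m(f\mathbf{1}_{4B})|^{p_0}\,dx\ls\fint_{4B}|f|^{p_0}\,dx$, i.e. a bounded $g_1(1)$. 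For $j\ge2$ the key input is the composite off-diagonal bound, a consequence of the off-diagonal estimates for $\{e^{-tL}\}$ and $\{\sqrt t\,\nabla e^{-tL}\}$ together with $\|L^me^{-tL}\|\ls t^{-m}$ (see \cite{a07}),
\[
\lf\|\nabla e^{-sL}(I-e^{-r_B^2L})^m(g\mathbf{1}_F)\r\|_{L^{p_0}(E)}\ls s^{-1/2}\,e^{-c[\dist(E,F)]^2/s}\min\lf\{1,(r_B^2/s)^m\r\}\|g\|_{L^{p_0}(F)} .
\]
Taking $E:=B$, $F:=C_j$, integrating against $s^{-1/2}\,ds$ and substituting $s=r_B^2\tau$ reduces the $j$-th term (after dividing by $|B|^{1/p_0}$) to
\[
2^{j\tz}\lf(\fint_{2^{j+1}B}|f|^{p_0}\,dx\r)^{\frac1{p_0}}\int_0^\fz \tau^{-1}e^{-c4^j/\tau}\min\{1,\tau^{-m}\}\,d\tau ,
\]
and I expect the last integral to be $\ls 4^{-mj}$ by a routine splitting at $\tau=1$ and $\tau=4^j$ (on $0<\tau<1$ it is $\ls e^{-c'4^j}$; on $1<\tau<4^j$ the substitution $u=4^j/\tau$ turns it into $4^{-mj}\int_0^\fz u^{m-1}e^{-cu}\,du$; on $\tau>4^j$ it is $\ls 4^{-mj}$), which yields the first inequality with $g_1(j)=C_{(m)}2^{j\tz}4^{-mj}$.

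For the second estimate I would use $e^{-kr_B^2L}\mathbf{1}=\mathbf{1}$ to write $\nabla e^{-kr_B^2L}f=(kr_B^2)^{-1/2}\,\sqrt{kr_B^2}\,\nabla e^{-kr_B^2L}(f-f_B)$ and decompose $f-f_B=\sum_{j\ge1}(f-f_B)\mathbf{1}_{C_j}$. The $L^{p_0}$--$L^{q_0}$ off-diagonal estimate for $\sqrt t\,\nabla e^{-tL}$ at $t=kr_B^2\sim r_B^2$, together with $\dist(B,C_j)\sim2^jr_B$, gives after dividing by $|B|^{1/q_0}$
\[
\lf(\fint_B\lf|\nabla e^{-kr_B^2L}f\r|^{q_0}\,dx\r)^{\frac1{q_0}}\ls\frac1{r_B}\sum_{j\ge1}e^{-\az4^j}2^{j\tz}\lf(\fint_{2^{j+1}B}|f-f_B|^{p_0}\,dx\r)^{\frac1{p_0}} ,
\]
and then the Poincar\'e inequality plus telescoping give $(\fint_{2^{j+1}B}|f-f_B|^{p_0}\,dx)^{1/p_0}\ls\sum_{\ell=0}^{j+1}2^\ell r_B(\fint_{2^{\ell+1}B}|\nabla f|^{p_0}\,dx)^{1/p_0}$. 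Substituting and interchanging the order of summation (so the $\ell$-sum becomes the outer one and is renamed $j$) produces exactly $\sum_{j\ge1}g_2(j)(\fint_{2^{j+1}B}|\nabla f|^{p_0}\,dx)^{1/p_0}$ with $g_2(j)=C_{(m)}2^{j}\sum_{\ell=j}^\fz 2^{\ell\tz}e^{-\az4^\ell}$.

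The main obstacle is not this bookkeeping but the off-diagonal estimates themselves, in particular the composite bound for $\nabla e^{-sL}(I-e^{-r_B^2L})^m$ on the sharp interval $(q_-(L),q_+(L))$ — this is exactly where the hypotheses on $p_0,q_0$ and the freedom to take $m$ large are used — and for these I would invoke \cite{a07} and \cite[Section 3 and Lemma 5.3]{am06} rather than reprove them here.
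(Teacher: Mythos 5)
You should first note that the paper contains no proof of Lemma \ref{l5.1} at all: it is quoted verbatim from Auscher--Martell \cite[Lemma 5.3]{am06} (see also \cite{a07}), so the only thing to compare your sketch with is the argument in that reference. Your outline does follow the same architecture as the source: subordination plus annular decomposition plus $L^{p_0}$-boundedness of $\nabla L^{-1/2}$ for the local term of the first estimate, and, for the second estimate, the conservation property $e^{-tL}\mathbf{1}=\mathbf{1}$, the $L^{p_0}$--$L^{q_0}$ off-diagonal bounds for $\sqrt t\,\nabla e^{-tL}$ at $t=kr_B^2$, and Poincar\'e with telescoping; your bookkeeping producing $g_2(j)=C_{(m)}2^{j}\sum_{\ell\ge j}2^{\ell\tz}e^{-\az 4^{\ell}}$ with $\tz=n/p_0$ is correct (up to a harmless index shift absorbed into $\az$).

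There is, however, one intermediate step that is wrong as stated and that your first computation actually relies on: the composite bound $\|\nabla e^{-sL}(I-e^{-r_B^2L})^m(g\mathbf{1}_F)\|_{L^{p_0}(E)}\ls s^{-1/2}e^{-c[\dist(E,F)]^2/s}\min\{1,(r_B^2/s)^m\}\|g\|_{L^{p_0}(F)}$ fails for $s\ll r_B^2$. Already for $m=1$ the term $\nabla e^{-(s+r_B^2)L}$ contributes roughly $r_B^{-1}e^{-c[\dist(E,F)]^2/r_B^2}$, which is not dominated by $s^{-1/2}e^{-c[\dist(E,F)]^2/s}$ as $s\to0$; the correct uniform bound carries $e^{-c[\dist(E,F)]^2/(s+mr_B^2)}$, and with that factor the integral against $s^{-1/2}\,ds$ diverges at $s=0$, so your one-line substitution $s=r_B^2\tau$ cannot be run with the true estimate. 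The standard repair (and what the cited proof effectively does) is to split the $s$-integral at $r_B^2$: for $s\le r_B^2$ expand $(I-e^{-r_B^2L})^m$ binomially and apply the Gaffney bound to each $\nabla e^{-(s+kr_B^2)L}$ separately, the $k=0$ term retaining the decay $e^{-c4^jr_B^2/s}$ (which makes $\int_0^{r_B^2}s^{-1}e^{-c4^jr_B^2/s}\,ds\ls e^{-c4^j}$ finite) and the $k\ge1$ terms being $O(r_B^{-1}e^{-c4^j/m})$ uniformly in $s$; for $s\ge r_B^2$ write $(I-e^{-r_B^2L})^m=\int_{[0,r_B^2]^m}L^me^{-|u|L}\,du$ and use off-diagonal bounds for $\sqrt t\,\nabla(tL)^me^{-tL}$ to gain $(r_B^2/s)^m$. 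Both regimes then sum to $C_{(m)}(e^{-c_m4^j}+4^{-mj})\ls C_{(m)}4^{-mj}$, so the conclusion $g_1(j)=C_{(m)}2^{j\tz}4^{-mj}$ is unaffected; the flaw is a fixable technical slip, not a wrong approach.
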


Now, we prove Theorem \ref{t2.5} via using Lemma \ref{l5.1} and Theorem \ref{t1.2}.

\begin{proof}[Proof of Theorem \ref{t2.5}]
From the assumption $v^{1-(\frac{q}{q_-(L)})'}\in A_\fz(\rn)$, it follows that there exists an $r_0\in(1,\fz)$
such that $v^{1-(\frac{q}{q_-(L)})'}\in RH_{r_0}(\rn)$. Let $p_0\in(q_-(L),q_+(L))$ be such that
$(1-(\frac{q}{q_-(L)})')r_0=1-(\frac{q}{p_0})'$. This, combined with the assumption \eqref{2.4},
further implies that, for any ball $B$ of $\rn$,
\begin{align*}
&\lf\{\lf[\fint_B \omega\,dx\r]^{\frac{p_0}{q}}
\lf[\fint_B v^{1-(\frac{q}{p_0})'}\,dx\r]^{1/(\frac{q}{p_0})'}\r\}^{\frac{q_-{(L)}}{p_0}}\\ \nonumber
&\quad=\lf[\fint_B \omega\,dx\r]^{\frac{q_-{(L)}}{q}}
\lf[\fint_B v^{[1-(\frac{q}{q_{(L)}})']r_0}\,dx\r]^{\frac{1}{r_0}\frac{1}{(\frac{q}{q_-{(L)}})'}}\\ \nonumber
&\quad\ls\lf[\fint_B \omega\,dx\r]^{\frac{q_-{(L)}}{q}}\lf[\fint_B v^{1-(\frac{q}{q_-{(L)}})'}\,dx\r]^{\frac1{(\frac{q}{q_-{(L)}})'}}
\ls\lf[\omega,v^{1-(\frac{q}{q_-(L)})'}\r]_{A_{\frac{q}{q_-(L)}}(\rn)}^{\frac{q_-{(L)}}{q}}.
\end{align*}
By this, we conclude that
\begin{align*}
\lf[\omega,v^{1-(\frac{q}{p_0})'}\r]_{A_{\frac{q}{p_0}}(\rn)}
=\sup_{B\subset\rn}\lf[\fint_B \omega\,dx\r]
\lf[\fint_B v^{1-(\frac{q}{p_0})'}\,dx\r]^{\frac{q}{p_0}-1}\ls\lf[\omega,v^{1-(\frac{q}{q_-(L)})'}\r]_{A_{\frac{q}{q_-(L)}}(\rn)}<\fz.
\end{align*}
Moreover, from the assumption $\omega\in RH_s(\rn)$ and $s>(\frac{q_+(L)}{q})'$, we deduce that there exists a $q_0\in(q,q_+(L))$
such that $\omega\in RH_s(\rn)$ and $s>(\frac{q_0}{q})'$.

Let $m\in\nn$ be large enough such that $\sum_{j=1}^\fz g_1(j)\ls1$ and $\sum_{j=1}^\fz g_2(j)\ls1$,
where, for any $j\in\nn$, $g_1(j)$ and $g_2(j)$ are as in Lemma \ref{l5.1}. Assume that $B:=B(x_B,r_B)$,
with $x_B\in\rn$ and $r_B\in(0,\fz)$, is a ball of $\rn$ and $f\in L^\fz_{\rm c}(\rn)$.
Let $F:=\nabla L^{-1/2}(f)$, $F_B:=\nabla L^{-1/2}(I-e^{-r_B^2}L)^m(f)$,
and $R_B:=\nabla L^{-1/2}[I-(I-e^{-r_B^2}L)^m](f)$. Then $|F|\le |F_B|+|R_B|$ on $B$. By Lemma \ref{l5.1},
we find that, for any $x_1,\,x_2\in B$,
\begin{align}\label{5.1}
\lf(\fint_B|F_B|^{p_0}\,dx\r)^{\frac{1}{p_0}}\ls
\sum_{j=1}^\fz g_1(j)\lf(\fint_{2^{j+1}B}|f|^{p_0}\,dx\r)^{\frac{1}{p_0}}\ls\lf[\cm(|f|^{p_0})(x_1)\r]^{\frac{1}{p_0}}
\end{align}
and
\begin{align}\label{5.2}
\lf(\fint_B|R_B|^{q_0}\,dx\r)^{\frac{1}{q_0}}&\ls\sum_{k=1}^m
\lf[\fint_B\lf|\nabla e^{-kr_B^2L} L^{-1/2}(f)\r|^{q_0}\,dx\r]^{\frac{1}{q_0}}\\
&\ls\sum_{j=1}^\fz g_2(j)\lf[\fint_{2^{j+1}B}|\nabla L^{-1/2}(f)|^{p_0}\,dx\r]^{\frac{1}{p_0}}\nonumber\\
&\ls\sum_{j=1}^\fz g_2(j)\lf[\cm(|\nabla L^{-1/2}(f)|^{p_0})(x_2)\r]^{\frac{1}{p_0}}
\ls\lf[\cm(|F|^{p_0})(x_2)\r]^{\frac{1}{p_0}}.\nonumber
\end{align}
From \eqref{5.1} and \eqref{5.2}, it follows that \eqref{1.4} and \eqref{1.5} hold true
for $p_1=p_2:=p_0$, $p_3:=q_0$, and $\epsilon:=0$. Thus, applying Theorem \ref{t1.2}, we then complete
the proof of Theorem \ref{t2.5}.
\end{proof}

To prove Theorem \ref{t2.6} by using Theorem \ref{t1.2}, we need the following Lemma \ref{l5.2},
which was obtained in \cite[pp.\,732-734]{am06}.

\begin{lemma}\label{l5.2}
Let $L$ be as in \eqref{2.3} and $p_0,\,q_0\in(q_-(L),q_+(L))$ with $p_0\le q_0$. Assume that $B:=B(x_B,r_B)$,
with $x_B\in \rn$ and $r_B\in(0,\fz)$, is a ball of $\rn$. Then, for any $m\in\nn$ large
enough and $f\in L^\fz_{\rm c}(\rn)$,
\begin{align*}
\lf[\fint_B\lf|G_L\lf(I-e^{-r_B^2L}\r)^m(f)\r|^{p_0}\,dx\r]^{\frac{1}{p_0}}
\le\sum_{j=1}^\fz g_1(j)\lf(\fint_{2^{j+1}B}|f|^{p_0}\,dx\r)^{\frac{1}{p_0}},
\end{align*}
and, for any $k\in\{1,\,\ldots,\,m\}$,
\begin{align*}
\lf[\fint_B\lf|G_L\lf(e^{-kr_B^2L}(f)\r)\r|^{q_0}\,dx\r]^{\frac{1}{q_0}}
\le\sum_{j=1}^\fz g_2(j)\lf[\fint_{2^{j+1}B}|G_L(f)|^{p_0}\,dx\r]^{\frac{1}{p_0}},
\end{align*}
where, for any $j\in\nn$, $g_1(j):=C_{(m)}2^{j\tz}4^{-mj}$ and $g_2(j):=C_{(m)}2^{j}(\sum_{\ell=j}^\fz2^{\ell\tz}e^{-\az 4^\ell})$
for some $\tz,\,\az\in(0,\fz)$. Here $C_{(m)}$ is a positive constant depending only on $m$.
\end{lemma}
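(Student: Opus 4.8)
The plan is to obtain Lemma \ref{l5.2} by recalling the off-diagonal (``$L^{p_0}$--$L^{q_0}$'') argument of Auscher and Martell \cite{am06}, which runs on the same scheme used for Lemma \ref{l5.1}. The two structural inputs are: (a) the square function $G_L$ is bounded on $L^{r}(\rn)$ for every $r\in(q_-(L),q_+(L))$, hence in particular on $L^{p_0}(\rn)$; and (b) the $L^2(\frac{dt}{t})$-valued families $\{\sqrt s\,\nabla e^{-sL}\}_{s>0}$ and $\{e^{-sL}\}_{s>0}$ satisfy $L^{p_0}$--$L^{q_0}$ off-diagonal (Gaffney-type) estimates with Gaussian-type decay in $\dist(\cdot,\cdot)/\sqrt s$, available precisely because $p_0\le q_0$ both lie in the relevant open intervals. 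Throughout one writes $f=f\mathbf{1}_{4B}+\sum_{j\ge2}f\mathbf{1}_{C_j(B)}$ with $C_j(B):=2^{j+1}B\setminus2^{j}B$, and treats each annular piece separately; the condition $p_0\le q_0$ is also what lets one, in the second inequality, pass from an $L^{q_0}$-average over $B$ to $L^{p_0}$-averages over the dilates $2^{j+1}B$.

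For the first inequality the local term is controlled by the $L^{p_0}(\rn)$-boundedness of $G_L(I-e^{-r_B^2L})^m$ (a composition of $G_L$, bounded on $L^{p_0}$, with the uniformly $L^{p_0}$-bounded operators $e^{-kr_B^2L}$), contributing to $g_1(1)$. For $j\ge2$ one estimates $[\fint_B|G_L(I-e^{-r_B^2L})^m(f\mathbf{1}_{C_j(B)})|^{p_0}\,dx]^{1/p_0}$ by the vector-valued off-diagonal estimates for $t\mapsto\sqrt t\,\nabla e^{-tL}(I-e^{-r_B^2L})^m$: the spatial separation $\dist(B,C_j(B))\sim2^{j}r_B$ supplies a Gaussian factor $\exp(-c\,4^{j}r_B^2/t)$, while expanding $(I-e^{-r_B^2L})^m$ into semigroup operators at times $\sim r_B^2$ supplies the polynomial gain $\min\{1,(r_B^2/t)^{m}\}$. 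Carrying out $\int_0^\fz\exp(-c\,4^jr_B^2/t)\min\{1,(r_B^2/t)^{m}\}\,\frac{dt}{t}\sim4^{-mj}$, and the residual $2^{j\tz}$ in $g_1(j)$ absorbs the volume ratios $|2^{j+1}B|/|B|$ together with the mild polynomial loss intrinsic to an $L^{p_0}$--$L^{q_0}$ (rather than $L^{p_0}$--$L^{p_0}$) bound; one then takes $m$ large enough that $\sum_{j\ge1}2^{j\tz}4^{-mj}<\fz$.

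For the second inequality one argues similarly but using $\nabla e^{-tL}e^{-kr_B^2L}=\nabla e^{-(t+kr_B^2)L}$, so that $G_L(e^{-kr_B^2L}f)$ is the ``tail'' $\int_{kr_B^2}^\fz|\nabla e^{-sL}f|^2\,ds$: the local part $f\mathbf{1}_{4B}$ is handled by the $L^{p_0}$-boundedness of $e^{-kr_B^2L}$ followed by that of $G_L$ together with subadditivity of $G_L$ (to replace $G_L(f\mathbf{1}_{4B})$ by $G_L(f)$ plus a far tail), giving the $j=1$ term; for the annuli $j\ge2$ there is no $(I-e^{-r_B^2L})^m$ factor, so one relies on the Gaussian decay tied to the separation $2^{j}r_B$ combined with the $L^{p_0}$--$L^{q_0}$ smoothing of $\sqrt s\,\nabla e^{-sL}$ for $s\gs r_B^2$, and summing the resulting tails over the dyadic time-scales $4^{\ell}r_B^2$, $\ell\ge j$, and re-expressing the surviving pieces through $G_L(f)$ on $2^{j+1}B$ yields the weights $g_2(j)=C_{(m)}2^{j}\sum_{\ell\ge j}2^{\ell\tz}e^{-\az4^{\ell}}$, summable in $j$ for $m$ large. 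The main obstacle is the bookkeeping of the $\ell^2(\frac{dt}{t})$-valued (square-function) structure of $G_L$ while composing with vector-valued off-diagonal estimates — one must invoke Minkowski-type inequalities only in the admissible direction and track the dependence of every constant on $m$, so that both $\sum_{j\ge1}g_1(j)$ and $\sum_{j\ge1}g_2(j)$ are finite for all sufficiently large $m$, which is what makes Lemma \ref{l5.2} usable in the proof of Theorem \ref{t2.6}.
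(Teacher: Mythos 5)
First, note that the paper does not actually prove this lemma: it is quoted verbatim from Auscher--Martell \cite[pp.\,732-734]{am06}, so there is no in-paper argument to match; what can be judged is whether your sketch reconstructs a correct proof. Your treatment of the \emph{first} inequality is essentially the standard one (annular decomposition of $f$, $L^{p_0}$-boundedness of $G_L$ in the range $(q_-(L),q_+(L))$ for the local piece, off-diagonal Gaussian decay for $\sqrt t\,\nabla e^{-tL}(I-e^{-r_B^2L})^m$ plus the gain $\min\{1,(r_B^2/t)^m\}$ for the annuli, Minkowski in the $L^2(dt)$ variable), and it is fine in outline; note only that both sides there are $L^{p_0}$-averages, so no $p_0$--$q_0$ smoothing enters at that stage.

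The \emph{second} inequality is where your proposal has a genuine gap: nothing in your argument can produce the $L^{p_0}$-averages of $G_L(f)$ on the right-hand side. If you decompose $f$ spatially and apply off-diagonal estimates to $\sqrt s\,\nabla e^{-sL}$ acting on the pieces $f\mathbf{1}_{C_j(B)}$, the output is controlled by averages of $|f|$ over $2^{j+1}B$, not of $G_L(f)$, and there is no local mechanism (no reverse square-function estimate) to convert the former into the latter. Conversely, your other devices lose the needed integrability improvement: the identity $\nabla e^{-tL}e^{-kr_B^2L}f=\nabla e^{-(t+kr_B^2)L}f$ gives the pointwise bound $G_L(e^{-kr_B^2L}f)\le G_L(f)$, and the subadditivity step $G_L(f\mathbf{1}_{4B})\le G_L(f)+G_L(f\mathbf{1}_{(4B)^\complement})$ likewise leads to the term $\lf(\fint_B|G_L(f)|^{q_0}\,dx\r)^{1/q_0}$, i.e.\ a $q_0$-average of $G_L(f)$ on $B$, which cannot in general be dominated by $p_0$-averages on dilates -- that self-improvement is exactly the nontrivial content of the lemma and comes from the smoothing operator, which you have discarded at that point. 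Similarly, handling the local part ``by $L^{p_0}$-boundedness of $e^{-kr_B^2L}$ followed by that of $G_L$'' only bounds an $L^{p_0}$-average on $B$, not the required $L^{q_0}$-average. The actual argument in \cite{am06} commutes the factors the other way, $\nabla e^{-tL}e^{-kr_B^2L}f=\nabla e^{-kr_B^2L}\lf(e^{-tL}f\r)$, so that the single smoothing operator $\nabla e^{-kr_B^2L}$ acts on the $L^2((0,\fz),dt)$-valued function $t\mapsto e^{-tL}f$, whose spatial gradient has pointwise Hilbert norm equal to $G_L(f)$; one then runs the Hilbert-valued analogue of the second estimate of Lemma \ref{l5.1} (i.e.\ the $L^{p_0}$--$L^{q_0}$ off-diagonal bounds for $\sqrt s\,\nabla e^{-sL}$ combined with conservation of constants and a Poincar\'e/telescoping argument across the annuli, with Minkowski's inequality to interchange the $dt$- and $dx$-integrations). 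It is this gradient-to-gradient mechanism -- absent from your sketch -- that yields averages of $G_L(f)$ on the right and produces the factor $2^{j}\sum_{\ell\ge j}2^{\ell\tz}e^{-\az4^\ell}$ in $g_2(j)$, which does not arise from summing dyadic time-scales as you suggest.
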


\begin{proof}[Proof of Theorem \ref{t2.6}]
Replacing Lemma \ref{l5.1} by Lemma \ref{l5.2} and repeating the proof of Theorem \ref{t2.5},
we then complete the proof of Theorem \ref{t2.6}.
\end{proof}

To show Theorem \ref{t2.7} by using Theorem \ref{t1.3}, we need the following Lemma \ref{l5.3},
which was established in \cite[Lemma 3.2]{am08}.

\begin{lemma}\label{l5.3}
Let $L$ be as in \eqref{2.3}, $\az\in(0,n)$, and $p_-(L)<p_0<s_0<q_0<p_+(L)$ satisfy $\frac{1}{p_0}-\frac{1}{s_0}=
\frac{\az}{n}$. Assume that $B:=B(x_B,r_B)$, with $x_B\in\rn$ and $r_B\in(0,\fz)$, is a ball of $\rn$.
Then, for any $m\in\nn$ large enough and $f\in L^\fz_{\rm c}(\rn)$,
\begin{align*}
\lf[\fint_B\lf|L^{-\az/2}\lf(I-e^{-r_B^2L}\r)^m(f)\r|^{s_0}\,dx\r]^{\frac{1}{s_0}}
\le\sum_{j=1}^\fz g_1(j)(2^{j+1}r_B)^{\az}\lf(\fint_{2^{j+1}B}|f|^{p_0}\,dx\r)^{\frac{1}{p_0}},
\end{align*}
and, for any $k\in\{1,\,\ldots,\,m\}$,
\begin{align*}
\lf[\fint_B\lf|L^{-\az/2}\lf(e^{-kr_B^2L}(f)\r)\r|^{q_0}\,dx\r]^{\frac{1}{q_0}}
\le\sum_{j=1}^\fz g_2(j)\lf[\fint_{2^{j+1}B}\lf|L^{-\az/2}(f)\r|^{s_0}\,dx\r]^{\frac{1}{s_0}},
\end{align*}
where, for any $j\in\nn$, $g_1(j):=C_{(m)}2^{-j(2m-n/s_0)}$ and $g_2(j):=C_{(m)}e^{-c4^j}$
for some positive constant $c$. Here $C_{(m)}$ is a positive constant depending only on $m$.
\end{lemma}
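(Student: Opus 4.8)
The statement is precisely \cite[Lemma 3.2]{am08}; the plan is to reproduce that argument, which rests entirely on the $L^a$--$L^b$ off-diagonal (Davies--Gaffney type) estimates for the semigroup $\{e^{-tL}\}_{t>0}$ and its holomorphic extension, combined with the smoothing/decay of the operators $(I-e^{-r_B^2L})^m$ and $e^{-kr_B^2L}$. The crucial structural input is $p_-(L)<p_0<s_0<q_0<p_+(L)$, so that $p_0,\,s_0,\,q_0$ all lie in the range on which, for $p_-(L)<a\le b<p_+(L)$ and any Borel sets $E,\,F\subset\rn$ with $\dist(E,F)=d$,
\begin{equation*}
\lf\|\mathbf{1}_E e^{-tL}\mathbf{1}_F\r\|_{L^a(\rn)\to L^b(\rn)}\ls t^{-\frac n2(\frac1a-\frac1b)}\exp\lf(-\frac{cd^2}{t}\r)
\end{equation*}
(see \cite{a07,am08}); since $\frac1{p_0}-\frac1{s_0}=\frac\az n$, the bound from $L^{p_0}$ to $L^{s_0}$ has order $t^{-\az/2}$, matching the homogeneity of $L^{-\az/2}$.

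For the first inequality, I would start from the subordination formula
\begin{equation*}
L^{-\az/2}\lf(I-e^{-r_B^2L}\r)^m=\frac1{\Gamma(\az/2)}\int_0^\fz t^{\az/2}\,e^{-tL}\lf(I-e^{-r_B^2L}\r)^m\,\frac{dt}{t},
\end{equation*}
and decompose $f=\sum_{j\ge1}f\mathbf{1}_{C_j(B)}$, where $C_1(B):=4B$ and $C_j(B):=(2^{j+1}B)\setminus(2^jB)$ for $j\ge2$, so that $\dist(B,C_j(B))\sim2^jr_B$. For the piece with data in $C_j(B)$, one uses the off-diagonal estimate for the composed family $\{e^{-tL}(I-e^{-r_B^2L})^m\}$: writing $I-e^{-sL}=\int_0^sLe^{-\sigma L}\,d\sigma$ and iterating $m$ times produces a gain of $\min\{1,(r_B^2/t)^m\}$ on top of the Gaussian factor $\exp(-c\,4^jr_B^2/t)$ of $e^{-tL}$. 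Evaluating $\int_0^\fz\min\{1,(r_B^2/t)^m\}\exp(-c\,4^jr_B^2/t)\,\frac{dt}{t}$ — the range $t\gtrsim4^jr_B^2$ yielding the polynomial term $\sim4^{-mj}$ and the remaining ranges contributing at most $e^{-c4^j}$ — gives $\|\mathbf{1}_BL^{-\az/2}(I-e^{-r_B^2L})^m\mathbf{1}_{C_j(B)}\|_{L^{p_0}\to L^{s_0}}\ls C_{(m)}2^{-2mj}$. Rewriting $\|f\mathbf{1}_{C_j(B)}\|_{L^{p_0}(\rn)}$ as $|2^{j+1}B|^{1/p_0}(\fint_{2^{j+1}B}|f|^{p_0}\,dx)^{1/p_0}$, dividing by $|B|^{1/s_0}$, and using $\frac1{p_0}-\frac1{s_0}=\frac\az n$ to collect the powers of $r_B$ and of $2^j$ produces the claimed bound with $g_1(j)=C_{(m)}2^{-j(2m-n/s_0)}$ and scaling factor $(2^{j+1}r_B)^\az$.

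For the second inequality, since $L^{-\az/2}$ commutes with $e^{-kr_B^2L}$, I would set $g:=L^{-\az/2}(f)$ and reduce to
\begin{equation*}
\lf[\fint_B\lf|e^{-kr_B^2L}(g)\r|^{q_0}\,dx\r]^{\frac1{q_0}}\ls\sum_{j=1}^\fz g_2(j)\lf[\fint_{2^{j+1}B}|g|^{s_0}\,dx\r]^{\frac1{s_0}}.
\end{equation*}
Decomposing $g=\sum_{j\ge1}g\mathbf{1}_{C_j(B)}$ and applying the $L^{s_0}$--$L^{q_0}$ off-diagonal estimate to the single operator $e^{-kr_B^2L}$ ($1\le k\le m$), the Gaussian factor $e^{-c4^j}$ at distance $\sim2^jr_B$ dominates the polynomial volume factor $\sim2^{jn/s_0}$ from $|2^{j+1}B|^{1/s_0}/|B|^{1/q_0}$ (the powers of $r_B$ cancel since $e^{-kr_B^2L}$ has $L^{s_0}\to L^{q_0}$ norm $\ls r_B^{-n(1/s_0-1/q_0)}$), so one may take $g_2(j)=C_{(m)}e^{-c4^j}$ after slightly decreasing $c$; the absence of a $t$-integral here is what makes the decay purely Gaussian.

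The only delicate point is the first inequality: one must verify that $e^{-tL}(I-e^{-r_B^2L})^m$ genuinely carries the gain $\min\{1,(r_B^2/t)^m\}$ uniformly in $B$, and then keep precise track, through the $t$-integration and the passage to normalized averages, of the exact exponents of $2^j$ and $r_B$. Choosing $m$ large enough that $2m-n/s_0>0$ (in fact large enough that the series arising when the estimate is fed into the extrapolation machinery converge) is then automatic, and the rest is a routine combination of the off-diagonal bounds with H\"older's inequality and the annular decomposition.
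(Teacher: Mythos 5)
The paper itself gives no proof of Lemma \ref{l5.3}; it is quoted from \cite[Lemma 3.2]{am08}, so your plan of reproducing the Auscher--Martell argument is exactly what the authors rely on. Your treatment of the annuli $j\ge2$ (off-diagonal $L^{p_0}$--$L^{s_0}$ bounds for $e^{-tL}$, the gain $\min\{1,(r_B^2/t)^m\}$ from the binomial expansion of $(I-e^{-r_B^2L})^m$, and the bookkeeping giving $2^{-2mj}\cdot2^{jn/s_0}$ together with the factor $(2^{j+1}r_B)^{\az}$) is correct, as is the second inequality, where commuting $L^{-\az/2}$ with $e^{-kr_B^2L}$ and applying the $L^{s_0}$--$L^{q_0}$ off-diagonal estimate at time $kr_B^2$, $1\le k\le m$, lets the Gaussian factor absorb $2^{jn/s_0}$ while the powers of $r_B$ cancel.

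There is, however, one step that fails as written: the local term $j=1$. Since $\dist(B,4B)=0$, there is no Gaussian factor, and your proposed bound for the composed family reduces the $t$-integral to $\int_0^{\fz}\min\{1,(r_B^2/t)^m\}\,\frac{dt}{t}$ (after the cancellation $t^{\az/2}\cdot t^{-\az/2}$), which diverges logarithmically as $t\to0$ because $(I-e^{-r_B^2L})^m$ gives no gain for $t\ls r_B^2$ (its $k=0$ binomial term is the identity). The resolution in \cite{am08} is to handle the local piece not by the subordination formula but by the global unweighted boundedness of $L^{-\az/2}$ from $L^{p_0}(\rn)$ to $L^{s_0}(\rn)$, valid precisely because $p_-(L)<p_0<s_0<p_+(L)$ and $\frac{1}{p_0}-\frac{1}{s_0}=\frac{\az}{n}$, combined with the uniform $L^{p_0}$-boundedness of $(I-e^{-r_B^2L})^m$; this produces the $j=1$ term $r_B^{\az}\bigl(\fint_{4B}|f|^{p_0}\,dx\bigr)^{1/p_0}$. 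That diagonal mapping property of $L^{-\az/2}$ is a genuine extra ingredient, it cannot be recovered by naively integrating the semigroup bounds for the same divergence reason, so it must be stated and invoked explicitly; once it is, the rest of your sketch goes through.
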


Next, we prove Theorem \ref{t2.7} via using Lemma \ref{l5.3} and Theorem \ref{t1.3}.

\begin{proof}[Proof of Theorem \ref{t2.7}]
By the assumption $v^{1-(\frac{p}{p_-(L)})'}\in A_\fz(\rn)$, we conclude that there exists an $r_0\in(1,\fz)$
such that $v^{1-(\frac{p}{p_-(L)})'}\in RH_{r_0}(\rn)$. Let $p_0\in(p_-(L),p_+(L))$ be such that
$[1-(\frac{p}{p_-(L)})']r_0=1-(\frac{p}{p_0})'$ and $\gz:=\frac{\az p_0}{n}$. From this and the assumption \eqref{2.5},
we deduce that, for any ball $B$ of $\rn$,
\begin{align*}
&\lf[\fint_B \omega\,dx\r]^{\frac{p_0}{q}}
\lf[\fint_B v^{1-(\frac{p}{p_0})'}\,dx\r]^{\frac{1}{(\frac{p}{p_0})'}}\\
&\quad=\lf[\fint_B \omega\,dx\r]^{\frac{p_0}{q}}
\lf[\fint_B v^{[1-(\frac{p}{p_{(L)}})']r_0}\,dx\r]^{\frac{1}{r_0}\frac{1}{(\frac{q}{p_-(L)})'}\frac{p_0}{p_-(L)}}\\ \nonumber
&\quad\ls\lf\{\lf[\fint_B \omega\,dx\r]^{\frac{p_-{(L)}}{q}}\lf[\fint_B v^{1-(\frac{p}{p_-{(L)}})'}\,dx\r]^{\frac{1}{(\frac{p}{p_-{(L)}})'}}
\r\}^{\frac{p_0}{p_-(L)}}
\ls\lf[\omega,v^{1-(\frac{p}{p_-(L)})'}\r]_{A_{\frac{p_-(L)}{q}(1-\frac{\az p_-(L)}{n})}(\rn)}^{\frac{p_0}{q}},
\end{align*}
which further implies that
\begin{align*}
\lf[\omega,v^{1-(\frac{p}{p_0})'}\r]_{A_{\frac{q}{p_0}(1-\gz)}(\rn)}&=
\sup_{B\subset\rn}\lf[\fint_B \omega\,dx\r]
\lf[\fint_B v^{1-(\frac{p}{p_0})'}\,dx\r]^{\frac{q}{p_0}(1-\gz)-1}\\
&\ls\lf[\omega,v^{1-(\frac{p}{p_-(L)})'}\r]_{A_{\frac{p_-(L)}{q}(1-\frac{\az p_-(L)}{n})}(\rn)}<\fz.
\end{align*}
Moreover, by the assumption $\omega\in RH_s(\rn)$ and $s>(\frac{q_+(L)}{q})'$, we know that there exists a $q_0\in(q,q_+(L))$
such that $\omega\in RH_s(\rn)$ and $s>(\frac{q_0}{q})'$. Let $s_0\in(p_0,p_+(L))$ satisfy $\frac{1}{p_0}-\frac{1}{s_0}=
\frac{\az}{n}$.

Take $m\in\nn$ large enough such that $\sum_{j=1}^\fz g_1(j)\ls1$ and $\sum_{j=1}^\fz g_2(j)\ls1$,
where, for any $j\in\nn$, $g_1(j)$ and $g_2(j)$ are as in Lemma \ref{l5.3}. Assume that $B:=B(x_B,r_B)$,
with $x_B\in\rn$ and $r_B\in(0,\fz)$, is a ball of $\rn$ and $f\in L^\fz_{\rm c}(\rn)$.
Let $F:=L^{-\az/2}(f)$, $F_B:=L^{-\az/2}(I-e^{-r_B^2}L)^m(f)$,
and $R_B:=L^{-\az/2}[I-(I-e^{-r_B^2}L)^m](f)$. Then $|F|\le |F_B|+|R_B|$ on $B$. From Lemma \ref{l5.3},
it follows that, for any $x_1,\,x_2\in B$,
\begin{align}\label{5.3}
\lf(\fint_B|F_B|^{s_0}\,dx\r)^{\frac{1}{s_0}}\ls
\sum_{j=1}^\fz g_1(j)(2^{j+1}r_B)^{\az}\lf(\fint_{2^{j+1}B}|f|^{p_0}\,dx\r)^{\frac{1}{p_0}}
\ls\lf[\cm_{\az p_0/n}(|f|^{p_0})(x_1)\r]^{\frac{1}{p_0}}
\end{align}
and
\begin{align}\label{5.4}
\lf(\fint_B|R_B|^{q_0}\,dx\r)^{\frac{1}{q_0}}&\ls\sum_{k=1}^m
\lf[\fint_B\lf|L^{-\az/2}\lf(e^{-kr_B^2L}(f)\r)\r|^{q_0}\,dx\r]^{\frac{1}{q_0}}\\
&\ls\sum_{j=1}^\fz g_2(j)\lf[\fint_{2^{j+1}B}\lf|L^{-\az/2}(f)\r|^{s_0}\,dx\r]^{\frac{1}{s_0}}\nonumber\\
&\ls\sum_{j=1}^\fz g_2(j)\lf[\cm\lf(\lf|L^{-\az/2}(f)\r|^{s_0}\r)(x_2)\r]^{\frac{1}{s_0}}
\ls\lf[\cm(|F|^{s_0})(x_2)\r]^{\frac{1}{s_0}}.\nonumber
\end{align}
By \eqref{5.3} and \eqref{5.4}, we find that \eqref{1.4} and \eqref{1.5} hold true
for $p_1:=s_0$, $p_2:=p_0$, $p_3:=q_0$, $\gz:=\az p_0/n$, and $\epsilon:=0$. Thus, applying Theorem \ref{t1.3},
we then complete the proof of Theorem \ref{t2.7}.
\end{proof}

\smallskip

\noindent{\textbf{Acknowledgements}}\quad Zhenyu Yang would like to thank Professor Wen Yuan
for his guidance and encouragements, and some helpful discussions on the topic of this article.

\bigskip

\noindent Sibei Yang (Corresponding author)

\medskip

\noindent School of Mathematics and Statistics, Gansu Key Laboratory of Applied Mathematics
and Complex Systems, Lanzhou University, Lanzhou 730000, People's Republic of China

\smallskip

\noindent{\it E-mail:} \texttt{yangsb@lzu.edu.cn}

\bigskip

\noindent Zhenyu Yang

\medskip

\noindent Laboratory of Mathematics and Complex Systems (Ministry of Education of China),
School of Mathematical Sciences, Beijing Normal University, Beijing 100875,
People's Republic of China

\smallskip

\smallskip

\noindent{\it E-mail:} \texttt{zhenyuyang@mail.bnu.edu.cn}

\end{document}